\documentclass[12pt,a4paper]{amsart}
\usepackage{a4wide}
\usepackage{amsmath, amssymb, amsfonts,enumerate}
\usepackage[all]{xy}
\usepackage{amscd}
\usepackage{comment}
\usepackage{mathtools}

\newtheorem{theorem}{Theorem}[section]
\newtheorem{lemma}[theorem]{Lemma}
\newtheorem{corollary}[theorem]{Corollary}
\newtheorem{proposition}[theorem]{Proposition}

 \theoremstyle{definition}
 \newtheorem{definition}[theorem]{Definition}
 \newtheorem{remark}[theorem]{Remark}

 \newtheorem{example}[theorem]{Example}
\newtheorem{examples}[theorem]{Examples}

\numberwithin{equation}{section}
\newcommand {\N}{\mathbb{N}} 
\newcommand {\Z}{\mathbb{Z}} 
\newcommand {\R}{\mathbb{R}} 


\newcommand{\BB}{\mathcal{B}}
\newcommand{\CC}{\mathcal{C}}

\newcommand{\FF}{\mathcal{F}}

\newcommand{\PP}{\mathcal{P}}

\newcommand{\UU}{\mathcal{U}}

\newcommand{\PPf}{\mathcal{P}_{\text{fin}}}

\DeclareMathOperator{\sep}{sep}
\DeclareMathOperator{\spa}{span}
\DeclareMathOperator{\cov}{cov}

\DeclareMathOperator{\htop}{h_{\scriptstyle  \text{\rm top}}}

\DeclareMathOperator{\hsep}{h_{\scriptstyle \text{\rm sep}}}
\DeclareMathOperator{\hspa}{h_{\scriptstyle  \text{\rm spa}}}
\DeclareMathOperator{\hcov}{h_{\scriptstyle  \text{\rm cov}}}

 \begin{document}
\title[Expansive actions with specification]{Expansive actions with specification on uniform spaces, topological entropy, and the Myhill property}
\author{Tullio Ceccherini-Silberstein}
\address{Dipartimento di Ingegneria, Universit\`a del Sannio, C.so
Garibaldi 107, 82100 Benevento, Italy}
\email{tullio.cs@sbai.uniroma1.it}
\author{Michel Coornaert}
\address{Universit\'e de Strasbourg, CNRS, IRMA UMR 7501, F-67000 Strasbourg, France}
\email{michel.coornaert@math.unistra.fr}
\subjclass[2010]{54E15, 37B05, 37B10, 37B15, 37B40, 43A07, 68Q80}
\keywords{uniform  space, uniformly continuous action, expansiveness, specification, amenable group, entropy, homoclinicity, strongly irreducible subshift, cellular automaton}
\begin{abstract}
We prove that every expansive  continuous action with the weak specification property
of an amenable group $G$ on a compact Hausdorff space $X$ has the Myhill property, i.e.,
every pre-injective continuous self-mapping of $X$  commuting with the action of $G$ on $X$  
is surjective.
This extends a result previously obtained by Hanfeng Li in the case when $X$ is metrizable.   
\end{abstract}
\date{\today}
\maketitle

\tableofcontents

\section{Introduction}

A \emph{topological dynamical system} is a pair $(X,G)$, where $X$ is a topological space and $G$ is a group  acting continuously on $X$.
Analogously,  a \emph{uniform dynamical system} is a pair $(X,G)$,
where $X$ is a uniform space
and $G$ is a group acting uniformly continuously on $X$.
Every uniform dynamical system may be regarded as a topological dynamical system.
Indeed, every uniform space has an underlying topology and every uniformly continuous self-mapping 
of a uniform space is continuous with respect to this topology.
In the other direction, every topological dynamical system $(X,G)$ with $X$ compact Hausdorff can be regarded as a uniform dynamical system.
Indeed, a compact Hausdorff space admits a unique uniform structure compatible with its topology and  every continuous self-mapping of the space is uniformly continuous with respect to this uniform structure. 
\par
An \emph{endomorphism} of a topological dynamical system $(X,G)$ is a continuous $G$-equivariant self-mapping
of $X$,  that is, a continuous map $f \colon X \to X$ such that 
$f(g x) = g f(x)$ for all $g \in G$ and $x \in X$.
One says that the topological dynamical system $(X,G)$ is \emph{surjunctive} if every injective endomorphism of 
$(X,G)$ is surjective.
The term ``surjunctive" was created by Gottschalk~\cite{gottschalk} in the early 1970s
and the search for  conditions  guaranteeing surjunctivity of certain classes of  dynamical systems has attracted much interest, especially in the last two decades  since the seminal work of Gromov~\cite{gromov-esav}.
  \par
Let $(X,G)$ be a uniform dynamical system.
Two points $x,y \in X$ are called \emph{homoclinic} if their  orbits are asymptotically close, i.e., for every entourage $U$ of $X$, there exists a finite subset $\Omega \subset G$ such that 
$(g x, g y) \in U$ for all $g \in G \setminus  \Omega$.
Homoclinicity is an equivalence relation on $X$.
An endomorphism of $(X,G)$  is called \emph{pre-injective} if its restriction to each homoclinicity class is injective.
\par
One says that a uniform dynamical system $(X,G)$ satisfies the \emph{Myhill property} if every 
pre-injective endomorphism of 
$(X,G)$ is surjective.
As injectivity  clearly implies pre-injectivity,
a uniform dynamical system  is surjunctive whenever it has the Myhill property.
\par
One says that a uniform dynamical system $(X,G)$ is \emph{expansive} if there is an entourage $U_0$ of $X$ such that
there is no pair $(x,y) \in X \times X$ with $x \not= y$ satisfying  $(g x, g y) \in U_0$ for all $g \in G$.
Given any set $A$, the full $G$-shift with alphabet $A$,
i.e., the system $(A^G,G)$, where $A^G$ is equipped with its uniform prodiscrete structure and $G$ acts on $A^G$ by the $G$-shift 
(see Formula~\eqref{e:shift-action}),
yields an example of an expansive uniform dynamical system.
\par
One says that the uniform dynamical system $(X,G)$ has the \emph{weak specification property} if
for every entourage $U$ of $X$, there is a finite subset $\Lambda = \Lambda(U) \subset G$
satisfying the following condition:
for every finite sequence $\Omega_1, \dots, \Omega_n$ of finite subsets of $G$ such that
$\Omega_j \cap \Lambda \Omega_k = \varnothing$ for all distinct $j,k \in \{1,\dots,n\}$ and every sequence $x_1,\dots,x_n$ of points of $X$,
there exists a point $x \in X$
such that $(g x, g x_i) \in U$ for all $g \in \Omega_i$ and $1 \leq i \leq n$.
\par
The main goal of the present paper is to  establish the following result.

\begin{theorem} 
\label{t:myhill}
Let $X$ be a compact Hausdorff space equipped with a continuous action of an amenable group $G$.
Suppose that  $(X,G)$ is expansive and has the weak specification property.
Then  $(X,G)$ has the Myhill property.
In particular,  $(X,G)$ is surjunctive.
 \end{theorem}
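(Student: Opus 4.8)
The plan is to run an entropy argument modelled on the Moore--Myhill Garden of Eden theorem, transporting the combinatorics available for strongly irreducible subshifts over amenable groups to the present coordinate-free setting: \emph{expansiveness} plays the role of ``the system is determined by a single generating entourage (finite memory)'' and the \emph{weak specification property} plays the role of ``strong irreducibility (free gluing)''. Since $X$ is compact Hausdorff, the $G$-action is by homeomorphisms and uniformly continuous for the unique compatible uniformity, so all the notions in the statement make sense. First I would develop topological entropy $\htop(X)=\htop(X,G)$ along a left F{\o}lner net for $G$, using finite open covers or, equivalently, finite entourages $U$ and the quantities $s_X(F,U)$ (maximal cardinality of a $(F,U)$-separated subset of $X$). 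Standard amenable entropy theory then gives: $\htop$ is independent of the F{\o}lner net, is non-increasing under factor maps and under passing to closed $G$-invariant subsystems, and --- using expansiveness --- is computed from one generating entourage, $\htop(X)=\limsup_i\frac{1}{|F_i|}\log s_X(F_i,U_0)$ for any entourage $U_0$ inside a fixed expansive (closed) one. I would also record two elementary facts: \emph{(i)} by compactness, for each entourage $V$ the number of ``$(E,V)$-types'' of points of $X$ over a finite $E\subseteq G$ is at most $\kappa(V)^{|E|}$ with $\kappa(V)<\infty$, so a ``collar'' of vanishing F{\o}lner density contributes only a subexponential factor; and \emph{(ii)} the \emph{expansiveness reduction}: for the expansive entourage $U_0$ and every entourage $V$ there is a finite $\Omega_V\subseteq G$ such that $(gx,gx')\in U_0$ for all $g\in\Omega_V h$ implies $(hx,hx')\in V$; consequently, if $x,x'$ satisfy $(gx,gx')\in U_0$ for all $g$ outside some finite $F\subseteq G$, then $x$ and $x'$ are homoclinic.

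The argument then rests on two lemmas. \emph{Lemma A (proper subsystems lose entropy): if $(X,G)$ is expansive with weak specification and $Y\subsetneq X$ is closed and $G$-invariant, then $\htop(Y)<\htop(X)$.} To prove this, fix $x_0\in X\setminus Y$ and, since $Y$ is closed, an entourage $V$ with $V[x_0]\cap Y=\varnothing$; choose a chain $V=V_0\supseteq V_1\supseteq V_2$ of symmetric entourages with $V_{k+1}\circ V_{k+1}\subseteq V_k$ and let $\Lambda=\Lambda(V_2)$ be the weak-specification constant. For a large F{\o}lner set $F_i$, use amenability (quasi-tiling) to select a positive-density family of pairwise ``$\Lambda$-separated'' finite blocks inside $F_i$, together with a large ``background'' block $\Lambda$-separated from all of them. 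Then for each point $z$ of $Y$ and each subset $S$ of the indexing set of the blocks, weak specification (for $V_2$) furnishes a point $x(z,S)\in X$ that tracks $z$ on the background and on the blocks indexed outside $S$, and tracks the orbit of $x_0$ on the blocks indexed by $S$; the choice of the $V_k$ and the fact that no point of $Y$ is ever $V$-close to $x_0$ make the points $x(z,S)$, with $z$ running over a maximal $(F_i,V)$-separated subset of $Y$ and $S$ over all subsets of the index set, pairwise $(F_i,V_2)$-separated in $X$. Since the number of blocks is a positive fraction of $|F_i|$, this forces $\htop(X)\ge\htop(Y)+\eta$ for some $\eta>0$. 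The delicate point is the bookkeeping that keeps the background count comparable to $s_Y(F_i,V)$ while the blocks still yield a genuine exponential bonus; this is where the quasi-tiling machinery for amenable groups is needed.

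\emph{Lemma B (pre-injective endomorphisms preserve entropy): if $f$ is a pre-injective endomorphism of $(X,G)$, then $\htop(f(X))=\htop(X)$.} Put $Y:=f(X)$; it is closed ($X$ compact Hausdorff, $f$ continuous) and $G$-invariant, so $\htop(Y)\le\htop(X)$ automatically and the content is $\htop(X)\le\htop(Y)$. The mechanism: by the expansiveness reduction, if $x,x'\in X$ satisfy $(gx,gx')\in U_0$ for all $g$ outside a finite set, then $x,x'$ are homoclinic, hence $f(x)=f(x')$ together with pre-injectivity of $f$ forces $x=x'$. Fix a small generating entourage $V$, a F{\o}lner net $(F_i)$, and finite enlargements $F_i\subseteq F_i',F_i''$ (with $F_i\subseteq F_i''$) dictated by $U_0$, $V$, the uniform-continuity modulus of $f$ and the specification constant. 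Using weak specification to realize the required ``partial configurations'' and applying the previous sentence, one establishes an inequality of the form $s_X(F_i',V)\le\kappa^{\,|F_i'\setminus F_i|}\cdot s_Y(F_i'',V)$: the point is that two points of $X$ whose $f$-images agree closely over $F_i''$ and whose behaviour agrees closely on the collar $F_i'\setminus F_i$ are close to being homoclinic-after-extension and hence, by pre-injectivity, close to each other, so a maximal $(F_i',V)$-separated subset of $X$ cannot exceed $\kappa^{|F_i'\setminus F_i|}$ times the size of a separated subset of $Y$. By \emph{(i)} the collar contributes $\kappa^{|F_i'\setminus F_i|}=e^{o(|F_i|)}$, while $|F_i'|,|F_i''|=(1+o(1))|F_i|$; taking $\limsup_i\frac{1}{|F_i|}\log(\cdot)$ yields $\htop(X)\le\htop(Y)$, hence equality.

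Granting the two lemmas the theorem is immediate: if $f$ were a pre-injective endomorphism of $(X,G)$ that is not surjective, then $Y=f(X)$ is a proper closed $G$-invariant subset of $X$, so $\htop(Y)<\htop(X)$ by Lemma A, contradicting $\htop(Y)=\htop(X)$ from Lemma B; thus every pre-injective endomorphism is surjective, i.e.\ $(X,G)$ has the Myhill property, and surjunctivity follows since injective endomorphisms are pre-injective. I expect the principal obstacle to be Lemma B --- concretely, the simultaneous handling of (a) upgrading ``$U_0$-closeness off a finite set'' to genuine homoclinicity so that pre-injectivity applies (via the expansiveness reduction), and (b) producing the needed partial configurations from weak specification, which only grants $U$-closeness over finitely many $\Lambda$-separated windows rather than the literal agreement available for subshifts --- carried out so that the entourages and finite enlargements match and the collar keeps vanishing F{\o}lner density; Lemma A's counting (the positive-density bonus surviving the bookkeeping) is the other substantial point. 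Non-metrizability is not an essential extra difficulty: everything is phrased through finite entourages and finite open covers rather than metrics, and compactness supplies the finiteness statements used throughout (the bound $\kappa(V)<\infty$, the expansiveness reduction, and closedness of $f(X)$).
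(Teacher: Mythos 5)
Your proposal follows essentially the same route as the paper: the same two-lemma decomposition (a proper closed $G$-invariant subsystem has strictly smaller entropy; a pre-injective continuous $G$-equivariant map cannot decrease entropy, proved by grafting a fixed base point via specification to force genuine homoclinicity and then pigeonholing on the F\o lner collar $\Lambda F_j\setminus F_j$), combined in exactly the same way to get the Myhill property. The delicate points you flag are resolved in the paper along the lines you anticipate: a maximal $\Lambda$-separated subset of $F_j$ (no quasi-tiling machinery is needed) supplies the positive-density family of "blocks" (here, singletons), and the bookkeeping that keeps the background count comparable to $\sep(Y,G,F_j,V\circ V)$ is handled by working with $(F_j\setminus\Lambda A,V)$-separated subsets of $Y$ and a spanning-set product bound, rather than with a single maximal $(F_j,V)$-separated set as literally written in your sketch.
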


Theorem~\ref{t:myhill} has been previously established by Li \cite[Theorem~1.1]{li-goe} 
under the additional hypotheses that $X$ is metrizable and $G$ is countable.
By virtue of the Bryant-Eisenberg theorem,
 if a compact Hausdorff space $X$ admits an expansive continuous action of a countable group $G$,
then $X$ is necessarily metrizable
(see Theorem~\ref{t:expansive-metrizable} below).
Therefore Theorem~\ref{t:myhill}
reduces to Theorem~1.1  in~\cite{li-goe}  when  the group $G$ is countable. 
However, for any uncountable   group $G$,
there exist expansive topological dynamical systems with the weak specification property $(X,G)$,
with $X$  compact Hausdorff but   non-metrizable
(e.g.~the full shift $(A^G,G)$
with  $A$ a finite set with more than one element).
 \par
We cannot drop the weak specification hypothesis in Theorem~\ref{t:myhill}.
Indeed, consider a discrete space with  two distinct points  $X = \{x_1,x_2\}$
 and a   group $G$ fixing each point of $X$.
Observe that the space $X$ is compact metrizable and $(X,G)$ is expansive.
Moreover, each homoclinicity class is reduced to a single point so that 
 the endomorphism  $f \colon X \to X$ given by $f(x_1) = f(x_2) = x_1$ is  pre-injective.
As $f$  is not surjective,
we deduce   that $(X,G)$ fails to have the Myhill property.
Note however that this dynamical system  is surjunctive since $X$ is finite.
A non-surjunctive  example is provided
by the subshift  $(X,\Z)$, where $X \subset \{0,1\}^{\Z}$ consists of all bi-infinite sequences of $0$s and $1$s  containing at most
one  chain of $1$s.
Here also  $X$ is compact metrizable and $(X,\Z)$ is expansive.
However, the map $f \colon X \to X$ which replaces  the word $1 0$ if it appears in a configuration 
$x \in X$
by the word $11$
 is an injective endomorphism of $(X,\Z)$.
This endomorphism  is not surjective since
any configuration with exactly one occurrence of the symbol $1$ 
cannot be in the image of $f$. Therefore $(X,\Z)$ is not surjunctive.
\par
The expansiveness assumption cannot be either dropped
in Theorem~\ref{t:myhill}.  
Indeed, take any  topological dynamical system
$(X,G)$ with the weak specification property and $X$ compact Hausdorff with more than one point
(e.g. the $G$-shift on $\{0,1\}^G$).
Consider the topological dynamical system $(X^\N,G)$,
where $X^\N= \prod_{n \in \N} X$ is equipped with the product topology and  $G$ acts diagonally on $X^\N$. 
Then $X^\N$ is compact Hausdorff and the topological dynamical system
$(X^\N, G)$ has the weak specification property (cf.\ Proposition~\ref{p:WSP-produit}).
However,  $(X^\N,G)$ is not surjunctive
(and hence does not have the Myhill property)
since the map $f \colon X^\N \to X^\N$ defined by $f(x)(0) = x(0)$ and $f(x)(n) = x(n-1)$ for all 
$n \geq 1$,
is an injective but not surjective endomorphism of $(X^\N,G)$.
\par  
It follows from a result of Bartholdi~\cite{bartholdi-duality}
that, for any non-amenable group $G$,
there exists a finite set $A$ such that the full $G$-shift $(A^G,G)$ does not have  the Myhill property.
This shows in particular that   the amenability assumption cannot be removed from Theorem~\ref{t:myhill}.
 \par  
A subshift $X \subset A^G$ has the weak specification property for the $G$-shift if and only if $X$ is strongly irreducible
 (see Proposition~\ref{p:wsp-strong-irred} below).
Therefore, Theorem~\ref{t:myhill} yields the following.

\begin{corollary}
\label{c:strongly-myhill}
Let $A$ be a finite set and let $G$ be an amenable group.
Let $X \subset A^G$ be a strongly irreducible subshift.
Equip $X$ with the shift action of $G$ and the topology induced by the prodiscrete topology on $A^G$.
Then  $(X,G)$ has the Myhill property.
In particular, $(X,G)$ is surjunctive.
\qed
\end{corollary}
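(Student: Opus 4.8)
The plan is to deduce this statement directly from Theorem~\ref{t:myhill}, the work consisting only in checking that the dynamical system $(X,G)$ meets all of its hypotheses. First I would note that, since $A$ is finite, it is a compact Hausdorff space in its discrete topology, so by Tychonoff's theorem the full shift $A^G$, endowed with the prodiscrete (equivalently, product) topology, is compact Hausdorff. A subshift is, by definition, a closed and $G$-invariant subset of $A^G$; hence $X$ is a closed subspace of a compact Hausdorff space, so $X$ is itself compact Hausdorff, and the restriction to $X$ of the continuous shift action of $G$ on $A^G$ is a continuous action of $G$ on $X$.

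Next I would verify expansiveness. The full shift $(A^G,G)$ is expansive: take $U_0$ to be the entourage consisting of all pairs $(x,y) \in A^G \times A^G$ whose components agree at the identity element of $G$. If $(gx, gy) \in U_0$ for every $g \in G$, then, unwinding the definition of the shift action, $x$ and $y$ agree at every element of $G$, that is, $x = y$; thus $U_0$ witnesses expansiveness. Since $X$ is $G$-invariant, the trace $U_0 \cap (X \times X)$ is an entourage of $X$ enjoying the same separating property, so $(X,G)$ is expansive as well. More generally, expansiveness is plainly inherited by every subsystem.

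The remaining hypothesis, the weak specification property, is precisely where the assumption that $X$ is strongly irreducible is used: by Proposition~\ref{p:wsp-strong-irred}, a subshift of $A^G$ has the weak specification property for the $G$-shift if and only if it is strongly irreducible, so $(X,G)$ has the weak specification property. As $G$ is amenable by hypothesis, Theorem~\ref{t:myhill} now applies and gives that $(X,G)$ has the Myhill property; and since the Myhill property implies surjunctivity (injectivity being stronger than pre-injectivity, as recalled above), $(X,G)$ is in particular surjunctive.

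All the genuine content is thus packaged into Theorem~\ref{t:myhill} and Proposition~\ref{p:wsp-strong-irred}; once those are available, no real obstacle remains. If any point requires a little care, it is the passage from the abstract dynamical notions to the concrete setting, namely checking that expansiveness and the weak specification property are indeed inherited by closed $G$-invariant subsystems, but in both cases this is immediate from the definitions.
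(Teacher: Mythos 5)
Your proof is correct and follows exactly the route the paper intends: the paper derives this corollary by the one-line observation that weak specification for a subshift is equivalent to strong irreducibility (Proposition~\ref{p:wsp-strong-irred}), combined with the facts that a subshift over a finite alphabet is compact Hausdorff and that subshifts are expansive (Example~\ref{ex:subshifts-expansive}), so that Theorem~\ref{t:myhill} applies directly. Your verification of each hypothesis is accurate and complete; nothing further is needed.
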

\par
In the case when $X$ is the full shift $A^G$ and $G = \Z^d$ (the free abelian group of finite rank $d$), Corollary~\ref{c:strongly-myhill} is due to  Myhill in~\cite{myhill}.
Myhill's result  was subsequently extended to  full shifts over finitely generated amenable groups
by Mach{\`{\i}},  Scarabotti,and the first author in~\cite{ceccherini}
and then to  strongly irreducible subshifts of finite type over finitely generated amenable groups
 by Fiorenzi in~\cite{fiorenzi-strongly}.
Finally, Corollary~\ref{c:strongly-myhill} was established in its full generality by the authors 
in~\cite{csc-myhill-monatsh}.
\par
The paper is organized as follows.
In Section~\ref{sec:background-material}, we introduce notation and review some background material
on uniform spaces, ultrauniform spaces, group actions, shift spaces, and amenable groups.
Section~\ref{sec:expansiveness} investigates expansiveness for uniform dynamical systems.
We prove the Bryant-Eisenberg theorem stating that any compact Hausdorff space admitting an  expansive continuous action of a countable group is necessarily metrizable (see Theorem~\ref{t:expansive-metrizable}).
Section~\ref{sec:top-entropy} 
is devoted to the definition and basic properties of the topological entropy $\htop(X,G)$ 
of a topological dynamical system $(X,G)$ with $G$ amenable.
In Section~\ref{sec:homoclinicity}, we introduce the notion of homoclinicity for 
uniform dynamical systems. We prove that two configurations in a shift space are homoclinic if and only if they coincide outside of a finite subset of the underlying group
(Proposition~\ref{p:homoclinicity-almost-equality}).
 In Section~\ref{sec:weak-specification}, we study weak  specification 
 for uniform dynamical systems.
Our definition of weak specification coincides with the one  in~\cite{chung-li} and~\cite{li-goe}
in the compact metrizable case.
Weak specification for subshifts with discrete alphabet is equivalent to strong irreducibility
(see Proposition~\ref{p:wsp-strong-irred}).
We also show that  if $X$ is a compact Hausdorff space with more than one point equipped with a continuous action with the weak specification property of an amenable group $G$,
then $\htop(X,G) > 0$ (see Theorem~\ref{t:entropie-positive}). 
The proof of Theorem~\ref{t:myhill} is given in Section~\ref{sec:proof-main-result} and is divided into two parts, which are of independent interest. 
We first show (Theorem~\ref{t:entropie-sousespace-propre}) that if $X$ is a compact Hausdorff space equipped with a continuous action with the weak specification property of an amenable group $G$ and $Y \subsetneqq X$ is a proper closed $G$-invariant subset  such that the action of $G$ restricted to  $Y$ is expansive, then
  $\htop(Y,G) < \htop(X,G)$. 
Next we establish   Theorem~\ref{t:entropie-diminue-failure-preinjectivite} which says that if $X$ and $Y$ are compact Hausdorff spaces equipped with expansive continuous actions of an amenable group $G$ such that the action of $G$ on $X$ has the weak specification property and $\htop(Y,G) < \htop(X,G)$, then any continuous $G$-equivariant map $f \colon X \to Y$ fails 
  to be  pre-injective. These two results combined together immediately yield Theorem~\ref{t:myhill}.\\
	
	\noindent
	{\bf Acknowledgments.} We express our deepest gratitude to the anonymous referee for the most careful reading of our manuscript and the numerous suggestions and remarks.

\section{Background material}
\label{sec:background-material}

\subsection{General notation}
Given sets $A$ and $B$, we write $A \subset B$ if every element in $A$ is also in $B$.
We denote by $A^B$ the set consisting of all maps $x \colon B \to A$.
When $A$ is a finite set, we write $|A|$ its cardinality.

\subsection{Subsets of $X \times X$}
 Let $X$ be a set.
 \par 
 We denote by $\Delta_X$ the \emph{diagonal} of $X$, that is,
 the subset $\Delta_X \subset X \times X$ consisting of all pairs $(x,x)$ with $x \in X$.
 \par
  Let $U \subset X \times X$.
Given a point $x \in X$, we denote by $U[x]$ the subset of $X$ consisting of all points $y \in X$ such that $(x,y) \in U$.
\par
One says  that $U$ is \emph{reflexive} if $\Delta_X \subset U$.
 \par
The \emph{inverse}  of $U$ is the subset  $U^* \subset X \times X$ 
consisting of all pairs $(x,y)$ such that 
$(y,x) \in U$.
One says that  $U$ is \emph{symmetric} if $U^* = U$.
\par
The \emph{composite}  of $U$ with another subset  $ V \subset X \times X$  is
the subset $U \circ V \subset X \times X$ consisting of all pairs
$(x,y) \in X \times X$ such that
 there exists   $z \in X$ with  $(x,z) \in V$ and $ (z,y) \in U$.
One says that  $U$ is \emph{transitive} if $U \circ U \subset U$.
\par
Observe that the set consisting of all subsets of $X \times X$ is a monoid for $\circ$ with identity element $\Delta_X$ and that  the map $U \mapsto U^*$ is an anti-involution of this monoid.
This  monoid is ordered for inclusion in the sense that
$U \circ V \subset U' \circ V'$ whenever $U,V,U',V' \subset X \times X$ satisfy $U \subset U'$ and $V \subset V'$.
This implies in particular that
$U \subset U \circ V$ if $\Delta_X \subset V$ and that
$V \subset U \circ V$ if $\Delta_X \subset U$. 
\par
Note also that a relation $\sim$ on $X$ is reflexive (resp.~symmetric, resp.~transitive)
 if and only if
its graph $\Gamma(\sim) \coloneqq \{(x,y) : x \sim y \}$ is a reflexive
(resp.~symmetric, resp.~transitive) subset of $X \times X$.

\subsection{Uniform spaces}
The theory of uniform spaces we briefly review here was introduced by Andr\'e Weil 
in~\cite{weil-uniforme}.
 The reader is referred 
to the monographs~\cite[Ch.~2]{bourbaki-top-gen}, \cite{james}, and \cite[Ch. 6]{kelley}
for a more detailed exposition.
\par
Let $X$ be a set. A \emph{uniform structure} on $X$ is a nonempty set $\UU$ of subsets of $X \times X$, whose elements are called the \emph{entourages} of $X$,  satisfying the following conditions:
\begin{enumerate}[(UN{I}-1)]
\item every  $U \in \UU$ is reflexive;
\item if $U \in \UU$ and $U \subset V \subset X \times X$, then $V \in \UU$;
\item if $U \in \UU$ and $V \in \UU$, then $U \cap V \in \UU$;
\item if $U \in \UU$, then $U^* \in \UU$;
\item if $U \in \UU$, then there exists $V \in \UU$ such that 
$V \circ V \subset U$.
\end{enumerate}

\begin{remark}
\label{rem:sym-entourage}
It immediately follows from conditions (UNI-3), (UNI-4), and (UNI-5) that, given any entourage $U \in \UU$, there exists a symmetric entourage $V \in \UU$ such that $V \circ V \subset U$.
\end{remark}Let $X$ be a set and let   $U \subset X \times X$. 

A set equipped with an uniform structure  is called a \emph{uniform space}.
\par
If $X$ is a uniform space and $Y \subset X$, 
then the uniform structure on $X$ naturally induces a uniform structure on $Y$.
The entourages of the uniform structure induced by $X$ on $Y$ are the sets of the form 
$U \cap (Y \times Y)$, where $U$ runs over all entourages of $X$.
\par
A subset $\BB$ of a uniform structure  $\UU$ on a set $X$ is called a \emph{base of entourages}  if for every entourage 
$U \in \UU$,
there exists an entourage $V \in \BB$ such that $V \subset U$.
\par
The \emph{discrete uniform structure} on a set $X$ is the uniform structure on $X$ whose entourages are all
the reflexive subsets of $X \times X$.
\par
If $X$ is a uniform space,
there is an induced topology on $X$ characterized by the fact that the neighborhoods of an arbitrary point $x \in X$ consist of the sets $U[x]$, where $U$ runs over all entourages of $X$.
This topology is Hausdorff if and only if the intersection of all the entourages of $X$ is reduced to the diagonal $\Delta_X $.
\par
One says that a topological space $X$ is \emph{uniformizable} if there is a uniform structure on $X$ inducing its topology.
\par
Every compact Hausdorff space $X$ is uniquely uniformizable, that is,
there is a unique uniform structure on $X$ inducing its topology. The entourages of this uniform structure are the neighborhoods of the diagonal in 
$X \times X$ (see \cite[Th\'eor\`eme~1, TG II.27]{bourbaki-top-gen}).
\par
When $X$ is a compact Hausdorff space,
it immediately follows from the normality of $X \times X$ that
the  closed entourages
(i.e., the entourages that are closed in $X \times X$)  
form a base of entourages
of the uniform structure on $X$.
\par
 If $d$ is a metric on a set $X$, then $d$ defines   a  uniform structure on $X$.
A base of  entourages for this uniform structure 
consists of all the symmetric sets
\begin{equation}
\label{e:Delta-d-epsilon}
\Delta_\varepsilon(X,d) \coloneqq 
\{(x,y) \in X \times X:   d(x,y) < \varepsilon\},
\end{equation}
with $\varepsilon > 0$.
The topology associated with this uniform structure coincides with the topology defined by the metric $d$.
\par
A uniform space $X$ is called \emph{metrizable} if there exists a metric $d$ on $X$ which
defines  the uniform structure on $X$.
The \emph{metrization theorem for uniform spaces} 
(see, e.g., \cite[Chapter~6, Theorem~13]{kelley}), states that a uniform space is metrizable
if and only if it is Hausdorff and admits a countable base of entourages.
\par
 A map $f \colon X \to Y$ between uniform spaces is said to be \emph{uniformly continuous} if
$(f \times f)^{-1}(W)$ is an entourage of $X$ for every entourage $W$ of $Y$.
One says that $f \colon X \to Y$ is a \emph{uniform isomorphism} if $f$ is bijective with  $f$ and $f^{-1}$ both uniformly continuous. 
\par
If $(X_k)_{k \in K}$ is a family of uniform spaces,
indexed by a set $K$, 
the \emph{product uniform structure} on the product set $X \coloneqq \prod_{k \in K} X_k$ is the smallest
(with respect to inclusion)
uniform structure on $X$ such that all projection maps $X \to X_k$, $k \in K$, are uniformly continuous.
In the case when the uniform structure on each $X_k$ is the discrete uniform structure, the product uniform structure on $X$ is called the
\emph{prodiscrete uniform structure}.

\subsection{Ultrauniform spaces}
A uniform structure $\UU$ on a set $X$ is called a \emph{ultrauniform structure} if $\UU$ admits a base of entourages consisting of graphs of equivalence relations.
In other words, $\UU$ is a ultrauniform structure if for every entourage $U \in \UU$, there exists a symmetric and transitive entourage $V \in \UU$
such that $V \subset U$. 
A set equipped with a ultrauniform structure is called a \emph{ultrauniform space}.

\begin{examples}
1) A set equipped with its discrete uniform structure is a ultrauniform space.
\par
2) The product of a family of ultrauniform spaces, equipped with its product uniform structure,  
is a ultrauniform space.
In particular, the prodiscrete uniform structure on a product of sets  is ultrauniform.
\par
3) If $X$ is a ultrauniform space and $Y \subset X$, then the uniform structure induced on $Y$ by  the uniform structure on $X$ is also ultrauniform.
\par
4) 
Let $(X,d)$ be a metric space and suppose that $d$ satisfies the \emph{ultrametric inequality}, 
i.e.,
$d(x,y) \leq \max(d(x,z),d(y,z))$ for all $x,y,z \in X$.
Then the uniform structure defined by $d$ on $X$ is ultrauniform.
Indeed, if $d$ satisfies the ultrametric inequality, then the sets $\Delta_\varepsilon(X,d)$ 
defined in~\eqref{e:Delta-d-epsilon} are clearly transitive.   
\end{examples}

\subsection{Actions}
An \emph{action} of a group $G$ on a set $X$ is a map $(g,x) \mapsto g x$ from $G \times X$ into $X$ such that
$g_1(g_2 x) = (g_1 g_2) x$ and $1_G x = x$ for all $g_1,g_2 \in G$ and $x \in X$.
\par
Let $X$ be a set equipped with an action of a group $G$.
Let $Y \subset X$.
For $g \in G$, we write $g Y \coloneqq \{ g y : y \in Y\}$.
Given a subset  $E \subset G$, we write
\begin{equation}
\label{e:def-cap-g-inv-Y}
Y^{(E)} \coloneqq \bigcap_{g \in E}  g^{-1} Y.
\end{equation}
Thus $x \in Y^{(E)}$ if and only if $g x \in  Y$ for all $g \in E$.
One says that  $Y$ is \emph{$G$-invariant} if $g Y =Y$ for all $g \in G$.
\par
Suppose that $(X_k)_{k \in K}$ is a family of sets and that a group $G$ acts on each of the sets $X_k$, $k \in K$.
Then $G$ naturally acts  on the product set $ \prod_{k \in K} X_k$
via the \emph{diagonal action} defined by
\[
g x \coloneqq (g x_k)_{k \in K}
\]
for all $x = (x_k)_{k \in K} \in \prod_{k \in K} X_k$.
\par
In particular, if a group $G$ acts on a set $X$, then $G$ naturally acts on $X \times X$ via the diagonal action.
\par
If a group $G$ acts on two sets $X$ and $X'$, one says that a map 
$f \colon X \to X'$ is \emph{$G$-equivariant} if $f(g x) = g f(x)$ for all $g \in G$ and $x \in X$.
\par
An action of a group $G$ on a topological space $X$ is said to be \emph{continuous} if the  map $x \mapsto g x$ is continuous on $X$ for every $g \in G$.
This amounts to saying that $g^{-1}U$ is an open subset of $X$ for every open subset $U \subset X$ and any $g \in G$.
\par
An action of a group $G$ on a uniform space $X$ is called \emph{uniformly continuous} if the  map 
$x \mapsto gx$ is uniformly continuous on $X$ for each $g \in G$.
This amounts to saying that $g^{-1} U$ is an entourage of $X$ for every entourage $U \subset X \times X$ of $X$ and any $g \in G$. 
\par
Let $X$ and $Y$ be topological spaces equipped with a continuous action of a group $G$.
One says that the topological dynamical system $(Y,G)$ is a \emph{topological factor} of the topological dynamical system $(X,G)$ if there exists 
a continuous $G$-equivariant surjective map from $X$ onto $Y$.
One says that the topological dynamical systems $(X,G)$ and $(Y,G)$ are
\emph{topologically conjugate} if there exists a $G$-equivariant homeomorphism between $X$ and $Y$.
Similarly, if $X$ and $Y$ are uniform spaces equipped with a uniformly continuous  action of a group $G$,
one says that the uniform dynamical system $(Y,G)$ is a \emph{uniform factor} of the uniform dynamical system $(X,G)$
if there exists a uniformly continuous $G$-equivariant surjective map from $X$ onto  $Y$.
One says that the uniform dynamical systems $(X,G)$ and $(Y,G)$ are \emph{uniformly conjugate} if there exists a $G$-equivariant uniform isomorphism between $X$ and $Y$.

\subsection{Covers}
Let $X$ be a set.
A \emph{cover} of $X$ is a set of subsets of $X$ whose union is $X$.
Let $\alpha$ and $\beta$ be  covers of $X$. 
The \emph{join} of $\alpha$ and $\beta$ is the cover $\alpha \vee \beta$  of $X$ consisting of all the  subsets of the form 
$A \cap B$ with $A \in \alpha$ and $B \in \beta$.
One says that $\beta$ is a \emph{subcover} of $\alpha$ if $\beta \subset \alpha$.
One says that  $\beta$ is a \emph{refinement} of $\alpha$ if
for every $B \in \beta$, there exists $A \in \alpha$ such that $B \subset A$.
Note that  $\alpha \vee \beta$ is a refinement of both $\alpha$ and $\beta$.
\par
When the set $X$ is equipped with an action of a group $G$, given a cover $\alpha$ of $X$ and an element $g \in G$, we denote by  $g \alpha$ the cover of $X$ consisting of all the subsets of the form $g A$ with $A \in \alpha$.
\par
When $X$ is a topological space, an \emph{open cover} of $X$ is a cover of $X$ whose elements are all open subsets of $X$.  

\subsection{Shift spaces}
Let $G$ be a group and let $A$ be a set called the \emph{alphabet}.
The set $A^G$ consisting of all maps $x \colon G \to A$
is called the set of \emph{configurations} over the group $G$ and the alphabet $A$.
\par
Given a subset $\Omega \subset G$ and a configuration $x \in A^G$, we shall denote by $x\vert_\Omega$ the restriction of $x$ to $\Omega$, i.e., the map 
$x\vert_\Omega \in A^\Omega$ given by
$x|_\Omega(g) = x(g)$ for all $g \in \Omega$.
\par
The \emph{shift} on $A^G$ is the action of $G$ on $A^G$  defined by
\begin{equation}
\label{e:shift-action}
gx(h) \coloneqq  x(g^{-1}h) \text{   for all $g,h \in G$ and $x \in A^G$.}
\end{equation}
\par
Suppose now that $A$ is a uniform space.
We equip $A^G = \prod_{g \in G} A$ with the product uniform structure.
A base of entourages for this uniform structure consists of all the sets 
\begin{equation}
\label{e:base-entourages-unif}
W(\Omega,U) \coloneqq \{(x,y) \in A^G \times A^G : (x(g),  y(g)) \in U \text{ for all } g \in \Omega  \},
\end{equation}
where $\Omega$ runs over all finite subsets of $G$ and $U$ runs over all entourages of $A$.
The shift action on $A^G$ is clearly uniformly continuous.
\par
A $G$-invariant closed subset $X \subset A^G$ is called a \emph{subshift} of $A^G$.
\par
In the case when $A$ is equipped with the discrete uniform structure,  the
corresponding product uniform structure on $A^G$ is the prodiscrete uniform structure and a base of entourages
of $A^G$ is formed by the sets
\begin{equation}
\label{e:base-entourages}
W(\Omega) \coloneqq \{(x,y) \in A^G \times A^G : x\vert_\Omega = y\vert_\Omega\},
\end{equation}
where $\Omega$ runs over all finite subsets of $G$ (see \cite{book}).
\par
Note that if $G$ is uncountable and $A$ is a Hausdorff (e.g.~discrete) uniform space  with more than one element,
then $A^G$ is not metrizable (not even first countable).

\subsection{Amenable groups}
There are many equivalent definitions of amenability for groups  
in the literature
(see for example~\cite{greenleaf}, \cite{paterson}, \cite{book}).
In the present paper, we shall only use 
the following one.

\begin{definition}
A group $G$ is called \emph{amenable} if there exists a net $\FF = (F_j)_{j \in J}$ of nonempty finite subsets $F_j \subset G$
such that 
\begin{equation}
\label{e:def-left-Folner-net}
\lim_{j \in J} \frac{|g F_j \setminus F_j|}{|F_j|} = 0 \quad \text{for all $g \in G$}.
\end{equation}
Such a net $\FF$ is called a \emph{left F\o lner net} for $G$.
\end{definition}

The class of amenable groups includes all finite groups, all abelian groups, all solvable groups, and all finitely generated groups with subexponential growth. 
Moreover, it is closed under taking subgroups, quotients, extensions, and directed limits.
\par
Given a group $G$, we denote by $\PPf(G)$ the set of all finite subsets of $G$.
A map $h \colon \PPf(G) \to \R$ is said to be \emph{subadditive} 
if $h(A \cup B) \leq h(A) + h(B)$
for all $A,B \in \PPf(G)$.
It is said to be \emph{right-invariant} 
if $h(A g) = h(A)$ for all $A \in \PPf(G)$ and $g \in G$.
The following convergence result is due to Ornstein and Weiss \cite{ornstein-weiss}
(see also~\cite[Section 1.3.1]{gromov-tids},   \cite{krieger}, \cite{csck}).

\begin{theorem}[Ornstein-Weiss lemma]
\label{t:OW}
Let $G$ be an amenable group and 
let $h \colon \PPf(G) \to \R$ be a subadditive  right-invariant map.
Then there exists a real number $\lambda \geq 0$, depending only on the map $h$,
with the following property:
if $ (F_j)_{j \in J}$ is a left F\o lner net  for $G$
then the net of real numbers 
$\left(\dfrac{h(F_j)}{|F_j|}\right)_{j \in J}$
converges to $\lambda$. 
\end{theorem}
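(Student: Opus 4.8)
The plan is to work directly with the normalized quantities $h(F_j)/|F_j|$ and to deduce everything from two elementary facts together with the Ornstein--Weiss quasi-tiling theorem.

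\textbf{Elementary bounds.} Since $A \cup A = A$, subadditivity forces $h(A) \le 2h(A)$, hence $h(A) \ge 0$ for every $A \in \PPf(G)$; so $h$ is automatically nonnegative. Setting $c \coloneqq h(\{1_G\})$, right-invariance gives $h(\{g\}) = c$ for all $g \in G$, and writing a nonempty finite set as a union of its points and iterating subadditivity yields $h(F) \le c|F|$. Thus $0 \le h(F)/|F| \le c$ for every nonempty finite $F \subset G$. In particular, along any left F\o lner net $\FF = (F_j)_{j \in J}$ the numbers $h(F_j)/|F_j|$ are bounded, so $\ell^{-}(\FF) \coloneqq \liminf_j h(F_j)/|F_j|$ and $\ell^{+}(\FF) \coloneqq \limsup_j h(F_j)/|F_j|$ are well defined and satisfy $0 \le \ell^{-}(\FF) \le \ell^{+}(\FF) \le c$.

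\textbf{Reduction.} It suffices to prove the inequality
\[ \ell^{+}(\FF) \le \ell^{-}(\FF') \qquad (\star) \]
for every pair of left F\o lner nets $\FF, \FF'$. Indeed, $(\star)$ with $\FF' = \FF$ gives $\ell^{+}(\FF) \le \ell^{-}(\FF)$, so $\lim_j h(F_j)/|F_j|$ exists; call it $\lambda(\FF)$. Applying $(\star)$ to two arbitrary nets then gives $\lambda(\FF) \le \lambda(\FF')$ and, by symmetry, equality. Hence this limit is a single real number $\lambda \ge 0$, independent of the net and depending only on $h$ --- which is precisely the assertion. (In fact $\lambda = \inf\{h(F)/|F| : \varnothing \ne F \in \PPf(G)\}$, but we do not need this.)

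\textbf{Proof of $(\star)$ by quasi-tiling.} Fix $\epsilon \in (0,\tfrac12)$, put $\beta \coloneqq \ell^{-}(\FF')$, and let $N = N(\epsilon)$ be the integer supplied by the Ornstein--Weiss quasi-tiling theorem. Right-translating a finite set changes neither its F\o lner ratios nor, by right-invariance, the value $h(F)/|F|$; using this we can pick indices $k_0, \dots, k_N$ of $\FF'$ and right-translates $T_0, \dots, T_N$ of $F'_{k_0}, \dots, F'_{k_N}$ such that for each $i$: (i) $1_G \in T_i$; (ii) $h(T_i) < (\beta + \epsilon)|T_i|$; and (iii) $T_i$ is sufficiently invariant relative to $T_0, \dots, T_{i-1}$ for the quasi-tiling theorem to apply. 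The point is that the indices satisfying (ii) are cofinal in the directed set (definition of $\liminf$), while those meeting the finitely many invariance conditions required by (iii) form a tail (definition of F\o lner net), and a tail meets every cofinal set. Now let $\FF = (F_j)$ be the other net. For $j$ large, $F_j$ is invariant enough that the quasi-tiling theorem yields finite ``center'' sets $C_0, \dots, C_N \subset G$ for which the right-translates $\{T_i c : 0 \le i \le N,\ c \in C_i\}$ are $\epsilon$-disjoint (each contains a subset of size $\ge (1-\epsilon)|T_i|$, and these subsets are pairwise disjoint) subsets of $F_j$ whose union covers at least $(1-\epsilon)|F_j|$ points of $F_j$. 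Put $R \coloneqq F_j \setminus \bigcup_{i,c} T_i c$, so $|R| \le \epsilon |F_j|$. Using finite subadditivity, then right-invariance of $h$, then (ii), then $\sum_{i,c}(1-\epsilon)|T_i| \le |F_j|$ (from $\epsilon$-disjointness inside $F_j$), and finally $h(R) \le c|R|$, we obtain
\begin{align*}
h(F_j) &\le \sum_{i=0}^{N}\sum_{c \in C_i} h(T_i c) + h(R) = \sum_{i=0}^{N} |C_i|\, h(T_i) + h(R) \\
&< (\beta+\epsilon)\sum_{i=0}^{N} |C_i|\,|T_i| + c\,\epsilon\,|F_j| \le \frac{\beta+\epsilon}{1-\epsilon}\,|F_j| + c\,\epsilon\,|F_j|.
\end{align*}
Dividing by $|F_j|$ and taking the $\limsup$ over $j$ gives $\ell^{+}(\FF) \le \frac{\beta+\epsilon}{1-\epsilon} + c\epsilon$; letting $\epsilon \to 0$ yields $(\star)$.

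\textbf{Main obstacle.} The one genuinely nontrivial ingredient is the Ornstein--Weiss quasi-tiling theorem: the combinatorial statement that in an amenable group, for every $\epsilon$ there is a bound $N(\epsilon)$ on the number of ``shapes'' needed, that these shapes may moreover be taken as invariant as one wishes (which is exactly what permits imposing (ii) simultaneously with (iii)), and that translates of them $\epsilon$-quasi-tile every sufficiently invariant finite set. Granting that, the remaining steps --- the two elementary bounds, the directed-set bookkeeping in the choice of the $T_i$, and the subadditivity estimate above --- are routine.
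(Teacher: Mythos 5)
The paper does not prove this statement at all: Theorem~\ref{t:OW} is quoted as a known result, with references to Ornstein--Weiss, Gromov, Krieger, and \cite{csck}, so there is no internal proof to compare yours against. On its own terms, your argument is sound as a reduction: the elementary bounds ($h\geq 0$, $h(F)\leq c|F|$), the observation that it suffices to prove $\ell^{+}(\FF)\leq\ell^{-}(\FF')$ for all pairs of F\o lner nets, the cofinal-meets-tail selection of the tiles $T_i$ (the indices with small $h$-ratio are cofinal, those with the required relative invariance form a tail), and the final estimate are all correct; in particular you avoid any appeal to monotonicity of $h$ by defining $R$ so that $F_j$ is exactly the union of the tiles and $R$, and the $\epsilon$-disjointness bound $\sum_i|C_i||T_i|\leq|F_j|/(1-\epsilon)$ is used correctly. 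The one substantive caveat is the one you name yourself: everything rests on the Ornstein--Weiss $\epsilon$-quasi-tiling theorem, taken as a black box, including the fact that the tiles may be chosen as (right-translates of) members of an arbitrary prescribed F\o lner net subject only to relative-invariance conditions. That theorem is an independent combinatorial statement (no circularity), but it is at least as deep as the convergence lemma itself, so your write-up is really the original Ornstein--Weiss route with the hard part deferred. By contrast, the sources the paper cites (Gromov's argument as exposed by Krieger, and the Fekete-type argument of \cite{csck}, which works for cancellative amenable semigroups and hence in the uncountable setting relevant here) prove the convergence directly by an $\epsilon$-disjointification/covering argument on $h$, without invoking the full quasi-tiling machinery; if you want a self-contained proof, that is the more economical path, whereas your reduction has the merit of isolating exactly where amenable tiling structure enters. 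Two cosmetic points: some standard formulations of the quasi-tiling theorem require $\epsilon<1/4$ rather than $\epsilon<1/2$ (harmless, since you let $\epsilon\to 0$), and you should state explicitly that the translates $T_ic$ produced by the theorem lie inside $F_j$, as your subadditivity step uses this.
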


\section{Expansiveness}
\label{sec:expansiveness}

An action of a group $G$ on a uniform space $X$ 
is called  \emph{expansive} if there exists  an entourage $U_0$ of $X$
satisfying the following property: 
for all points 
$x, y \in X$ with $x \not= y$,
  there exists an element $g \in G$ such that $(gx,gy) \notin U_0$.
Such an entourage $U_0$ is then called an \emph{expansiveness entourage} for the dynamical system 
$(X,G)$.

\begin{example}
\label{ex:shift-expansive}
Let $G$ be a group and let $A$ be a set.
Equip $A^G$ with its prodiscrete uniform structure and the shift action of $G$.
Then $(A^G,G)$ is expansive.
Indeed, the set $W(\{1_G\})$ defined by \eqref{e:base-entourages} is clearly an expansiveness entourage for the system. 
\end{example}

\begin{example}
\label{ex:sub-expansive}
Let $(X,G)$ be an expansive uniform dynamical system and let $Y \subset G$ be a $G$-invariant subset.
Then the uniform dynamical system $(Y,G)$ is also expansive.
Indeed, if $U_0$ is an expansiveness entourage for $(X,G)$, then $U_0 \cap (Y \times Y)$ is an expansiveness entourage for $(Y,G)$. 
\end{example}

\begin{example}
\label{ex:subshifts-expansive}
Combining  the two previous examples, we deduce that
if $G$ is a group, $A$ a set, and $X \subset A^G$ a subshift, then $(X,G)$ is expansive.
\end{example}
Expansiveness for uniformly continuous actions on uniform spaces has been investigated  in~\cite{bryant}, \cite{good-macias}  for 
iterates of uniformly continuous maps  and 
in~\cite{eisenberg-expansive}, \cite{lam-expansive},  \cite{csc-expansive-uni}
for actions of general groups.
\par
Observe that if a uniform space $X$ admits a uniformly continuous and expansive action of a group $G$ then the topology on $X$ is necessarily Hausdorff.
Indeed, if $U_0$ is an expansiveness entourage then  the diagonal in $X \times X$ is the intersection of the  entourages
$g^{-1}  U_0$, $g \in G$. 
\par
The following result was first established by Bryant~\cite[Theorem~1]{bryant} for 
$G = \Z$ and then extended to general countable groups by 
Eisenberg~\cite[Theorem~1]{eisenberg-expansive}
(see also~\cite[Corollary~2.8]{lam-expansive}).

\begin{theorem}
\label{t:expansive-metrizable}
Let $X$ be a compact Hausdorff space equipped with an expansive continuous  action of a countable  group $G$. Then $X$ is metrizable.
\end{theorem}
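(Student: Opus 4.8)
The plan is to invoke the metrization theorem for uniform spaces quoted above: since $X$ is compact Hausdorff, it carries a unique compatible uniform structure, and this uniform structure is metrizable if and only if it is Hausdorff and admits a countable base of entourages. Hausdorffness is given, so the whole problem reduces to producing a countable base of entourages for the canonical uniformity on $X$. Let $U_0$ be an expansiveness entourage for $(X,G)$. Since $X$ is compact Hausdorff, we may replace $U_0$ by a smaller closed (and, by Remark~\ref{rem:sym-entourage}, symmetric) entourage, so assume $U_0$ is closed and symmetric. First I would fix a sequence $V_1 \supset V_2 \supset \cdots$ of symmetric entourages of $X$ with $V_1 \subset U_0$ and $V_{n+1} \circ V_{n+1} \subset V_n$ for all $n$, which exists by repeated use of (UNI-5) and Remark~\ref{rem:sym-entourage}.

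The key step is to show that the countable family
\[
\mathcal{B} \coloneqq \left\{ \bigcap_{g \in F} g^{-1} V_n : n \in \N,\ F \subset G \text{ finite} \right\}
\]
is a base of entourages for $X$; since $G$ is countable, $\mathcal{B}$ is a countable set (each member is indexed by a pair $(n,F)$ with $n \in \N$ and $F$ ranging over the countable set $\PPf(G)$). Each such intersection is a finite intersection of entourages (the action being continuous, hence — on a compact Hausdorff space — uniformly continuous, so each $g^{-1}V_n$ is an entourage), hence is itself an entourage by (UNI-3). So it remains to prove that every entourage $W$ of $X$ contains some member of $\mathcal{B}$.

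For this I would argue by compactness. Suppose, for contradiction, that there is an entourage $W$ of $X$ such that for every $n \in \N$ and every finite $F \subset G$ the set $\left(\bigcap_{g \in F} g^{-1} V_n\right) \setminus W$ is nonempty. Enumerate $G = \{g_1, g_2, \dots\}$ and set $F_n \coloneqq \{g_1, \dots, g_n\}$. For each $n$ pick $(x_n, y_n) \in \bigl(\bigcap_{g \in F_n} g^{-1} V_n\bigr) \setminus W$; thus $(g_i x_n, g_i y_n) \in V_n$ for all $i \le n$, while $(x_n,y_n) \notin W$. By compactness of $X \times X$, pass to a subnet along which $(x_n,y_n) \to (x,y)$. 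Since $V_1 \subset U_0$ and each $(x_n,y_n) \in V_1$ with $U_0$ closed, $(x,y) \in U_0$; more to the point, for each fixed $g = g_i$ the pairs $(g x_n, g y_n)$ eventually lie in every $V_m$ (take $n \ge \max(i,m)$ and use $V_n \subset V_m$), and since $X$ is Hausdorff and the $V_m$ have intersection $\Delta_X$ on the compact set, continuity of the action gives $(g x, g y) \in \bigcap_m \overline{V_m} = \Delta_X$, i.e. $g x = g y$ for all $g \in G$. By expansiveness this forces $x = y$. But $(x_n, y_n) \notin W$ for all $n$, and since $W$ is a neighbourhood of the diagonal, $(x,y) = \lim (x_n,y_n)$ cannot lie on $\Delta_X$ — contradiction. (A small technical point: to get $(gx,gy) \in \Delta_X$ cleanly one should run the closedness argument with closed entourages $\overline{V_m}$, whose intersection over $m$ is still $\Delta_X$ because the intersection of all entourages is $\Delta_X$ in the Hausdorff case; this is routine.)

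Hence $\mathcal{B}$ is a countable base of entourages for the Hausdorff uniform space $X$, and the metrization theorem yields a metric defining the uniform structure, whose induced topology is the original topology of $X$. The main obstacle is the compactness-plus-expansiveness argument in the previous paragraph: one must combine the Fréchet-type diagonal sequence extracted from an enumeration of the countable group $G$ with the closedness of the $V_m$ to force the limit pair onto the diagonal, and this is exactly where countability of $G$ is used essentially. Everything else is bookkeeping with the axioms (UNI-1)--(UNI-5) and the quoted facts about compact Hausdorff uniform spaces.
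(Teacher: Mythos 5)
Your overall strategy is the same as the paper's: reduce to producing a countable base of entourages and invoke the metrization theorem, the base being built from finite intersections of translates of a closed expansiveness entourage. The paper isolates the key compactness step as Lemma~\ref{l:expansive-base-entourages}, proved by a direct open-cover argument (the sets $(X\times X)\setminus g^{-1}U_0$ cover the closed set $(X\times X)\setminus V$ for $V$ an open neighbourhood of the diagonal contained in the given entourage), which works for arbitrary $G$ and uses countability only to count the resulting base; your net-extraction argument reaches the same conclusion but already consumes countability in the diagonal extraction, and the auxiliary sequence $V_1\supset V_2\supset\cdots$ is not needed (the single entourage $U_0$ suffices).

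There is one step whose justification is wrong as written: the claim that $\bigcap_m \overline{V_m}=\Delta_X$. The intersection of \emph{all} entourages equals $\Delta_X$ in the Hausdorff case, but the intersection of the particular countable family $(V_m)$ you chose need not be the diagonal --- indeed, if it always were, every compact Hausdorff uniform space would have a countable base of entourages, which is exactly what cannot be assumed here (consider $\{0,1\}^{[0,1]}$ with the product uniformity: any countable decreasing chain of entourages has intersection strictly larger than $\Delta_X$). So you cannot conclude $gx=gy$ this way. Fortunately your argument does not need that conclusion: for each fixed $g=g_i$ you have $(g_i x_n,g_i y_n)\in V_n\subset V_1$ for all $n\ge i$, hence $(g_i x,g_i y)\in \overline{V_1}\subset U_0$ by continuity of the action of $g_i$ and closedness of $U_0$; since this holds for every $g\in G$, expansiveness with respect to $U_0$ directly forces $x=y$, contradicting $(x,y)\notin \operatorname{int}(W)\supset\Delta_X$. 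With that one-line repair the proof is correct.
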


For the proof, we shall use the following result.

\begin{lemma}
\label{l:expansive-base-entourages}
Let $X$ be a compact Hausdorff space equipped with an expansive continuous  action of a   group $G$
and let $U_0$ be a closed  expansiveness entourage for $(X,G)$. 
Then, for every entourage $U$ of $X$, there exists a finite subset 
$E = E(U) \subset G$ such that
$\bigcap_{g \in E} g^{-1}U_0  \subset U$.
\end{lemma}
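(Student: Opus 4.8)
The plan is to exploit the compactness of $X$ together with the fact that $U_0$ is a closed expansiveness entourage. By expansiveness, the family $\{g^{-1}U_0 : g \in G\}$ has intersection equal to $\Delta_X$. Since each $g^{-1}U_0$ is closed in the compact Hausdorff space $X \times X$ (continuity of the action preserves closedness under the homeomorphism $x \mapsto gx$ applied coordinatewise), the sets $X \times X \setminus g^{-1}U_0$ form an open family. The key idea is to pass to complements inside an appropriate closed set and invoke the finite intersection property.

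First I would fix an arbitrary entourage $U$ of $X$. Without loss of generality (enlarging $U$ only makes the conclusion easier, and by (UNI-2) any set containing an entourage is an entourage) I may assume $U$ is an \emph{open} entourage, or alternatively work with a closed entourage $V \subset U$; either way the essential object is the closed set $C \coloneqq (X \times X) \setminus U$ (if $U$ is open) — or more robustly, I would instead consider the closed set $K \coloneqq \bigcap_{g \in G} g^{-1}U_0 = \Delta_X$ and argue as follows. Consider the closed sets $F_g \coloneqq g^{-1}U_0 \cap \big((X\times X)\setminus \mathring{U}\big)$ as $g$ ranges over $G$, where $\mathring U$ denotes a neighborhood of the diagonal contained in $U$ (such exists since entourages of a compact Hausdorff space are exactly the neighborhoods of $\Delta_X$). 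Each $F_g$ is closed, hence compact, in $X \times X$. Their total intersection $\bigcap_{g \in G} F_g = \Delta_X \cap \big((X \times X)\setminus \mathring U\big) = \varnothing$, since $\Delta_X \subset \mathring U$. By the finite intersection property for the compact space $X \times X$, there is a finite subset $E \subset G$ with $\bigcap_{g \in E} F_g = \varnothing$, which means $\bigcap_{g \in E} g^{-1}U_0 \subset \mathring U \subset U$. Setting $E(U) \coloneqq E$ completes the argument.

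The only point requiring care — and the step I would flag as the main obstacle — is the reduction that lets me replace a general entourage $U$ by an open neighborhood of the diagonal: I need that every entourage $U$ of the (uniquely uniformizable) compact Hausdorff space $X$ contains a neighborhood of $\Delta_X$. This is precisely the cited characterization that the entourages of a compact Hausdorff space are the neighborhoods of the diagonal in $X \times X$ (\cite[Th\'eor\`eme~1, TG II.27]{bourbaki-top-gen}), so $U$ itself is such a neighborhood and I may take $\mathring U = \text{int}(U)$, noting $\Delta_X \subset \text{int}(U)$. A secondary point is to confirm that $g^{-1}U_0$ is closed: the map $X \times X \to X \times X$, $(x,y) \mapsto (gx, gy)$, is a homeomorphism (its inverse is the analogous map for $g^{-1}$, both continuous by continuity of the action), and $g^{-1}U_0$ is exactly the preimage of the closed set $U_0$ under this homeomorphism, hence closed. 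With these two observations in place the compactness argument is routine.
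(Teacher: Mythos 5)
Your proof is correct and is essentially the paper's argument: the paper covers the compact set $(X\times X)\setminus V$ (with $V$ an open neighborhood of $\Delta_X$ inside $U$) by the open sets $(X\times X)\setminus g^{-1}U_0$ and extracts a finite subcover, which is exactly the De Morgan dual of your finite-intersection-property argument with the closed sets $F_g = g^{-1}U_0 \cap \big((X\times X)\setminus \mathring{U}\big)$. The two supporting observations you flag (entourages of a compact Hausdorff space are neighborhoods of the diagonal, and $g^{-1}U_0$ is closed as the preimage of $U_0$ under the continuous map $(x,y)\mapsto(gx,gy)$) are exactly the points the paper relies on as well.
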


\begin{proof}
Let $U$ be an entourage of $X$ and let $V$ be an open neighborhood of the diagonal in $X \times X$ such that $V \subset U$.
Since $U_0$ is a closed expansiveness entourage,
the open sets $(X \times X) \setminus g^{-1} U_0$, $g \in G$, cover $(X \times X) \setminus V$.
By compactness of  $(X \times X) \setminus V$,  there exists a finite subset $E   \subset G$ such that
\[
(X \times X) \setminus V \subset \bigcup_{g \in E} ((X \times X) \setminus g^{-1} U_0).
\]
This implies $\bigcap_{g \in E} g^{-1} U_0 \subset V \subset U$.
 \end{proof}

\begin{proof}[Proof of Theorem~\ref{t:expansive-metrizable}]
By the metrization theorem for uniform spaces mentioned above, it suffices to show that $X$ admits a countable base of entourages.
Let $U_0$ be a closed expansiveness entourage.
By Lemma~\ref{l:expansive-base-entourages},  the set $\BB$ consisting of all the entourages of the form 
 $\bigcap_{g \in E} g^{-1} U_0$, where $E$ runs over all finite subsets of $G$, is a base of entourages of $X$.
As the group $G$ is countable, the set of its finite subsets is also countable.
Thus  $\BB$ is countable as well.
This shows that    $X$ is metrizable.
\end{proof}

\section{Topological entropy}
\label{sec:top-entropy}

\subsection{Topological entropy}
Topological entropy for  continuous self-mappings of topological spaces was first introduced by Adler, Konheim and McAndrew~\cite{adler}.
Their definition was directly inspired by the one given by Kolmogorov for measure-theoretic entropy.
\par
Let $X$ be a  topological space.
\par 
If $\alpha$ is a finite open cover of $X$, we denote by $N_X(\alpha)$, or simply $N(\alpha)$ if there is no risk of confusion on the ambient space, the minimal cardinality of a subcover of $\alpha$.
Observe that if $\alpha$ and $\beta$ are finite open covers of $X$, then  $\alpha \vee \beta$ is also a finite open cover of $X$. Moreover, it satisfies
\begin{equation}
\label{e:card-join-oc}
N(\alpha \vee \beta) \leq N(\alpha) \cdot N(\beta).
\end{equation}
Note also that if $\beta$ is a refinement of $\alpha$, then $N(\beta) \geq N(\alpha)$. 
\par
Suppose now that $X$ is equipped with a continuous action of a  group $G$. 
If $\alpha$ is a finite open cover of $X$ and $g \in G$, then $g \alpha$ is also 
a finite open cover of $X$ and
\begin{equation}
\label{e:card-g-alpha}
N(g \alpha) = N(\alpha).
\end{equation}
If $\alpha$ is an open cover of $X$ and $F$ is a finite subset of $G$, we define the open cover 
$\alpha^{(F)}$ by 
\[
\alpha^{(F)} \coloneqq \bigvee_{g \in F} g^{-1}  \alpha.
\]

\begin{lemma}
\label{l:h-foc-subadd}
Let $\alpha$ be a finite open cover of $X$.
Then the map $h \colon \PP_{\small fin}(G) \to \R$ defined by 
$h(F) \coloneqq  \log N(\alpha^{(F)})$
is subadditive and right $G$-invariant.
\end{lemma}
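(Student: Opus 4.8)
The plan is to verify the two properties directly from the definitions, using the elementary inequalities for open covers recorded just above the lemma. The key observation is that the operation $\alpha \mapsto \alpha^{(F)} = \bigvee_{g \in F} g^{-1}\alpha$ behaves well with respect to union of finite subsets of $G$, and that $N$ is submultiplicative under joins.

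\medskip

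First I would prove subadditivity. Let $A, B \in \PPf(G)$. I claim that
\[
\alpha^{(A \cup B)} = \alpha^{(A)} \vee \alpha^{(B)}.
\]
Indeed, joining over $g \in A \cup B$ is the same as joining the sub-join over $g \in A$ with the sub-join over $g \in B$ (the join operation on covers is associative and commutative, and $A \cup B = A \cup B$ as index sets, with any repeated indices causing no harm since $\gamma \vee \gamma = \gamma$ up to the obvious refinement—more precisely $N(\gamma \vee \gamma) = N(\gamma)$, which is all we need). Then, applying \eqref{e:card-join-oc} to the finite open covers $\alpha^{(A)}$ and $\alpha^{(B)}$,
\[
N\bigl(\alpha^{(A \cup B)}\bigr) = N\bigl(\alpha^{(A)} \vee \alpha^{(B)}\bigr) \leq N\bigl(\alpha^{(A)}\bigr) \cdot N\bigl(\alpha^{(B)}\bigr).
\]
Taking logarithms gives $h(A \cup B) \leq h(A) + h(B)$, which is subadditivity.

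\medskip

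Next I would prove right $G$-invariance. Let $F \in \PPf(G)$ and $g \in G$. I want $h(Fg) = h(F)$, i.e.\ $N(\alpha^{(Fg)}) = N(\alpha^{(F)})$. Unwinding the definition,
\[
\alpha^{(Fg)} = \bigvee_{h \in Fg} h^{-1}\alpha = \bigvee_{f \in F} (fg)^{-1}\alpha = \bigvee_{f \in F} g^{-1} f^{-1} \alpha = g^{-1}\Bigl(\bigvee_{f \in F} f^{-1}\alpha\Bigr) = g^{-1}\bigl(\alpha^{(F)}\bigr),
\]
where the fourth equality uses that translation by $g^{-1}$ commutes with the join operation, since $g^{-1}(P \cap Q) = g^{-1}P \cap g^{-1}Q$ for subsets $P, Q \subset X$. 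Now by \eqref{e:card-g-alpha} applied to the finite open cover $\alpha^{(F)}$ and the element $g^{-1}$,
\[
N\bigl(\alpha^{(Fg)}\bigr) = N\bigl(g^{-1}(\alpha^{(F)})\bigr) = N\bigl(\alpha^{(F)}\bigr),
\]
so $h(Fg) = h(F)$.

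\medskip

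I do not anticipate any genuine obstacle here; the only point requiring a little care is the bookkeeping in the identity $\alpha^{(A\cup B)} = \alpha^{(A)} \vee \alpha^{(B)}$ when $A$ and $B$ overlap, where one should note that an extra repeated factor $g^{-1}\alpha$ in a join only refines (and in fact does not change $N$), so the submultiplicativity estimate \eqref{e:card-join-oc} still applies verbatim. Everything else is a direct consequence of \eqref{e:card-join-oc} and \eqref{e:card-g-alpha} together with the fact that pulling back a cover along the homeomorphism $x \mapsto g x$ commutes with finite joins.
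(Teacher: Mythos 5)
Your proof is correct and follows essentially the same route as the paper: subadditivity comes from the fact that $\alpha^{(A\cup B)}$ and $\alpha^{(A)}\vee\alpha^{(B)}$ refine each other together with $N(\alpha\vee\beta)\le N(\alpha)N(\beta)$, and right invariance comes from the identity $\alpha^{(Fg)}=g^{-1}\alpha^{(F)}$ together with $N(g\alpha)=N(\alpha)$. Your parenthetical care about overlapping index sets (where the two covers agree only up to mutual refinement, which is all that matters for $N$) matches the paper's phrasing that the covers ``refine each other.''
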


\begin{proof}
If $F$ and $F'$ are finite subsets of $G$, then the covers
$\alpha^{(F \cup F')}$ and $\alpha^{(F)}  \vee \alpha^{(F')}$ refine each other so that
\[
h(F \cup F') = \log N(\alpha^{(F \cup F')} ) 
= \log N(\alpha^{(F)} \vee \alpha^{(F')} ) \leq \log N(\alpha^{(F)}) + \log N(\alpha^{(F')} ) = h(F) + h(F')
\]
by applying~\eqref{e:card-join-oc}. This shows that $h$ is subadditive.
\par
On the other hand, for every $g \in G$, we have
\[
h(F g) = \log N(\alpha^{(F g)}) = \log N\left(\bigvee_{k \in F g} k^{-1}
\alpha\right) = \log N\left(g^{-1} \alpha^{(F)}\right) =
\log N\left(\alpha^{(F)}\right)
\]
by applying~\eqref{e:card-g-alpha}.
This shows that $h$ is right $G$-invariant.  
 \end{proof}

Let $\alpha$ be a finite open cover of $X$.
Let $(F_j)_{j \in J}$ be a left F\o lner net for $G$.
By  Theorem~\ref{t:OW} and Lemma~\ref{l:h-foc-subadd}, the limit
\[
\htop(X,G,\alpha)\coloneqq \lim_{j \in J} \frac{\log N(\alpha^{(F_j)})}{|F_j|}
\]
exists,  is finite, and does not depend on the choice of the F\o lner net for $G$.
\par
We define the \emph{topological entropy} $ \htop(X,G) \leq \infty$ of the topological dynamical system $(X,G)$ by
\begin{equation}
\label{e:def-htop-X-G}
\htop(X,G) \coloneqq \sup_\alpha \htop(X,G,\alpha),  
\end{equation}
where $\alpha$ runs over all finite open covers of $X$.
Observe that
\[
\htop(X,G) = \lim_{\alpha \in \CC}  \htop(X,G,\alpha),
\]
where $\CC$ is the set consisting of all finite open covers of $X$ partially ordered by the relation defined by  
$\alpha \leq \beta$ if  $\beta$ is a refinement of $\alpha$.

\begin{theorem}
\label{t:properties-top-ent}
Let $G$ be an amenable group.
Then the following hold.
\begin{enumerate}[\rm (i)]
\item
Suppose that  $X$ is a topological space equipped with a continuous action of $G$ and
let  $Y \subset X$ be a closed invariant subset.
Then one has $\htop(Y,G) \leq \htop(X,G)$.
\item 
Let $X$ and $Y$ be topological spaces equipped with a continuous action of  $G$.
Suppose that the system $(Y,G)$ is a topological factor of $(X,G)$.
Then one has $\htop(Y,G) \leq \htop(X,G)$.
\end{enumerate} 
\end{theorem}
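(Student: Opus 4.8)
Both statements are standard ``monotonicity'' properties of topological entropy, and the plan is to reduce each to the corresponding statement about a single finite open cover, i.e.\ to show $\htop(Y,G,\beta) \leq \htop(X,G)$ for an appropriate cover, and then pass to the supremum.

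For part (i), let $\beta$ be a finite open cover of $Y$. Since $Y$ carries the subspace topology, each element of $\beta$ has the form $V \cap Y$ with $V$ open in $X$; choosing such a $V$ for each member of $\beta$ and adjoining the open set $X \setminus Y$ produces a finite open cover $\alpha$ of $X$ whose restriction $\alpha|_Y \coloneqq \{A \cap Y : A \in \alpha\}$ refines $\beta$ (in fact $\alpha|_Y = \beta \cup \{\varnothing\}$ up to the empty set, which may be discarded). The key observation is that restriction to $Y$ commutes with the operations $\vee$, with $g \mapsto g^{-1}(\cdot)$ (using that $Y$ is $G$-invariant), and hence with $(\cdot)^{(F)}$, so that $(\alpha|_Y)^{(F)} = \alpha^{(F)}|_Y$ for every finite $F \subset G$. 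Any subcover of $\alpha^{(F)}$ restricts to a cover of $Y$ by members of $\alpha^{(F)}|_Y$, whence $N_Y\big((\alpha|_Y)^{(F)}\big) \leq N_X\big(\alpha^{(F)}\big)$. Dividing by $|F_j|$ along a left F\o lner net and passing to the limit gives $\htop(Y,G,\alpha|_Y) \leq \htop(X,G,\alpha) \leq \htop(X,G)$, and since $\alpha|_Y$ refines $\beta$ we get $\htop(Y,G,\beta) \leq \htop(Y,G,\alpha|_Y) \leq \htop(X,G)$. Taking the supremum over $\beta$ yields the claim.

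For part (ii), let $\pi \colon X \to Y$ be a continuous $G$-equivariant surjection and let $\beta$ be a finite open cover of $Y$. Then $\pi^{-1}\beta \coloneqq \{\pi^{-1}(B) : B \in \beta\}$ is a finite open cover of $X$ by continuity of $\pi$. Because $\pi$ is $G$-equivariant one has $\pi^{-1}(g^{-1}B) = g^{-1}\pi^{-1}(B)$, and $\pi^{-1}$ commutes with finite intersections, so $\pi^{-1}(\beta^{(F)}) = (\pi^{-1}\beta)^{(F)}$ for every finite $F \subset G$. Moreover, since $\pi$ is surjective, the preimage of any subcover of $\beta^{(F)}$ is a subcover of $(\pi^{-1}\beta)^{(F)}$ of the same cardinality, and conversely surjectivity forces $N_X\big((\pi^{-1}\beta)^{(F)}\big) = N_Y\big(\beta^{(F)}\big)$ — at worst one needs the inequality $N_Y\big(\beta^{(F)}\big) \leq N_X\big((\pi^{-1}\beta)^{(F)}\big)$, which follows because the images under $\pi$ of a subcover of $(\pi^{-1}\beta)^{(F)}$ cover $Y$. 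Dividing by $|F_j|$ and taking the limit along a F\o lner net gives $\htop(Y,G,\beta) \leq \htop(X,G,\pi^{-1}\beta) \leq \htop(X,G)$, and the supremum over $\beta$ completes the proof.

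I do not anticipate a genuine obstacle here; the only point requiring a little care is the bookkeeping of the empty set when restricting covers in part (i) (one should simply delete $\varnothing$ from $\alpha|_Y$, which does not affect $N_Y$ provided $Y \neq \varnothing$, the case $Y = \varnothing$ being trivial since then $\htop(Y,G) = 0$ by convention, or one may argue it away directly), together with the verification that the cover operations $\vee$, $g^{-1}(\cdot)$ and $(\cdot)^{(F)}$ are respected by restriction and by $\pi^{-1}$, which is a routine unwinding of definitions. The engine in both parts is the same: produce a cover upstairs that ``dominates'' the given cover downstairs compatibly with the $G$-action, compare the cover numbers $N(\cdot)$ at each finite level, and invoke the already-established existence of the limit $\htop(\cdot,G,\alpha)$ along a left F\o lner net from Lemma~\ref{l:h-foc-subadd} and Theorem~\ref{t:OW}.
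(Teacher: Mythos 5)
Your proof is correct and takes essentially the same route as the paper's: for (i) you extend the cover of $Y$ by open sets of $X$ together with $X\setminus Y$ and compare cover numbers after restriction (using $G$-invariance of $Y$ so that restriction commutes with $(\cdot)^{(F)}$), and for (ii) you pull back covers along the factor map and use surjectivity to get $N_Y(\beta^{(F)})\leq N_X((\pi^{-1}\beta)^{(F)})$, concluding in both cases via the F\o lner limit and the supremum over covers. No gaps.
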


\begin{proof}
Let $\FF = (F_j)_{j \in J}$ be a left F\o lner net for $G$.
\par
(i) Consider  a finite open cover $\alpha$ of $Y$. 
For each $A \in \alpha$, we can find an open subset $A'$ of $X$ such that
$A = A' \cap Y$. Then $\alpha' \coloneqq \{A': A \in \alpha\} \cup \{X \setminus Y\}$ is a finite open cover of $X$
and
$N_Y(\alpha) \leq N_X(\alpha')$. Also, since $Y$ is $G$-invariant, so is $X \setminus Y$,
and, for every finite subset $F \subset G$, 
we have $\alpha^{(F)}= \left(\alpha'\right)^{(F)} \cap Y$.
\par
It follows that
\[
\htop(Y,G,\alpha) = \lim_{j \in J} \frac{\log N_Y(\alpha^{(F_j)})}{|F_j|}
\leq \lim_j \frac{\log N_X((\alpha')^{(F_j)})}{|F_j|}
= \htop(X,G,\alpha')
\leq \htop(X,G).
\]
Taking the supremum over all finite open covers $\alpha$ of $Y$, this gives us
$\htop(Y,G) \leq \htop(X,G)$.
\par
(ii)
As $(Y,G)$ is a topological factor of $(X,G)$,
there is a continuous $G$-equivariant surjective map $f \colon X \to Y$.
Let $\alpha$ be a finite open cover of $Y$.
Then $\alpha_X \coloneqq f^{-1}(\alpha) = \{f^{-1}(A): A \in \alpha\}$ is a finite open cover of $X$.
Moreover, if $\beta$ is a subcover of $\alpha$, then $\beta_X$ is a subcover of $\alpha_X$ and, since $f$ is surjective,
$N_Y(\alpha) = N_X(\alpha_X)$. Also, for every finite subset $F \subset G$ one has
$\left(\alpha^{(F)}\right)_X = \left(\alpha_X\right)^{(F)}$.
\par
We then have
\[
\htop(Y,G,\alpha) = \lim_{j \in J}  \frac{\log N_Y(\alpha^{(F_j)})}{|F_j|}
= \lim_{j \in J} \frac{\log N_X((\alpha_X)^{(F_j)})}{|F_j|} = \htop(X,G,\alpha_X)
\leq \htop(X,G).
\]
Taking the supremum over all finite open covers $\alpha$ of $Y$, 
we get $\htop(Y,G) \leq \htop(X,G)$.
\end{proof}

\subsection{$(F,U)$-separated subsets, $(F,U)$-spanning subsets, and $(F,U)$-covers}
Let $X$ be a compact uniform space equipped with a uniformly continuous action of a group $G$.
\par
Denote by  $\UU$ the directed set consisting of all entourages of $X$ partially ordered by reverse inclusion.
\par
Let $U \in \UU$  and let $F$ be a finite subset of $G$.
We define the entourage 
$U^{(F)}$ by 
\[
U^{(F)} \coloneqq \bigcap_{g \in F} g^{-1}  U.
\]
A subset $Z \subset X$ is said to be \emph{$(F,U)$-separated} if $(x,y) \notin U^{(F)}$
for all distinct $x,y \in Z$.
This amounts to saying that if $x,y \in Z$ satisfy $(g x, g y) \in U$ for all $g \in F$, then $x = y$.

\begin{lemma}
\label{l:sep-finite}
Every $(F,U)$-separated subset $Z \subset X$ is finite.
More precisely, there is an integer $N = N(X,F,U)$ such that every $(F,U)$-separated subset 
$Z \subset X$ has cardinality at most $N$.
\end{lemma}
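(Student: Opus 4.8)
The plan is to reduce the statement to a standard compactness argument involving a single entourage. First I would note that $U^{(F)} = \bigcap_{g \in F} g^{-1}U$ is indeed an entourage of $X$: each $g^{-1}U$ is an entourage because the action of $G$ on $X$ is uniformly continuous, and a finite intersection of entourages is an entourage by (UNI-3). Hence it suffices to establish the following auxiliary fact: for every compact uniform space $X$ and every entourage $W$ of $X$, there is an integer $N = N(X,W)$ such that any subset $Z \subset X$ satisfying $(x,y) \notin W$ for all distinct $x,y \in Z$ has cardinality at most $N$. Applying this with $W = U^{(F)}$ then yields the lemma, with $N(X,F,U) \coloneqq N(X,U^{(F)})$.

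To prove the auxiliary fact, I would use Remark~\ref{rem:sym-entourage} to pick a symmetric entourage $V$ of $X$ with $V \circ V \subset W$. For each point $x \in X$, the set $V[x]$ is a neighborhood of $x$ in the topology induced by the uniform structure, so there is an open set $O_x$ with $x \in O_x \subset V[x]$. The family $(O_x)_{x \in X}$ is an open cover of the compact space $X$, hence there are finitely many points $x_1,\dots,x_N \in X$ with $X = O_{x_1} \cup \cdots \cup O_{x_N}$. I would then set $N = N(X,W)$, observing that this integer depends only on $X$ and $W$ (through the choice of $V$ and of the finite subcover), and in particular not on $Z$.

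Finally I would check that each $V[x_i]$ contains at most one point of $Z$. Indeed, if $y$ and $z$ both lie in $Z \cap V[x_i]$, then $(x_i,y) \in V$ and $(x_i,z) \in V$; since $V$ is symmetric we get $(y,x_i) \in V$, and therefore $(y,z) \in V \circ V \subset W$. As distinct points of $Z$ are never $W$-related, this forces $y = z$. Since $X = \bigcup_{i=1}^N O_{x_i} \subset \bigcup_{i=1}^N V[x_i]$, the set $Z$ is contained in a union of $N$ sets each meeting $Z$ in at most one point, so $|Z| \leq N$, which proves both assertions of the lemma.

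I do not expect a genuine obstacle here: this is the uniform-space analogue of the classical fact that a compact metric space admits no infinite $\varepsilon$-separated subset, and the uniform bound $N$ drops out of compactness at no extra cost. The only points needing a little care are verifying that $U^{(F)}$ is an entourage (which is exactly where uniform continuity of the action enters) and that $V[x]$ is a neighborhood, so that one genuinely obtains an open cover of $X$ to which compactness applies.
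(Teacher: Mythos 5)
Your proof is correct and follows essentially the same route as the paper: choose a symmetric entourage $V$ with $V \circ V \subset U^{(F)}$, cover the compact space $X$ by finitely many sets $V[x]$, and observe that each such set meets an $(F,U)$-separated set in at most one point. The only difference is that you factor the argument through an auxiliary statement for a general entourage $W$ and spell out the one-point-per-$V[x_i]$ verification, which the paper leaves implicit.
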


\begin{proof}
Let $V$ be a symmetric entourage of $X$ such that  $V \circ V \subset  U^{(F)}$.
For each $x \in X$, the set $V[x]$ is a neighborhood of $x$.
By compactness of $X$, there is a finite subset $K \subset X$ such that $X = \bigcup_{x \in K} V[x]$.
Now let $Z \subset X$ be an $(F,U)$-separated subset.
As each $V[x]$ can contain at most one point of $Z$,
the set  $Z$ is finite with cardinality at most $N \coloneqq |K|$. 
\end{proof}

We define the integer $\sep(X,G,F,U)$ as being the maximal cardinality of an $(F,U)$-separated subset contained   in $X$:
\begin{equation}
\label{e:def-sep}
\sep(X,G,F,U) \coloneqq \max \{|Z| : \mbox{$Z \subset X$ is $(F,U)$-separated}\}.
\end{equation}

A subset $Z \subset X$ is said to be \emph{$(F,U)$-spanning} if for every $x \in X$ there exists $z = z(x) \in Z$ such that 
$(z,x) \in U^{(F)}$.
This amounts to saying that for every $x \in X$, there exists $z \in Z$ such that $(g z, g x) \in U$ for all $g \in F$.

\begin{lemma}
\label{l:finite-spanning}
There exists a finite $(F,U)$-spanning subset $Z \subset X$.
\end{lemma}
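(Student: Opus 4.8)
The plan is to mimic the classical argument that in a compact (uniform) space, totally bounded sets admit finite spanning subsets, exploiting that the entourage $U^{(F)}$ is still an entourage of $X$ (being a finite intersection of entourages, each $g^{-1}U$ being an entourage by uniform continuity of the action). Concretely, I would argue as follows.

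First I would observe that $U^{(F)} = \bigcap_{g \in F} g^{-1}U$ is an entourage of $X$: for each $g \in F$, the action being uniformly continuous means $g^{-1}U$ is an entourage, and a finite intersection of entourages is an entourage by axiom (UNI-3). In particular $U^{(F)}$ is reflexive, so for every point $x \in X$ the set $U^{(F)}[x]$ is a neighbourhood of $x$. Next, by compactness of $X$, the open cover $\{\operatorname{int}(U^{(F)}[x])\}_{x \in X}$ — or more simply the cover by the neighbourhoods $U^{(F)}[x]$ — admits a finite subcover, so there is a finite subset $Z = \{x_1, \dots, x_m\} \subset X$ with $X = \bigcup_{i=1}^m U^{(F)}[x_i]$. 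Then for any $x \in X$ there is some $i$ with $x \in U^{(F)}[x_i]$, i.e. $(x_i, x) \in U^{(F)}$; unwinding the definition, $(g x_i, g x) \in U$ for all $g \in F$. Hence $Z$ is a finite $(F,U)$-spanning subset, which is exactly the claim.

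I expect there to be essentially no obstacle here: the only subtlety is making sure one can genuinely extract a finite subcover, which requires the sets $U^{(F)}[x]$ to be used as neighbourhoods (they contain an open neighbourhood of $x$, namely $\operatorname{int}(U^{(F)})[x]$ or the interior of $U^{(F)}[x]$), and this is standard for uniform spaces. Alternatively, and perhaps more cleanly, one can deduce the statement from Lemma~\ref{l:sep-finite}: take a maximal $(F,U/2)$-separated set, where $V$ is a symmetric entourage with $V \circ V \subset U^{(F)}$; such a maximal set exists and is finite by Lemma~\ref{l:sep-finite}, and maximality forces it to be $V$-spanning, hence $U^{(F)}$-spanning. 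Either route is short; I would present the direct compactness argument since it does not even need the refinement of $U$.

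One small point worth flagging in the writeup: the statement of the lemma presupposes $F$ and $U$ are fixed as in the surrounding text (a finite subset of $G$ and an entourage of $X$), and that $X$ is a compact uniform space with a uniformly continuous $G$-action — all of which is in force from the start of the subsection — so no additional hypotheses need to be invoked. The proof is a two-line consequence of ``$U^{(F)}$ is an entourage'' plus ``compact $\Rightarrow$ totally bounded with respect to every entourage''.
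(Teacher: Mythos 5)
Your direct compactness argument is exactly the paper's proof: observe that $U^{(F)}$ is an entourage, so each $U^{(F)}[x]$ is a neighbourhood of $x$, extract a finite subcover by compactness, and conclude. The proposal is correct and takes essentially the same approach.
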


\begin{proof}
The set $U^{(F)}[x]$ is a neighborhood of $x$ in $X$ for each $x \in X$.
By compactness of $X$, 
 there exists a finite subset $Z \subset X$ such that $X = \bigcup_{z \in Z} U^{(F)}[z]$.
Then $Z$ is a finite $(F,U)$-spanning subset for $X$.
\end{proof}

We define the integer $\spa(X,G,F,U)$ as being the minimal cardinality of an $(F,U)$-spanning subset for  $X$: 
\begin{equation}
\label{e:def-spa}
\spa(X,G,F,U) \coloneqq \min \{|Z | : \mbox{$Z \subset X$ is $(F,U)$-spanning}\}.
\end{equation}

A  cover $\alpha$ of $X$ is called an \emph{$(F,U)$-cover}
if for each $A \in \alpha$ and all $x,y \in A$ one has $(x,y) \in U^{(F)}$, that is,
$(g x, g y) \in U$ for all $g \in F$.

\begin{lemma}
\label{l:finite-covering}
There exists a finite $(F,U)$-cover $\alpha$ of $X$.
\end{lemma}

\begin{proof}
Let $V$ be a symmetric entourage of $X$ such that $V \circ V \subset U$.
The set $A_x \coloneqq V^{(F)}[x]$ is a neighborhood of $x$ in $X$ for each $x \in X$.
By compactness, we can find a finite subset $Z \subset X$ such that 
the set $\alpha \coloneqq
\{A_z :z \in Z \}$ is a cover of $X$. If $z \in Z$ and $x,y \in A_z$ we have $(x,z), (z,y) \in V^{(F)}$
so that $(x,y) \in V^{(F)} \circ V^{(F)} \subset (V \circ V)^{(F)} \subset U^{(F)}$.
Therefore  $\alpha$ is a finite $(F,U)$-cover of $X$.
\end{proof}

We define the integer $\cov(X,G,F,U)$ as being the minimal cardinality of an $(F,U)$-cover of $X$.

\begin{lemma}
\label{l:sep-spa-cov-right-inv}
Let $g \in G$.
Then one has
\begin{equation}
\label{e:sep-spa-cov-right-inv}
\begin{split}
\sep(X,G,Fg,U) & = \sep(X,G,F,U)\\
\spa(X,G,F g,U) & = \spa(X,G,F,U)\\
\cov(X,G,F g,U) & = \cov(X,G,F,U).
\end{split}
\end{equation}
\end{lemma}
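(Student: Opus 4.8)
The plan is to exploit the right-invariance that was already established for open covers (Lemma~\ref{l:h-foc-subadd}) and transfer it to the three combinatorial quantities $\sep$, $\spa$, $\cov$ by a direct bijective argument. The key observation is the identity
\[
\bigl(U^{(Fg)}\bigr) = \bigcap_{k \in Fg} k^{-1} U = \bigcap_{h \in F} (hg)^{-1} U = g^{-1}\!\!\left(\bigcap_{h \in F} h^{-1} U\right) = g^{-1} U^{(F)},
\]
which holds because the action of $g^{-1}$ on $X \times X$ (the diagonal action) is a bijection commuting with intersections, and because left multiplication by $g$ is a bijection of $G$ carrying $F$ onto $Fg$. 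Here $g^{-1} U^{(F)}$ denotes the image of $U^{(F)}$ under the diagonal action of $g^{-1}$ on $X \times X$, i.e.\ $\{(g^{-1}x, g^{-1}y) : (x,y) \in U^{(F)}\}$.

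Next I would use the fact that the map $\varphi_g \colon X \to X$, $x \mapsto g^{-1} x$, is a bijection (it is the action of $g^{-1}$, with inverse the action of $g$), together with the equivalence
\[
(x,y) \in U^{(Fg)} \iff (g^{-1}x, g^{-1}y) \in U^{(F)},
\]
which is just a restatement of the displayed identity above. From this it follows immediately that a subset $Z \subset X$ is $(Fg,U)$-separated if and only if $\varphi_g(Z) = g^{-1}Z$ is $(F,U)$-separated, that $Z$ is $(Fg,U)$-spanning if and only if $g^{-1}Z$ is $(F,U)$-spanning, and that a cover $\alpha$ is an $(Fg,U)$-cover if and only if $g^{-1}\alpha$ is an $(F,U)$-cover. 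Since $\varphi_g$ is a bijection, it preserves cardinalities of subsets and of covers, and it carries covers of $X$ to covers of $X$. Taking maxima (for $\sep$) and minima (for $\spa$ and $\cov$) over the respective families then yields the three equalities in~\eqref{e:sep-spa-cov-right-inv}.

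None of this is hard; the only point requiring a little care is bookkeeping the direction of the group action and making sure that left multiplication on the index set $G$ (sending $F$ to $Fg$) is matched with the correct power of $g$ acting on $X \times X$ — a sign error there would give $Fg^{-1}$ instead of $Fg$. I would therefore write out the identity $U^{(Fg)} = g^{-1} U^{(F)}$ explicitly as the one computational step, and then state the three bijection arguments in parallel, remarking that they are entirely analogous so as to avoid repetition. The conclusion is then immediate from the definitions~\eqref{e:def-sep} and~\eqref{e:def-spa} and the corresponding definition of $\cov$.
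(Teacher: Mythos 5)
Your proposal is correct in substance and follows essentially the same route as the paper: a cardinality-preserving translation bijection between $(Fg,U)$-separated (resp.\ spanning) subsets and $(Fg,U)$-covers on one side and the corresponding $(F,U)$-objects on the other. There is, however, one direction slip --- precisely the bookkeeping pitfall you warn about: since $U^{(Fg)} = g^{-1}U^{(F)}$ is the \emph{image} of $U^{(F)}$ under the diagonal action of $g^{-1}$, the correct restatement is $(x,y) \in U^{(Fg)} \iff (gx,gy) \in U^{(F)}$, not $(g^{-1}x,g^{-1}y) \in U^{(F)}$; consequently $Z$ is $(Fg,U)$-separated iff $gZ$ (not $g^{-1}Z$) is $(F,U)$-separated, which is exactly how the paper phrases it ($Z$ is $(F,U)$-separated iff $g^{-1}Z$ is $(Fg,U)$-separated). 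The slip is harmless for the conclusion, since $Z \mapsto gZ$ is just as much a cardinality-preserving bijection of subsets (and of covers) of $X$ as $Z \mapsto g^{-1}Z$, but you should fix the direction before writing the argument out.
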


\begin{proof}
A subset  $Z \subset X$ is $(F,U)$-separated if and only if  $g^{-1} Z$ is $(F g, U)$-separated.
As $|Z| = |g^{-1} Z|$, this gives us the first equality.
Similarly, the second one follows from the fact that  $Z \subset X$ is $(F,U)$-spanning if and only if $g^{-1} Y$ is $(F g, U)$-spanning.
The last equality follows from the fact that a cover $\alpha$ of $X$ is an $(F,U)$-cover if and only if  $g^{-1}\alpha$ is an $(Fg,U)$-cover.
\end{proof}

\begin{lemma}
\label{l:sep-spa-cov-decrease}
The maps $U \mapsto \sep(X,G,F,U)$,
$U \mapsto \spa(X,G,F,U)$, and
$U \mapsto \cov(X,G,F,U)$ are non-decreasing on $\UU$.
\end{lemma}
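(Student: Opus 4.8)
The plan is to unwind the three definitions and reduce everything to the single elementary fact that $U \mapsto U^{(F)}$ is monotone for inclusion. First I would fix the finite set $F \subset G$ together with two entourages $U, V$ of $X$ satisfying $V \subseteq U$; note that $V \subseteq U$ is exactly the relation ``$U \leq V$'' in the reverse-inclusion ordering on $\UU$, so that the assertion to be proved is that replacing $U$ by the smaller entourage $V$ cannot decrease any of $\sep(X,G,F,U)$, $\spa(X,G,F,U)$, $\cov(X,G,F,U)$. Since $V \subseteq U$ gives $g^{-1} V \subseteq g^{-1} U$ for every $g \in G$, intersecting over $g \in F$ yields $V^{(F)} \subseteq U^{(F)}$, and this is the only containment I will need.

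Next I would carry out the three comparisons in parallel, taking care that the resulting inclusions between the relevant families of test objects run in \emph{opposite} directions for separated subsets on the one hand and for spanning subsets and covers on the other, while the use of $\max$ in the first case and of $\min$ in the other two makes all three monotonicity statements come out the same way. In detail: if $Z \subset X$ is $(F,U)$-separated then for distinct $x, y \in Z$ one has $(x,y) \notin U^{(F)}$, hence a fortiori $(x,y) \notin V^{(F)}$, so $Z$ is $(F,V)$-separated; thus every $(F,U)$-separated subset is $(F,V)$-separated and, taking the maximum of $|Z|$ over the larger family, $\sep(X,G,F,U) \leq \sep(X,G,F,V)$. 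Dually, if $Z$ is $(F,V)$-spanning then for each $x \in X$ there is $z \in Z$ with $(z,x) \in V^{(F)} \subseteq U^{(F)}$, so $Z$ is $(F,U)$-spanning; hence every $(F,V)$-spanning subset is $(F,U)$-spanning and, minimizing $|Z|$ over the larger family, $\spa(X,G,F,U) \leq \spa(X,G,F,V)$. Likewise, if $\alpha$ is an $(F,V)$-cover then for $A \in \alpha$ and $x, y \in A$ one has $(x,y) \in V^{(F)} \subseteq U^{(F)}$, so $\alpha$ is an $(F,U)$-cover, and minimizing the cardinality over the larger family gives $\cov(X,G,F,U) \leq \cov(X,G,F,V)$.

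Finally I would re-read the reverse-inclusion convention on $\UU$ to confirm that these three inequalities say precisely that $U \mapsto \sep(X,G,F,U)$, $U \mapsto \spa(X,G,F,U)$, and $U \mapsto \cov(X,G,F,U)$ are non-decreasing on $\UU$. I do not expect any real obstacle here: the only delicate point is the bookkeeping of the two opposite directions of implication (``$(F,U)$-separated $\Rightarrow$ $(F,V)$-separated'' versus ``$(F,V)$-spanning $\Rightarrow$ $(F,U)$-spanning'', and similarly for covers) together with the $\max$/$\min$ convention, and I would spell this out explicitly rather than leave it to the reader so that no inequality gets reversed.
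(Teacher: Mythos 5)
Your proof is correct and follows exactly the paper's argument: reduce to the inclusion $V^{(F)} \subseteq U^{(F)}$ for $V \subseteq U$ and observe that the families of separated sets, spanning sets, and covers nest in the appropriate directions. The paper states this more tersely, but the content and the handling of the reverse-inclusion ordering and $\max$/$\min$ conventions are identical.
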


\begin{proof}
It suffices to observe that if $U_1,U_2 \in \UU$ satisfy $U_1 \subset U_2$, then
every $(F,U_2)$-separated subset of $X$ is also $(F,U_1)$-separated,
 every $(F,U_1)$-spanning subset of $X$ is also $(F,U_2)$-spanning,
and every $(F,U_1)$-cover of $X$ is also an $(F,U_2)$-cover. 
\end{proof}

\begin{lemma}
\label{l:inegalites-entre-cov-sep-spa}
Let $U$ and $V$ be entourages of $X$ such that $V$ is symmetric and $U \circ U^* \subset V$.
Then one has
\begin{multline}
\label{e:relations-spa-sep-cov}
\cov(X,G,F,V \circ V) \leq \spa(X,G,F,V) \leq \sep(X,G,F,V) \\
\leq \spa(X,G,F,U) \leq \cov(X,G,F,U).
\end{multline}
\end{lemma}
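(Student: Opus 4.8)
The inequality chain splits into four separate comparisons, which I would treat in turn. The two innermost comparisons $\spa \leq \sep$ and $\sep \leq \spa$ (the latter with a change of entourage) are the standard reciprocal inequalities between spanning and separated sets, adapted to the $U^{(F)}$ setting; the two outermost ones relate covers to spanning sets. The plan is to exploit that $U^{(F)} \circ (U^*)^{(F)} = (U \circ U^*)^{(F)} \subset V^{(F)}$ and, similarly, $(V \circ V)^{(F)} \supset V^{(F)} \circ V^{(F)}$, using the monoid-ordering remarks from the ``Subsets of $X \times X$'' section to move composites through the operation $W \mapsto W^{(F)} = \bigcap_{g \in F} g^{-1}W$.

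First, \textbf{$\cov(X,G,F,V\circ V) \leq \spa(X,G,F,V)$}: given a finite $(F,V)$-spanning set $Z$ of minimal cardinality (which exists by Lemma~\ref{l:finite-spanning}), form the cover $\alpha \coloneqq \{\,V^{(F)}[z] : z \in Z\,\}$. It covers $X$ by the spanning property, and if $x,y \in V^{(F)}[z]$ then $(z,x) \in V^{(F)}$ and $(z,y) \in V^{(F)}$; since $V$ is symmetric, $V^{(F)}$ is symmetric too, so $(x,z) \in V^{(F)}$, whence $(x,y) \in V^{(F)} \circ V^{(F)} \subset (V \circ V)^{(F)}$. Thus $\alpha$ is an $(F, V \circ V)$-cover with $|\alpha| \leq |Z| = \spa(X,G,F,V)$. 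Second, \textbf{$\spa \leq \sep$ for $V$}: a maximal $(F,V)$-separated set $Z$ (finite and of maximal cardinality by Lemma~\ref{l:sep-finite}) is automatically $(F,V)$-spanning, since any $x \in X$ with $(z,x) \notin V^{(F)}$ for all $z \in Z$ could be added to $Z$, using that $V^{(F)}$ is symmetric, contradicting maximality; hence $\spa(X,G,F,V) \leq |Z| = \sep(X,G,F,V)$.

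Third, \textbf{$\sep(X,G,F,V) \leq \spa(X,G,F,U)$}: let $Z$ be an $(F,V)$-separated set and $S$ an $(F,U)$-spanning set; define a map $\varphi \colon Z \to S$ by choosing, for each $z \in Z$, some $\varphi(z) \in S$ with $(\varphi(z), z) \in U^{(F)}$. If $\varphi(z) = \varphi(z')$, then $(\varphi(z), z) \in U^{(F)}$ and $(\varphi(z'), z') \in U^{(F)}$, so $(z, \varphi(z)) \in (U^*)^{(F)}$ and therefore $(z, z') \in (U^*)^{(F)} \circ U^{(F)} = (U \circ U^*)^{(F)} \subset V^{(F)}$; since $Z$ is $(F,V)$-separated this forces $z = z'$, so $\varphi$ is injective and $|Z| \leq |S|$. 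Taking the extremal choices gives $\sep(X,G,F,V) \leq \spa(X,G,F,U)$. Fourth, \textbf{$\spa(X,G,F,U) \leq \cov(X,G,F,U)$}: given a finite $(F,U)$-cover $\alpha$ of minimal cardinality (Lemma~\ref{l:finite-covering}), pick one point in each member of $\alpha$; the resulting set $Z$ has $|Z| \leq |\alpha|$ and is $(F,U)$-spanning, since any $x \in X$ lies in some $A \in \alpha$, and then the chosen point $z_A \in A$ satisfies $(z_A, x) \in U^{(F)}$ by the defining property of an $(F,U)$-cover.

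The only genuinely delicate point is keeping track of which entourage needs to be symmetric and verifying the identity $(U^*)^{(F)} \circ U^{(F)} = (U \circ U^*)^{(F)}$ (and that $W \mapsto W^{(F)}$ respects composites of relations on which $\Delta_X$ acts trivially); everything else is routine pigeonhole and compactness already packaged in Lemmas~\ref{l:sep-finite}--\ref{l:finite-covering}. I would state the relevant elementary identities about $W^{(F)}$ once at the start of the proof and then apply them mechanically.
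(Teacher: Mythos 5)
Your proof is correct and follows essentially the same four-step argument as the paper's: a minimal $(F,V)$-spanning set yields an $(F,V\circ V)$-cover via the sets $V^{(F)}[z]$, a maximal $(F,V)$-separated set is automatically spanning, an injective assignment from an $(F,V)$-separated set into an $(F,U)$-spanning set using $U\circ U^*\subset V$, and picking one point from each member of a minimal $(F,U)$-cover. The only nitpick is that $U^{(F)}\circ (U^*)^{(F)}\subset (U\circ U^*)^{(F)}$ is in general only an inclusion rather than the equality you assert (the intermediate point on the right-hand side may depend on $g\in F$), but the inclusion is all your argument actually uses.
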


\begin{proof}
Let $Y \subset X$ be an $(F,V)$-spanning subset with minimal cardinality.
Then $\alpha \coloneqq \{V^{(F)}[y] : y \in Y \}$ is a finite cover of $X$.
Moreover, if $x,x' \in V^{(F)}[y]$, then  $(y,x), (y,x') \in V^{(F)}$
and hence $(x,x') \in   V^{(F)} \circ V^{(F)} \subset (V \circ V)^{(F)} $.
This shows that $\alpha$ is an $(F,V \circ V)$-cover of $X$.
We deduce that $\cov(X,G,F,V \circ V) \leq |\alpha| \leq |Y| = \spa(X,G,F,V)$.
\par
Let now $Z \subset X$ be an $(F,V)$-separated subset with maximal cardinality.
If $y \in X \setminus Z$, then $y \in V^{(F)}[z]$ for some $z \in Z$ by maximality.
Thus $Z$ is an $(F,V)$-spanning subset of $X$ and hence
$\spa(X,G,F,V) \leq |Z| = \sep(X,G,F,V)$.
\par
Suppose now that $S \subset X$ is an $(F,U)$-spanning subset for $X$ with minimal cardinality.
Thus for each $x \in X$ there exists $s(x) \in S$ such that $(gs(x),gx) \in U$ for all $g \in F$.
If $z_1, z_2 \in Z$ are distinct, then $s(z_1) \neq s(z_2)$, otherwise, since $U \circ U^* \subset V$,
we would have $(gz_1, gz_2) \in V$ for all $g \in F$, contradicting the fact that $Z$ is $(F,V)$-separated.
It follows that $\sep(X,G,F,V) = |Z| \leq |S| = \spa(X,G,F,U)$.
\par
Finally, let $\alpha$ be an $(F,U)$-cover of $X$ with minimal cardinality. For each $A \in \alpha$ pick
$t_A \in A$ and set $T \coloneqq \{t_A: A \in \alpha\}$. Given $x \in X$ we can find $A \in \alpha$
such that $x \in A$. Since $\alpha$ is an $(F,U)$-cover, we have $(t_A,x) \in U^{(F)}$.
This shows that $T$ is an $(F,U)$-spanning subset. We deduce that
$\spa(X,G,F,U) \leq |T| \leq |\alpha| = \cov(X,G,F,U)$.
\end{proof}

\begin{lemma}
\label{l:cov-sub-multipli}
Let $U \in \UU$  and let $E,F$ be two finite subsets of $G$. Then
\begin{equation}
\label{e:cov-union}
\cov(X,G,E \cup F,U) \leq \cov(X,G,E,U) \cdot \cov(X,G,F,U).
\end{equation}
\end{lemma}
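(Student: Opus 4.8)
The plan is to mimic, at the level of $(F,U)$-covers, the sub-multiplicativity argument used in \eqref{e:card-join-oc} for finite open covers, the essential combinatorial tool being the join of two covers. First I would invoke Lemma~\ref{l:finite-covering} to pick a finite $(E,U)$-cover $\alpha$ of $X$ with $|\alpha| = \cov(X,G,E,U)$ and a finite $(F,U)$-cover $\beta$ of $X$ with $|\beta| = \cov(X,G,F,U)$.

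Next I would form the join $\alpha \vee \beta = \{A \cap B : A \in \alpha,\ B \in \beta\}$. This is again a (finite) cover of $X$, and by construction $|\alpha \vee \beta| \leq |\alpha| \cdot |\beta|$.

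The one point to check is that $\alpha \vee \beta$ is an $(E \cup F, U)$-cover. Let $C = A \cap B \in \alpha \vee \beta$, with $A \in \alpha$ and $B \in \beta$, and let $x,y \in C$. Since $x,y \in A$ and $\alpha$ is an $(E,U)$-cover, we have $(x,y) \in U^{(E)}$; since $x,y \in B$ and $\beta$ is an $(F,U)$-cover, we have $(x,y) \in U^{(F)}$. As $U^{(E)} \cap U^{(F)} = \bigcap_{g \in E} g^{-1}U \cap \bigcap_{g \in F} g^{-1}U = \bigcap_{g \in E \cup F} g^{-1}U = U^{(E \cup F)}$, it follows that $(x,y) \in U^{(E \cup F)}$, which is exactly the defining condition for $\alpha \vee \beta$ to be an $(E \cup F, U)$-cover.

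Finally, combining the above, $\cov(X,G,E \cup F,U) \leq |\alpha \vee \beta| \leq |\alpha|\cdot|\beta| = \cov(X,G,E,U)\cdot\cov(X,G,F,U)$, which is \eqref{e:cov-union}. There is no serious obstacle: the argument is entirely formal, the only mild observation being the identity $U^{(E)} \cap U^{(F)} = U^{(E \cup F)}$, which is immediate from the definition of $U^{(F)}$.
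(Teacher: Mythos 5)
Your argument is correct and is essentially identical to the paper's own proof: both take minimal-cardinality $(E,U)$- and $(F,U)$-covers, form their join, verify via $U^{(E)} \cap U^{(F)} = U^{(E\cup F)}$ that the join is an $(E\cup F,U)$-cover, and conclude from $|\alpha \vee \beta| \leq |\alpha|\cdot|\beta|$.
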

\begin{proof}
Let $\alpha$ (resp.\ $\beta$) be an $(E,U)$-cover (resp.\ $(F,U)$-cover) of $X$ with minimal cardinality.
Let $A \in \alpha$ and  $B \in \beta$. 
If $x, y \in A \cap B$,
then 
$(x,y) \in U^{(E)} \cap U^{(F)} = U^{(E \cup F)}$.
Therefore $\alpha \vee \beta$ is an $(E \cup F, U)$-cover of $X$. 
It follows that $\cov(X,G,E \cup F,U)  \leq |\alpha \vee \beta| \leq |\alpha| \cdot |\beta| = \cov(X,G,E,U) \cdot \cov(X,G,F,U)$.
\end{proof}

\subsection{Uniform approaches to topological entropy}
The computation of topological entropy  for uniformly continuous actions of amenable groups on compact uniform spaces we discuss now is a uniform analogue
of the Bowen-Dinaburg approach in the metrizable case~\cite{bowen-entropy-group-endo-1971},
\cite{dinaburg}.   
\par
Let $X$ be a compact uniform space equipped with a uniformly continuous action of an amenable  group $G$.
Let $\FF  = (F_j)_{j \in J}$ be a left F\o lner net for $G$.
\par
Let $U$ be an entourage of $X$.
We define the quantities $\hsep(X,G,\FF,U)$ and $\hspa(X,G,\FF,U) $ by
\begin{equation}
\label{d:hsep-U}
\hsep(X,G,\FF,U) \coloneqq \limsup_{j \in J} \frac{\log \sep(X,G,F_j,U)}{|F_j|}
\end{equation}
and 
\begin{equation}
\label{d:hspa-U}
\hspa(X,G,\FF,U) \coloneqq  \limsup_{j \in J} \frac{\log \spa(X,G,F_j,U)}{|F_j|}.
\end{equation}
It follows from Lemma~\ref{l:sep-spa-cov-right-inv} and Lemma~\ref{l:cov-sub-multipli} that the map
$F \mapsto \log \cov(X,G,F,U)$ is right-invariant and subadditive on $\PPf(G)$.
Thus, we deduce from Theorem~\ref{t:OW}
that the limit
\begin{equation}
\label{e:def-hcov}
\hcov(X,G,U) \coloneqq \lim_{j \in J} \frac{\log \cov(X,G,F_j,U)}{|F_j|} 
\end{equation}
exists, is finite, and does not depend on the choice of the left F\o lner net $\FF$.

\begin{lemma}
\label{l:inegalites-entre-cov-sep-spa-h}
Let $U$ and $V$  be  entourages of $X$ such that $V$ is symmetric and $U \circ U^* \subset V$.
Then one has
\begin{multline}
\label{e:relations-spa-sep-cov-h}
\hcov(X,G,V \circ V)
\leq \hspa(X,G,\FF,V)
\leq \hsep(X,G,\FF,V)\\
\leq \spa(X,G,\FF,U)
\leq \hcov(X,G,U).
\end{multline}
\end{lemma}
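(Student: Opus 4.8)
The plan is to deduce the chain of inequalities in~\eqref{e:relations-spa-sep-cov-h} from the finite-level inequalities already proved in Lemma~\ref{l:inegalites-entre-cov-sep-spa} by passing to the appropriate limits along the F\o lner net $\FF = (F_j)_{j \in J}$. Concretely, applying Lemma~\ref{l:inegalites-entre-cov-sep-spa} with $F = F_j$ (for each $j \in J$) and the given pair of entourages $U,V$ yields
\[
\cov(X,G,F_j,V \circ V) \leq \spa(X,G,F_j,V) \leq \sep(X,G,F_j,V) \leq \spa(X,G,F_j,U) \leq \cov(X,G,F_j,U).
\]
Taking logarithms, dividing by $|F_j|$, and then passing to the limit superior over $j \in J$ preserves all four inequalities. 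For the two outer terms, the limit superior is in fact a genuine limit by the Ornstein--Weiss lemma (Theorem~\ref{t:OW}), since $F \mapsto \log\cov(X,G,F,U)$ is subadditive and right-invariant by Lemmas~\ref{l:sep-spa-cov-right-inv} and~\ref{l:cov-sub-multipli}; this is exactly the content of the definition~\eqref{e:def-hcov} of $\hcov$. For the three middle terms we simply use the definitions~\eqref{d:hsep-U} and~\eqref{d:hspa-U} of $\hsep$ and $\hspa$ as limits superior, together with the elementary fact that $\limsup$ is monotone: if $a_j \leq b_j$ for all $j$ then $\limsup_j a_j \leq \limsup_j b_j$.

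The one point requiring a little care is that the outer quantities are defined as limits while the inner ones are only limits superior, so I would spell out the bookkeeping explicitly. For the leftmost inequality, from $\log\cov(X,G,F_j,V\circ V) \leq \log\spa(X,G,F_j,V)$ we get $\hcov(X,G,V\circ V) = \lim_j |F_j|^{-1}\log\cov(X,G,F_j,V\circ V) = \limsup_j |F_j|^{-1}\log\cov(X,G,F_j,V\circ V) \leq \limsup_j |F_j|^{-1}\log\spa(X,G,F_j,V) = \hspa(X,G,\FF,V)$, using that a convergent net has limit superior equal to its limit. Symmetrically, for the rightmost inequality $\hsep(X,G,\FF,U)$ — wait, the rightmost term on the right-hand side is $\hcov(X,G,U)$, and $\hspa(X,G,\FF,U) = \limsup_j |F_j|^{-1}\log\spa(X,G,F_j,U) \leq \limsup_j |F_j|^{-1}\log\cov(X,G,F_j,U) = \lim_j |F_j|^{-1}\log\cov(X,G,F_j,U) = \hcov(X,G,U)$, again because the net on the right converges. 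The two purely middle inequalities $\hspa(X,G,\FF,V) \leq \hsep(X,G,\FF,V) \leq \hspa(X,G,\FF,U)$ follow directly from monotonicity of $\limsup$ applied to the corresponding finite-level inequalities in~\eqref{e:relations-spa-sep-cov}. (I note that the statement as displayed writes $\spa(X,G,\FF,U)$ where $\hspa(X,G,\FF,U)$ is evidently meant; I would correct this typo.)

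There is essentially no obstacle here: the real work was done at the finite level in Lemma~\ref{l:inegalites-entre-cov-sep-spa}, and this lemma is a routine ``take $\limsup$'' corollary. The only thing to double-check is that the hypotheses of Lemma~\ref{l:inegalites-entre-cov-sep-spa} — namely that $V$ is symmetric and $U \circ U^* \subset V$ — are exactly those assumed here, so the finite-level inequalities apply verbatim with $F$ replaced by each $F_j$; and that Theorem~\ref{t:OW} applies to $\log\cov$, which was already recorded in the paragraph preceding the statement. So the proof is just: invoke Lemma~\ref{l:inegalites-entre-cov-sep-spa} at each $F_j$, take $|F_j|^{-1}\log(-)$, pass to $\limsup_{j\in J}$, and identify the outer limits superior with the limits guaranteed by Theorem~\ref{t:OW}.
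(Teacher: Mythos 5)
Your proposal is correct and matches the paper's proof, which likewise just takes limits in Lemma~\ref{l:inegalites-entre-cov-sep-spa} along the F\o lner net; your explicit bookkeeping about $\limsup$ versus genuine limits (via Theorem~\ref{t:OW}) and your observation that $\spa(X,G,\FF,U)$ in the display is a typo for $\hspa(X,G,\FF,U)$ are both accurate.
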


\begin{proof}
This is immediately deduced from Lemma~\ref{l:inegalites-entre-cov-sep-spa}
after taking limits.
\end{proof}

\begin{lemma}
\label{l:h-sep-spa-cov-decrease}
The maps $U \mapsto \hsep(X,G,\FF,U)$,
$U \mapsto \hspa(X,G,\FF,U)$, and
$U \mapsto \hcov(X,G,U)$ are non-decreasing on $\UU$.
\end{lemma}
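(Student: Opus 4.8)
The plan is to obtain this lemma as an immediate consequence of the finite-level monotonicity already recorded in Lemma~\ref{l:sep-spa-cov-decrease}, simply by taking logarithms, normalizing by $|F_j|$, and passing to the limit along the F\o lner net $\FF = (F_j)_{j \in J}$. First I would fix entourages $U_1, U_2$ of $X$ with $U_1 \subset U_2$ and recall that, in the directed set $\UU$ ordered by \emph{reverse} inclusion, this makes $U_1$ the larger of the two elements; hence the assertion to be proved is precisely that each of $\hsep$, $\hspa$, and $\hcov$ takes at $U_1$ a value no smaller than at $U_2$.

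By Lemma~\ref{l:sep-spa-cov-decrease}, for every finite subset $F \subset G$ one has the three inequalities $\sep(X,G,F,U_2) \leq \sep(X,G,F,U_1)$, $\spa(X,G,F,U_2) \leq \spa(X,G,F,U_1)$, and $\cov(X,G,F,U_2) \leq \cov(X,G,F,U_1)$. Since each of these cardinalities is a positive integer (finite $(F,U)$-separated subsets, $(F,U)$-spanning subsets, and $(F,U)$-covers of $X$ always exist by Lemmas~\ref{l:sep-finite}, \ref{l:finite-spanning}, and~\ref{l:finite-covering}), I would apply the logarithm, which is non-decreasing on $(0,\infty)$, and divide by $|F_j| > 0$ after specializing to $F = F_j$. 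Taking $\limsup_{j \in J}$ in the first two cases and $\lim_{j \in J}$ in the third (this limit exists and is independent of $\FF$ by the discussion preceding the statement of this lemma, via Theorem~\ref{t:OW}), and using that $\limsup$ and $\lim$ respect pointwise inequalities of nets of real numbers, one gets $\hsep(X,G,\FF,U_2) \leq \hsep(X,G,\FF,U_1)$, $\hspa(X,G,\FF,U_2) \leq \hspa(X,G,\FF,U_1)$, and $\hcov(X,G,U_2) \leq \hcov(X,G,U_1)$, which is exactly the claimed monotonicity.

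I do not anticipate any genuine obstacle here: the substantive content lies entirely in Lemma~\ref{l:sep-spa-cov-decrease}, and the only point that requires a moment's care is the bookkeeping with the direction of the order on $\UU$ — keeping in mind that ``non-decreasing on $\UU$'' refers to reverse inclusion, so that enlarging an entourage as a subset of $X \times X$ can only decrease these normalized logarithmic quantities.
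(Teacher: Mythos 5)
Your proof is correct and follows exactly the paper's route: the paper likewise derives this lemma immediately from Lemma~\ref{l:sep-spa-cov-decrease} by taking limits along the F\o lner net. Your extra care with the reverse-inclusion ordering on $\UU$ is accurate and just makes explicit what the paper leaves implicit.
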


\begin{proof}
This immediately follows from Lemma~\ref{l:sep-spa-cov-decrease} by taking limits.
\end{proof}

 By Lemma~\ref{l:h-sep-spa-cov-decrease},
 the maps 
$U \mapsto \hsep(X,G,\FF,U)$,
$U \mapsto \hspa(X,G,\FF,U)$,
and $U \mapsto \hcov(X,G,U)$  
admit (finite or infinite) limits on $\UU$ that are equal to their upperbounds, namely
\begin{align}
\label{e:def-hsep-X-G} \hsep(X,G,\FF) &\coloneqq \lim_{U \in \UU} \hsep(X,G,\FF,U) = \sup_{U \in \UU} \hsep(X,G,\FF,U), \\
\label{d:hspa}
\hspa(X,G,\FF) &\coloneqq \lim_{U \in \UU} \hspa(X,G,\FF,U) = \sup_{U \in \UU} \hspa(X,G,\FF,U), \\
\hcov(X,G) &\coloneqq \lim_{U \in \UU}  \hcov(X,G,U) = \sup_{U \in \UU}  \hcov(X,G,U).
\end{align} 

\begin{lemma}
\label{l:hc-hs-equal}
One has
\begin{equation}
\label{e:hc-hs-equal}
\hsep(X,G,\FF) = \hspa(X,G,\FF) = \hcov(X,G).
\end{equation}
\end{lemma}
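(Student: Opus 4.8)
The plan is to deduce all three identities in \eqref{e:hc-hs-equal} from the single chain of inequalities supplied by Lemma~\ref{l:inegalites-entre-cov-sep-spa-h} by passing to suprema over entourages; the only things to watch are the direction of monotonicity of $\sep$, $\spa$, $\cov$ in the entourage variable and the cofinality of the auxiliary families of entourages that intervene.

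First I would record the following consequences of Lemma~\ref{l:inegalites-entre-cov-sep-spa-h}. Given a symmetric entourage $V$ of $X$, Remark~\ref{rem:sym-entourage} furnishes a symmetric entourage $U$ with $U\circ U\subset V$; since $U$ is symmetric, $U\circ U^{*}=U\circ U\subset V$, so Lemma~\ref{l:inegalites-entre-cov-sep-spa-h} applies to the pair $(U,V)$ and gives
\begin{equation}
\label{e:chaine-spa-sep-cov}
\hcov(X,G,V\circ V)\le\hspa(X,G,\FF,V)\le\hsep(X,G,\FF,V)\le\hspa(X,G,\FF,U)\le\hcov(X,G,U).
\end{equation}
Applying instead Lemma~\ref{l:inegalites-entre-cov-sep-spa-h} to the pair $(U,U\circ U)$ — admissible whenever $U$ is symmetric, since $U\circ U$ is then symmetric and $U\circ U^{*}=U\circ U$ — its last inequality yields
\begin{equation}
\label{e:spa-le-cov}
\hspa(X,G,\FF,U)\le\hcov(X,G,U)\qquad\text{for every symmetric entourage }U.
\end{equation}
I would moreover note that, since the symmetric entourages form a base of the uniform structure of $X$ (Remark~\ref{rem:sym-entourage}) and each of the maps $U\mapsto\hsep(X,G,\FF,U)$, $U\mapsto\hspa(X,G,\FF,U)$, $U\mapsto\hcov(X,G,U)$ is non-decreasing on $\UU$ (Lemma~\ref{l:h-sep-spa-cov-decrease}), the suprema defining $\hsep(X,G,\FF)$, $\hspa(X,G,\FF)$ in \eqref{e:def-hsep-X-G} and \eqref{d:hspa}, and the corresponding definition of $\hcov(X,G)$, are unchanged if the entourage is restricted to run over the symmetric ones.

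Then I would read off the three comparisons. The middle inequality of \eqref{e:chaine-spa-sep-cov} gives $\hspa(X,G,\FF,V)\le\hsep(X,G,\FF,V)$ for every symmetric $V$, hence $\hspa(X,G,\FF)\le\hsep(X,G,\FF)$ after taking suprema; the concatenation $\hsep(X,G,\FF,V)\le\hspa(X,G,\FF,U)\le\hspa(X,G,\FF)$ gives, on taking the supremum over symmetric $V$, the reverse inequality, so $\hsep(X,G,\FF)=\hspa(X,G,\FF)$. Next, \eqref{e:spa-le-cov} yields $\hspa(X,G,\FF)\le\hcov(X,G)$; conversely, the first inequality of \eqref{e:chaine-spa-sep-cov} together with the second one already gives $\hcov(X,G,V\circ V)\le\hspa(X,G,\FF)$ for every symmetric $V$, and since the family $\{V\circ V:V\text{ symmetric}\}$ is cofinal in $\UU$ (Remark~\ref{rem:sym-entourage}) while $U\mapsto\hcov(X,G,U)$ is non-decreasing, the supremum over symmetric $V$ of the left-hand side equals $\hcov(X,G)$; hence $\hcov(X,G)\le\hspa(X,G,\FF)$. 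Thus $\hspa(X,G,\FF)=\hcov(X,G)$, which with $\hsep(X,G,\FF)=\hspa(X,G,\FF)$ proves \eqref{e:hc-hs-equal}.

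I do not expect a genuine obstacle: the argument is pure bookkeeping on top of Lemma~\ref{l:inegalites-entre-cov-sep-spa-h}. The one point I would double-check is the direction of monotonicity — a \emph{smaller} entourage makes more subsets $(F,U)$-separated, fewer subsets $(F,U)$-spanning and fewer subsets usable in an $(F,U)$-cover, so $\sep$, $\spa$, $\cov$, and therefore $\hsep$, $\hspa$, $\hcov$, \emph{grow} as the entourage shrinks, i.e.\ are non-decreasing for the reverse-inclusion order on $\UU$ — together with the verification that $\{V:V\text{ symmetric}\}$ and $\{V\circ V:V\text{ symmetric}\}$ are cofinal in $\UU$, which is exactly what renders the suprema in \eqref{e:def-hsep-X-G}, \eqref{d:hspa} and the definition of $\hcov(X,G)$ insensitive to these restrictions. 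No compactness or amenability input beyond what is already packaged in Lemma~\ref{l:inegalites-entre-cov-sep-spa-h} and the definition of $\hcov(X,G)$ is required.
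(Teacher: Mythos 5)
Your proof is correct and follows essentially the same route as the paper, which simply takes limits (i.e.\ suprema, by monotonicity) in $\UU$ of the chain of inequalities in Lemma~\ref{l:inegalites-entre-cov-sep-spa-h}. Your additional bookkeeping — checking that symmetric entourages and the sets $V\circ V$ with $V$ symmetric are cofinal in $\UU$, and invoking Lemma~\ref{l:h-sep-spa-cov-decrease} so the restricted suprema agree with the full ones — just makes explicit what the paper leaves implicit.
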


\begin{proof}
Equalities~\eqref{e:hc-hs-equal} follow from~\eqref{e:relations-spa-sep-cov-h}
after taking limits in $\UU$.
\end{proof}

We deduce from Lemma~\ref{l:hc-hs-equal} that
$\hsep(X,G,\FF)$ and $\hspa(X,G,\FF)$ do not depend on the choice of the left F\o lner net $\FF$.
In the sequel, we shall simply write
$\hsep(X,G)$ instead of $\hsep(X,G,\FF)$ and $\hspa(X,G)$ instead of $\hspa(X,G)$.

\begin{theorem}
\label{t:top-ent-uni}
Let $X$ be a compact uniform space equipped with a uniformly continuous action of an amenable group $G$. Then one has
\begin{equation}
\label{e:htop-equals-huni}
\htop(X,G) = 
\hsep(X,G)= 
\hspa(X,G)= \hcov(X,G).
\end{equation}
\end{theorem}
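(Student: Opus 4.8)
The plan is to prove Theorem~\ref{t:top-ent-uni} by establishing two chains of inequalities: one relating the cover-based quantity $\hcov(X,G)$ to the open-cover entropy $\htop(X,G)$, and one going back. By Lemma~\ref{l:hc-hs-equal} it suffices to show $\htop(X,G) = \hcov(X,G)$, since $\hsep$ and $\hspa$ coincide with $\hcov$.

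\textit{Step 1: $\htop(X,G) \leq \hcov(X,G)$.} Fix a finite open cover $\alpha$ of $X$. By the Lebesgue-type property of compact uniform spaces, there is an entourage $U$ of $X$ such that $\alpha$ is a refinement of the cover $\{U[x] : x \in X\}$ — more precisely, so that for every $x \in X$ the set $U[x]$ is contained in some member of $\alpha$. (This follows by a standard compactness argument: for each $x$ pick $A_x \in \alpha$ with $x \in A_x$ and an entourage $V_x$ with $V_x \circ V_x \subset$ some fixed entourage witnessing $V_x[x] \subset A_x$; intersect finitely many such $V_x$ over a finite subcover.) Now given any finite $(F,U)$-cover $\beta$ of $X$ of minimal cardinality $\cov(X,G,F,U)$, I claim $\beta^{(F)}$ — wait, more directly: any $(F,U)$-cover $\beta$ has the property that each $B \in \beta$ satisfies $B \subset U^{(F)}[x]$ for any $x \in B$, hence for each $g \in F$, $gB \subset U[gx]$, which lies in some member of $\alpha$; thus $g^{-1}(\text{member of }\alpha) \supset B$ for a suitable choice, so $\beta$ refines $\alpha^{(F)}$. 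Therefore $N(\alpha^{(F)}) \leq |\beta| = \cov(X,G,F,U)$. Dividing by $|F_j|$ along the F\o lner net and taking the limit gives $\htop(X,G,\alpha) \leq \hcov(X,G,U) \leq \hcov(X,G)$; taking the supremum over $\alpha$ yields $\htop(X,G) \leq \hcov(X,G)$.

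\textit{Step 2: $\hcov(X,G) \leq \htop(X,G)$.} Fix an entourage $U$ of $X$. By Lemma~\ref{l:finite-covering} (applied with a symmetric $V$ with $V \circ V \subset U$), there is a finite cover of $X$ each of whose members $A$ satisfies $A \times A \subset V$, hence $\overline{A} \times \overline{A} \subset \overline{V} \subset U$ if we pass to a slightly larger entourage; by shrinking we may assume each $A$ is open and $A \times A \subset U$, so $\alpha \coloneqq \{A_1,\dots,A_m\}$ is a finite open cover that is also a $(\{1_G\},U)$-cover. For any finite $F \subset G$, the cover $\alpha^{(F)} = \bigvee_{g \in F} g^{-1}\alpha$ is then an $(F,U)$-cover: if $x,y \in \bigcap_{g \in F} g^{-1}A_{i_g}$ then $gx,gy \in A_{i_g}$, so $(gx,gy) \in U$ for all $g \in F$, i.e. $(x,y) \in U^{(F)}$. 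Any minimal subcover of $\alpha^{(F)}$ is still an $(F,U)$-cover, so $\cov(X,G,F,U) \leq N(\alpha^{(F)})$. Dividing by $|F_j|$ and passing to the limit gives $\hcov(X,G,U) \leq \htop(X,G,\alpha) \leq \htop(X,G)$; taking the supremum over $U$ yields $\hcov(X,G) \leq \htop(X,G)$.

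Combining Steps 1 and 2 gives $\htop(X,G) = \hcov(X,G)$, and Lemma~\ref{l:hc-hs-equal} completes the proof. The main obstacle — really the only subtle point — is the interplay between open covers and $(F,U)$-covers: the $(F,U)$-cover members produced by Lemma~\ref{l:finite-covering} are open (being of the form $V^{(F)}[z]$ with $V$ open), so Step 2 is clean; in Step 1 the delicate part is arranging the entourage $U$ so that the entourage-cover $\{U[x]\}$ refines the given open cover $\alpha$, which is the compact-uniform-space analogue of the Lebesgue number lemma and must be argued via finite subcovers and composition of entourages. All limits along the F\o lner net are legitimate because, by the remarks preceding the theorem, $\hcov(X,G,U)$ is a genuine limit (Ornstein--Weiss), while $\htop(X,G,\alpha)$ is a genuine limit by Lemma~\ref{l:h-foc-subadd} and Theorem~\ref{t:OW}, so no $\limsup/\liminf$ bookkeeping is needed beyond monotonicity in $U$ and $\alpha$.
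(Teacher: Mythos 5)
Your proof is correct and follows essentially the same route as the paper: one direction via a Lebesgue entourage for the open cover $\alpha$ (Lemma~\ref{l:Lebesgue-covering}), the other via a finite open cover whose members are $U$-small, and then Lemma~\ref{l:hc-hs-equal} to identify the three uniform quantities. The only (harmless) difference is that you compare $N(\alpha^{(F)})$ with $\cov(X,G,F,U)$ in both directions, whereas the paper mediates through $\spa$ (Lemma~\ref{l:N-sep-spa}) and $\sep$ (Lemma~\ref{l:sep-less-N-alpha-F}) respectively.
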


For the proof, we shall need some auxiliary results.
The first one is a uniform version of Lebesgue's covering lemma 
(see e.g.~\cite[Theorem 0.20]{walters-ergodic}).

\begin{lemma}
\label{l:Lebesgue-covering}
Let $X$ be a compact uniform space. Let $\alpha$ be an open cover of $X$. Then there exists an entourage $U = U(\alpha)$ of $X$
such that for every $x \in X$ there exists $A \in \alpha$ such that $U[x] \subset A$.
\end{lemma}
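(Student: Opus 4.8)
The plan is to prove the uniform version of the Lebesgue covering lemma by a standard compactness argument, adapting the classical metric-space proof to the uniform setting. First I would note that since each $A \in \alpha$ is open and $\alpha$ covers $X$, for each point $x \in X$ there is some $A_x \in \alpha$ with $x \in A_x$, and since the topology on $X$ is the one induced by its uniform structure, there is an entourage $V_x$ of $X$ such that $V_x[x] \subset A_x$. Using Remark~\ref{rem:sym-entourage}, I can shrink $V_x$ and assume it is symmetric and satisfies $V_x \circ V_x \subset W_x$ for some entourage $W_x$ with $W_x[x] \subset A_x$; more directly, pick a symmetric entourage $V_x$ with $V_x \circ V_x \subset (\text{the entourage witnessing } V_x[x]\subset A_x)$, so that $V_x \circ V_x [x] \subset A_x$.

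Next I would invoke compactness: the sets $V_x[x]$, as $x$ ranges over $X$, form an open cover of $X$ (each $V_x[x]$ is a neighborhood of $x$), so there is a finite subset $\{x_1,\dots,x_n\} \subset X$ with $X = \bigcup_{i=1}^n V_{x_i}[x_i]$. I then set $U \coloneqq \bigcap_{i=1}^n V_{x_i}$, which is an entourage of $X$ by (UNI-3), and I may also intersect with a symmetric entourage to keep $U$ symmetric, though symmetry of $U$ is not strictly needed. The claim is that this $U$ works. Given any $x \in X$, choose $i$ with $x \in V_{x_i}[x_i]$, i.e.\ $(x_i, x) \in V_{x_i}$. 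For any $y \in U[x]$ we have $(x,y) \in U \subset V_{x_i}$, hence $(x_i, y) \in V_{x_i} \circ V_{x_i}$, so $y \in (V_{x_i} \circ V_{x_i})[x_i] \subset A_{x_i}$. Thus $U[x] \subset A_{x_i} \in \alpha$, as required.

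I do not expect a serious obstacle here; the only point requiring a little care is the bookkeeping with the composition of entourages and making sure the chosen symmetric $V_x$ genuinely gives $(V_x \circ V_x)[x_i] \subset A_{x_i}$ rather than just $V_x[x_i] \subset A_{x_i}$ — this is exactly why one passes to a ``half-size'' entourage via (UNI-5) and Remark~\ref{rem:sym-entourage} before taking the finite intersection. One should also be slightly careful that $V_x[x]$ is indeed a neighborhood of $x$ (true by definition of the induced topology) so that the family $\{V_x[x]\}_{x \in X}$ is a genuine open cover to which compactness of $X$ applies; replacing each $V_x[x]$ by its interior if one wants literal open sets is harmless. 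Everything else is the same finite-subcover manipulation as in the classical Lebesgue number lemma, with metric balls replaced by entourage sections.
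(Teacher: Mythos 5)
Your proof is correct and follows essentially the same route as the paper's: cover $X$ by sections $V_x[x]$ of ``half-size'' symmetric entourages satisfying $(V_x \circ V_x)[x] \subset A_x$, extract a finite subcover by compactness, and take $U$ to be the finite intersection of the corresponding entourages. The only difference is notational (the paper keeps separate names $V_x$ and $W_x$ for the original and halved entourages), and your handling of the composition and of the neighborhood-versus-open issue matches the paper's argument.
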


Such an entourage $U$  is then called a \emph{Lebesgue entourage} for the open cover $\alpha$.

\begin{proof}
Since $\alpha$ is an open cover of $X$, there exists, for every point $x \in X$, 
an open subset $A_x \in \alpha$ and an entourage $V_x$ of $X$ such that
$V_x[x] \subset A_x$.
Choose, for each $x \in X$, an entourage $W_x$ such that $W_x \circ W_x \subset V_x$.
By compactness of $X$, there exists a finite subset $Y \subset X$ such that the sets $W_y[y]$, $y \in Y$,  cover $X$.
Consider the entourage $U  \coloneqq \bigcap_{y \in Y} W_y$.
Then, for every $x \in X$, there is a point $y \in Y$ such that $x \in W_y[y]$.
 This implies
 \[ 
 U[x] \subset (U \circ W_y)[y] \subset (W_y \circ W_y)[y] \subset V_y[y] \subset A_y.
 \]
 Thus the entourage $U$ has the required property. 
\end{proof}

\begin{lemma}
\label{l:N-sep-spa}
Let $X$ be a compact uniform  space equipped with a uniformly  continuous action of a group $G$. 
Let $\alpha$ be a finite  open cover of $X$
and let $F$ be a finite subset of $G$.
Suppose that  $U$ is a Lebesgue entourage for $\alpha$.
 Then one has
\begin{equation*}
\label{e:N-alpha-F-spa}
N(\alpha^{(F)}) \leq \spa(X,G,F,U).
\end{equation*}
\end{lemma}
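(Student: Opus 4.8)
The plan is to turn a minimal $(F,U)$-spanning subset of $X$ directly into a subcover of $\alpha^{(F)}$ of no larger cardinality. Concretely, I would first invoke Lemma~\ref{l:finite-spanning} and the definition~\eqref{e:def-spa} to pick an $(F,U)$-spanning subset $Z \subset X$ with $|Z| = \spa(X,G,F,U)$. Then, for each $z \in Z$ and each $g \in F$, I would apply the Lebesgue property of $U$ to the \emph{translated} point $g z \in X$: there exists a member $A_{z,g} \in \alpha$ with $U[g z] \subset A_{z,g}$. Pulling back and intersecting over $g \in F$, I set $B_z \coloneqq \bigcap_{g \in F} g^{-1} A_{z,g}$, which is by construction a member of $\alpha^{(F)} = \bigvee_{g \in F} g^{-1} \alpha$.

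The second step is to check that $\{B_z : z \in Z\}$ covers $X$. Given $x \in X$, the $(F,U)$-spanning property furnishes $z \in Z$ with $(g z, g x) \in U$ for all $g \in F$, that is, $g x \in U[g z]$ for every $g \in F$. Combined with $U[g z] \subset A_{z,g}$ this yields $g x \in A_{z,g}$, hence $x \in g^{-1} A_{z,g}$, for every $g \in F$; therefore $x \in B_z$. Thus $\{B_z : z \in Z\}$ is a subcover of $\alpha^{(F)}$, whence $N(\alpha^{(F)}) \leq |\{B_z : z \in Z\}| \leq |Z| = \spa(X,G,F,U)$, which is the claimed inequality.

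I expect the only point needing care to be the translation bookkeeping: the Lebesgue entourage is used not at the points of $Z$ themselves but at each translate $g z$, and the resulting members of $\alpha$ must be pulled back by $g^{-1}$ and intersected over $g \in F$ in order to land inside $\alpha^{(F)}$. Once this is set up, the matching is exact — the inclusions $g x \in U[g z] \subset A_{z,g}$ are precisely what is needed to place $x$ in $B_z$ — so no additional ingredient (not even compactness beyond the existence of a finite spanning set, and certainly not amenability of $G$) enters the argument.
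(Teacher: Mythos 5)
Your proof is correct and is essentially identical to the paper's own argument: both take a minimal $(F,U)$-spanning set $Z$, apply the Lebesgue property at each translate $gz$ to get $A_{z,g} \in \alpha$ with $U[gz] \subset A_{z,g}$, and check that the sets $\bigcap_{g \in F} g^{-1}A_{z,g}$ form a subcover of $\alpha^{(F)}$ indexed by $Z$. The translation bookkeeping you flag as the delicate point is handled exactly as in the paper.
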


\begin{proof}
Let $Z \subset X$ be an $(F,U)$-spanning subset with minimal cardinality. 
Since $U$ is a Lebesgue entourage for $\alpha$, given $z \in Z$ and $g \in F$,  we can find   an open set $A_{z,g} \in \alpha$ 
such that $U[gz] \subset A_{z,g}$. 
As $Z$ is $(F,U)$-spanning, given any point $x \in X$ we can find $z = z(x) \in Z$ 
such that $(gz,gx) \in U$ for all $g \in F$. 
This implies $gx \in U[gz] \subset A_{z,g}$ for all $g \in F$, that is,
$x \in \bigcap_{g \in F} g^{-1}A_{z,g}$. 
This shows that $\{\bigcap_{g \in F}  g^{-1} A_{z,g}: z \in Z\}$ is a subcover of $\alpha^{(F)}$.
Therefore $N(\alpha^{(F)}) \leq |Z| = \spa(X,G,F,U)$.
\end{proof}

\begin{lemma}
\label{l:sep-less-N-alpha-F}
Let $X$ be a compact uniform  space equipped with a uniformly  continuous action of a group $G$.
Let $U$ and $V$ be entourages of $X$ with $V$ symmetric and $V \circ V \subset U$. 
Let $\alpha$ be a finite  open cover of $X$
and let $F$ be a finite subset of $G$.
Suppose that for every $A \in \alpha$, there exists $x \in X$ such that $A \subset V[x]$. 
 Then one has
 \begin{equation*}
 \sep(X,G,F,U) \leq N(\alpha^{(F)}).
 \end{equation*}
\end{lemma}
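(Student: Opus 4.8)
\emph{Proof proposal.} The plan is to use a minimal subcover of $\alpha^{(F)}$ as an upper bound for the size of any $(F,U)$-separated set, via an injective assignment. Concretely, I would fix an $(F,U)$-separated subset $Z \subset X$ (of maximal cardinality, which exists by Lemma~\ref{l:sep-finite}) together with a subcover $\beta \subset \alpha^{(F)}$ with $|\beta| = N(\alpha^{(F)})$. Since $\beta$ covers $X$, one may choose, for each $z \in Z$, a member $B_z \in \beta$ with $z \in B_z$. The whole argument then reduces to showing that the map $z \mapsto B_z$ is injective, for then $\sep(X,G,F,U) = |Z| \leq |\beta| = N(\alpha^{(F)})$.

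To prove injectivity, suppose $z_1, z_2 \in Z$ satisfy $B_{z_1} = B_{z_2} =: B$. Being a member of $\alpha^{(F)} = \bigvee_{g \in F} g^{-1}\alpha$, the set $B$ has the form $B = \bigcap_{g \in F} g^{-1} A_g$ for suitable $A_g \in \alpha$; in particular $g B \subset A_g$ for every $g \in F$. Fix $g \in F$. Then $g z_1, g z_2 \in g B \subset A_g$, and by hypothesis there is a point $x_g \in X$ with $A_g \subset V[x_g]$, so $(x_g, g z_1) \in V$ and $(x_g, g z_2) \in V$. Using that $V$ is symmetric, $(g z_1, x_g) \in V$, whence $(g z_1, g z_2) \in V \circ V \subset U$ by the composition convention. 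As $g \in F$ was arbitrary, $(z_1, z_2) \in \bigcap_{g \in F} g^{-1}U = U^{(F)}$, and since $Z$ is $(F,U)$-separated this forces $z_1 = z_2$. Hence the assignment is injective, and the desired inequality follows.

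That is essentially the entire argument, so I do not anticipate any genuine obstacle; the only point requiring care is purely bookkeeping — getting the direction of the entourage composition $V \circ V$ right, invoking the symmetry of $V$ correctly, and recalling that every member of the join $\alpha^{(F)}$ is a finite intersection of translates of members of $\alpha$. It may be worth noting that this estimate is the counterpart, in the reverse direction, of Lemma~\ref{l:N-sep-spa}, the two bounds together being what feeds into the proof of Theorem~\ref{t:top-ent-uni}.
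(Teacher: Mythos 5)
Your argument is correct and is essentially the paper's proof: the paper likewise observes that each member of $\alpha^{(F)}$ can contain at most one point of a maximal $(F,U)$-separated set $Z$ (which is exactly your injectivity claim, proved the same way via $gB\subset A_g\subset V[x_g]$ and $V\circ V\subset U$), and concludes $|Z|\leq N(\alpha^{(F)})$. The bookkeeping with the composition convention and the symmetry of $V$ is handled correctly.
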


\begin{proof}
Let  $Z \subset X$ be an $(F,U)$-separated subset with maximal cardinality.
It follows from the hypotheses that each open set in $\alpha^{(F)}$ can contain at most one point of $Z$.
Therefore
$\sep(X,G,F,U) = |Z| \leq N(\alpha^{(F)})$.   
\end{proof}

\begin{proof}[Proof of Theorem~\ref{t:top-ent-uni}]
Let $\FF = (F_j)_{j \in J}$ be a left F\o lner net for $G$.
\par
Let $\alpha$ be a finite open cover of $X$.
Choose a Lebesgue entourage $U$ for $G$.
By Lemma~\ref{l:N-sep-spa}, we have
\[
N(\alpha^{(F_j)}) \leq \spa(X,G,F_j,U)
\]
for all $j \in J$.
It follows that
\begin{align*}
\htop(X,G,\alpha) &= \lim_{j \in J} \frac{\log N(\alpha^{(F_j)})}{|F_j|} \\
&\leq \limsup_{j \in J} \frac{\log \spa(X,G,F_j,U)}{|F_j|} \\ 
&= \hspa(X,G,\FF,U) \\ 
&\leq \hspa(X,G) 
\end{align*}
and hence
\begin{equation}
\label{e:htop-less-hspa}
\htop(X,G) = \sup_{\alpha} \htop(X,G,\alpha) \leq \hspa(X,G).
\end{equation}
\par
Let now $U$ be an entourage of $X$.
Choose a symmetric entourage $V$ of $X$ such that $V \circ V \subset U$.
For each $x \in X$, there is an open neighborhood $A_x$ of $x$ such that $A_x \subset V[x]$.
By compactness of $X$, we can find a finite subset $K \subset X$ such that $\alpha \coloneqq \{A_x : x \in K\}$ covers $X$.
Lemma~\ref{l:sep-less-N-alpha-F} gives us
\[
\sep(X,G,F_j,U) \leq N(\alpha^{(F_j)})
\]
for all $j \in J$.
Therefore
\begin{multline*}
\hsep(X,G,\FF,U) = \limsup_{j \in J} \frac{\log \sep(X,G,F_j,U)}{|F_j|}\\
\leq \lim_{j \in J} \frac{\log N(\alpha^{(F_j)})}{|F_j|} =
\htop(X,G,\alpha) \leq \htop(X,G),
\end{multline*}
and hence
\begin{equation}
\label{l:hsep-less-htop}
\hsep(X,G) = \sup_{U \in \UU} \hsep(X,G,\FF,U) \leq \htop(X,G).
\end{equation}
Combining Lemma~\ref{l:hc-hs-equal} with inequalities~\eqref{e:htop-less-hspa} and~\eqref{l:hsep-less-htop},
we finally get~\eqref{e:htop-equals-huni}. 
\end{proof}

\subsection{Topological entropy and expansiveness}

\begin{theorem}
\label{t:entropy-expansive}
Let $X$ be a compact Hausdorff  space equipped with an expansive continuous  action of an amenable group $G$. 
Let $U_0$ be a closed expansiveness entourage for $(X,G)$.
Then one has
\begin{equation}
\label{e:entropie-expansive}
\htop(X,G)  =  \hcov(X,G,U_0).
\end{equation} 
\end{theorem}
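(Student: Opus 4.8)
The plan is to derive the identity from Theorem~\ref{t:top-ent-uni}, which already provides $\htop(X,G) = \hcov(X,G) = \sup_{U \in \UU} \hcov(X,G,U)$, by showing that this supremum is attained at the expansiveness entourage $U_0$. Since $\hcov(X,G,U_0) \le \sup_{U \in \UU} \hcov(X,G,U)$ holds trivially, everything reduces to the reverse inequality, and for that it suffices to check that $\hcov(X,G,U) \le \hcov(X,G,U_0)$ for an arbitrary entourage $U$ of $X$.

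So fix an entourage $U$. Applying Lemma~\ref{l:expansive-base-entourages} to $U$ and then replacing the finite set it provides by its union with $\{1_G\}$ (which only makes $U_0^{(E)}$ smaller, hence still contained in $U$), we obtain a finite subset $E \subset G$ with $1_G \in E$ and $U_0^{(E)} \subset U$. For any finite $F \subset G$, a direct computation with the defining intersections gives
\[
U_0^{(EF)} = \bigcap_{f \in F} f^{-1}\bigl(U_0^{(E)}\bigr) \subset \bigcap_{f \in F} f^{-1} U = U^{(F)},
\]
so every $(EF,U_0)$-cover of $X$ is in particular an $(F,U)$-cover, whence
\[
\cov(X,G,F,U) \le \cov(X,G,EF,U_0) \qquad \text{for every finite } F \subset G.
\]

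Now fix a left F\o lner net $\FF = (F_j)_{j \in J}$ for $G$. Since the F\o lner condition \eqref{e:def-left-Folner-net} asserts precisely that $|hF_j \setminus F_j|/|F_j| \to 0$ for each fixed $h \in G$, applying it to the elements of $E$, together with $F_j \subset EF_j$, shows that $|EF_j| = |F_j| + |EF_j \setminus F_j|$ with $|EF_j \setminus F_j| \le \sum_{e \in E} |eF_j \setminus F_j| = o(|F_j|)$, so $|EF_j|/|F_j| \to 1$; and applying it to the elements of $gE$ for each $g \in G$ shows that $(EF_j)_{j \in J}$ is again a left F\o lner net for $G$. Because the limit defining $\hcov(X,G,U_0)$ in \eqref{e:def-hcov} is computed with respect to an arbitrary left F\o lner net (Theorem~\ref{t:OW}), we may use $(EF_j)$ for it, and then the cardinality inequality above yields
\[
\hcov(X,G,U) = \lim_{j} \frac{\log \cov(X,G,F_j,U)}{|F_j|} \le \lim_{j} \frac{\log \cov(X,G,EF_j,U_0)}{|EF_j|} \cdot \frac{|EF_j|}{|F_j|} = \hcov(X,G,U_0).
\]
Taking the supremum over all entourages $U$ gives $\hcov(X,G) = \sup_{U \in \UU} \hcov(X,G,U) \le \hcov(X,G,U_0)$; combined with the trivial reverse inequality and Theorem~\ref{t:top-ent-uni}, this yields $\htop(X,G) = \hcov(X,G,U_0)$.

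I expect no serious obstacle once Lemma~\ref{l:expansive-base-entourages} is available; the one place demanding care is the combinatorial bookkeeping in the last paragraph — confirming that left-translating a F\o lner net by a fixed finite set preserves the F\o lner property and leaves the asymptotic cardinality unchanged — together with the (purely notational but essential) observation that the subadditive, right-invariant function $F \mapsto \log \cov(X,G,F,U_0)$ invoked here is exactly the one whose Ornstein--Weiss limit defines $\hcov(X,G,U_0)$, so that the value of the limit is genuinely independent of which F\o lner net realizes it.
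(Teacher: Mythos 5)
Your proof is correct and follows essentially the same route as the paper: Lemma~\ref{l:expansive-base-entourages} produces a finite $E$ with $U_0^{(E)} \subset U$, covers over $(F,U)$ are compared with covers over $(EF,U_0)$, and the conclusion follows from the F\o lner properties of $(EF_j)_{j\in J}$ together with the Ornstein--Weiss net-independence of $\hcov(X,G,U_0)$, exactly as in the paper (which packages the F\o lner facts in Lemma~\ref{l:finite-times-folner} and uses the intermediate entourage $U_0^{(E)}$ with the monotonicity Lemma~\ref{l:h-sep-spa-cov-decrease}, while you pass directly to the inequality $\cov(X,G,F,U)\le\cov(X,G,EF,U_0)$ and normalize $E$ to contain $1_G$). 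These are only cosmetic differences, so no further comment is needed.
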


In the proof of Theorem~\ref{t:entropy-expansive}, we shall use the following result.

\begin{lemma}
\label{l:finite-times-folner}
Let $G$ be an amenable group and let $(F_j)_{j \in J}$ be a left F\o lner net for $G$.
Suppose that $E \subset G$ is a nonempty finite subset.
Then the following hold:
\begin{enumerate}[\rm (i)]
\item
$\lim_{j \in J} \frac{|E F_j \setminus F_j|}{|F_j|} = 0$.
\item
$\lim_{j \in J} \frac{|E F_j |}{|F_j|} = 1$;
\item
the net $(E F_j)_{j \in J}$ is a left F\o lner net for $G$.
\end{enumerate}
\end{lemma}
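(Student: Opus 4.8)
The plan is to prove the three statements of Lemma~\ref{l:finite-times-folner} in the order (i) $\Rightarrow$ (ii) $\Rightarrow$ (iii), since each follows quickly from the preceding one together with the F\o lner property~\eqref{e:def-left-Folner-net}.

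\textit{Proof of (i).} First I would write $E = \{g_1, \dots, g_m\}$ and observe that
\[
E F_j \setminus F_j = \left( \bigcup_{i=1}^m g_i F_j \right) \setminus F_j = \bigcup_{i=1}^m (g_i F_j \setminus F_j),
\]
so that by subadditivity of cardinality
\[
\frac{|E F_j \setminus F_j|}{|F_j|} \leq \sum_{i=1}^m \frac{|g_i F_j \setminus F_j|}{|F_j|}.
\]
Each summand tends to $0$ along $J$ by the defining property~\eqref{e:def-left-Folner-net} of a left F\o lner net, and a finite sum of nets each converging to $0$ converges to $0$; hence the left-hand side tends to $0$.

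\textit{Proof of (ii).} Since $E$ is nonempty, pick any $e \in E$; then $e F_j \subset E F_j$, so $|F_j| = |e F_j| \leq |E F_j|$, giving $|E F_j|/|F_j| \geq 1$. For the reverse, $E F_j \subset F_j \cup (E F_j \setminus F_j)$, so $|E F_j| \leq |F_j| + |E F_j \setminus F_j|$, whence
\[
1 \leq \frac{|E F_j|}{|F_j|} \leq 1 + \frac{|E F_j \setminus F_j|}{|F_j|},
\]
and the right-hand side tends to $1$ by part~(i). The squeeze gives the claim.

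\textit{Proof of (iii).} I must check the F\o lner condition~\eqref{e:def-left-Folner-net} for the net $(E F_j)_{j \in J}$: namely, for every $g \in G$, $|g E F_j \setminus E F_j| / |E F_j| \to 0$. I would bound the numerator by $|(\{g\} \cup E) E F_j \setminus E F_j| \leq |(\{g\} \cup E) F_j \setminus F_j| + |E F_j \setminus F_j|$ — more directly, note $g E F_j \setminus E F_j \subset (gE \cup E) F_j \setminus F_j$ together with $F_j \setminus E F_j$, but the cleanest route is: set $E' = \{g\} \cup E$, a nonempty finite set, and observe $g E F_j \subset E' E F_j$ while $E F_j \supset F_j$ up to the small error from~(i), so $g E F_j \setminus E F_j \subset (E' F_j \setminus F_j) \cup (F_j \setminus E F_j) = (E' F_j \setminus F_j)$ since $F_j \subset E F_j$ requires $1_G \in E$, which need not hold. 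To avoid this subtlety I would instead argue: $|g E F_j \setminus E F_j| \leq |g E F_j \setminus F_j| + |F_j \setminus E F_j|$; the first term is at most $|(gE) F_j \setminus F_j| \to 0$ by~(i) applied to the finite set $gE$, and for the second term $F_j \setminus E F_j \subset (F_j \setminus e F_j) \subset (e F_j \triangle F_j)$ for a fixed $e \in E$...

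The main obstacle is precisely this last bookkeeping: one does not have $F_j \subseteq E F_j$ in general, so controlling $|F_j \setminus E F_j|$ needs a small argument. The clean fix is to note that for any fixed $e \in E$, $F_j \setminus E F_j \subseteq F_j \setminus e F_j = (e^{-1} \cdot)(e F_j \setminus F_j \text{'s image})$; concretely $|F_j \setminus e F_j| = |e^{-1} F_j \setminus F_j| \to 0$ by~\eqref{e:def-left-Folner-net} with $g = e^{-1}$. Combining, $|g E F_j \setminus E F_j| \leq |(gE) F_j \setminus F_j| + |F_j \setminus e F_j| \to 0$, while $|E F_j| \geq |F_j|$, so the ratio tends to $0$, proving~(iii). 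All steps after~(i) are elementary set-theoretic manipulations with cardinalities of finite sets, so~(i) — really just subadditivity plus the definition — carries all the weight.
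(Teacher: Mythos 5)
Your proof is correct, and parts (i) and (ii) are essentially the paper's argument ((ii) via a squeeze $1 \leq |EF_j|/|F_j| \leq 1 + |EF_j\setminus F_j|/|F_j|$ rather than the paper's identity $|EF_j| = |F_j| + |EF_j\setminus F_j| - |F_j\setminus EF_j|$ — an immaterial difference). In (iii) you correctly identify the one genuine subtlety, namely that $F_j \subseteq EF_j$ may fail when $1_G \notin E$, and you resolve it with a two-term split $gEF_j \setminus EF_j \subseteq (gEF_j \setminus F_j) \cup (F_j \setminus eF_j)$, bounding the first piece by (i) applied to the finite set $gE$ and the second by $|F_j \setminus eF_j| = |e^{-1}F_j \setminus F_j|$ and the F\o lner condition \eqref{e:def-left-Folner-net} at $g = e^{-1}$, then dividing by $|EF_j| \geq |F_j|$. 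The paper handles the same subtlety in one stroke: fixing $e \in E$, it writes $|gEF_j \setminus EF_j| = |e^{-1}gEF_j \setminus e^{-1}EF_j| \leq |e^{-1}gEF_j \setminus F_j|$ (using $F_j \subseteq e^{-1}EF_j$) and applies (i) once to $e^{-1}gE$, then converts the denominator via (ii). Both routes rest on the same translation-by-$e^{-1}$ idea, so the difference is organizational; your version avoids invoking (ii) at all, while the paper's avoids the two-term bookkeeping. Two small cosmetic points: the clause ``$|gEF_j\setminus EF_j| \leq |(gE)F_j\setminus F_j| + |F_j\setminus eF_j| \to 0$'' should be stated for the ratios over $|F_j|$ (the numerators themselves need not tend to $0$), which is clearly what you intend in the next clause; and the exploratory false starts in (iii) should be deleted from a final write-up, keeping only the argument that works.
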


\begin{proof}
As $E F_j \setminus F_j = \bigcup_{g \in E} (g F_j \setminus F_j)$, we have
\[
|E F_j \setminus F_j| = \left| \bigcup_{g \in E} (g F_j \setminus F_j) \right| 
\leq \sum_{g \in E} | g F_j \setminus F_j|,
\]
so that Assertion~(i) follows from~\eqref{e:def-left-Folner-net}.
\par
As the set $E$ is nonempty, we have $|E F_j| \geq |F_j|$ and hence $|E F_j \setminus F_j| \geq |F_j \setminus E F_j|$.
Therefore, we deduce from (i) that
\[
\lim_{j \in J} \frac{| F_j \setminus E F_j|}{|F_j|} = 0.
\]
Now, using $|E F_j| = |F_j| + |E F_j \setminus F_j|  - |F_j \setminus E F_j|$,
we get
\[
\lim_{j \in J} \frac{|E F_j |}{|F_j|} = 1 + \lim_{j \in J} \frac{|E F_j  \setminus F_j|}{|F_j|}  
- \lim_{j \in J} \frac{| F_j \setminus E F_j|}{|F_j|} = 1,
\]
which gives  (ii).
\par   
Let $g \in G$ and
fix some arbitrary element $e \in E$. 
By applying (i)
with $E$ replaced by $e^{-1} g E$, we obtain
\begin{equation}
\label{e:lim-einv-gE-Fj}
\lim_{j \in J} \frac{|e^{-1} g E F_j \setminus F_j|}{|F_j|} = 0. 
\end{equation}
Now observe that
\[
|g E F_j \setminus E F_j| = |e^{-1} (g E F_j \setminus E F_j)| = |e^{-1} g E F_j \setminus e^{-1} E F_j | \leq |e^{-1} g E F_j \setminus F_j|,
\]
where the last inequality follows from the fact that
$F_j \subset  e^{-1} E F_j$.
Thus, we deduce from~\eqref{e:lim-einv-gE-Fj} that
\begin{equation*}
\lim_{j \in J} \frac{| g E F_j \setminus E F_j|}{|F_j|} = 0. 
\end{equation*}
Using (ii), this gives us
\[
\lim_{j \in J} \frac{|g E F_j \setminus E F_j|}{|E  F_j|}
= \lim_{j \in J} \left( \frac{|E  F_j|}{|  F_j|} \right)^{-1} \cdot  \frac{|g E F_j \setminus E F_j|}{|F_j|}
= 0,  
\]  
which shows (iii).
\end{proof} 

\begin{proof}[Proof of Theorem~\ref{t:entropy-expansive}] 
Let $U$ be an entourage of $X$. 
By Lemma~\ref{l:expansive-base-entourages},
there exists a nonempty finite subset $E \subset G$
such that $U_0^{(E)} \coloneqq \bigcap_{g \in E} g^{-1} U_0 \subset U$.
\par
Observe that if $F \subset G$ is a finite subset,
then a cover $\alpha$ of $X$ is an $(F,U_0^{(E)})$-cover
if and only if $\alpha$ is an $(EF,U_0)$-cover.
Therefore
\begin{equation}
\label{e:cov-F-EF}
\cov(X,G,F,U_0^{(E)}) = \cov(X,G,E F,U_0).
\end{equation}
 Let now $ (F_j)_{j \in J}$ be a left-Folner net for $G$. 
Then we have
\begin{align*}
\hcov(X,G,U) &\leq \hcov(X,G,U_0^{(E)})
&&\text{(by Lemma~\ref{l:h-sep-spa-cov-decrease})} \\
&= \lim_{j \in J} \frac{\log \cov(X,G,F_j,U_0^{(E)})}{|F_j|} 
&&\text{(by~\eqref{e:def-hcov})} \\
&= \lim_{j \in J} \frac{\log \cov(X,G,E F_j,U_0)}{|F_j|} 
&&\text{(by~\eqref{e:cov-F-EF})} \\
&= \lim_{j \in J} \frac{\log \cov(X,G,E F_j,U_0)}{|E F_j|} 
&&\text{(by Lemma~\ref{l:finite-times-folner}.(ii))} \\
&= \hcov(X,G,U_0),
\end{align*}
where the last equality follows from the fact that the net $(E F_j)_{j \in J}$ is a left F\o lner net by Lemma~\ref{l:finite-times-folner}.(iii).
As
\[
\htop(X,G) = \hcov(X,G) = \sup_U \hcov(X,G,U),
\]
by Theorem~\ref{t:top-ent-uni},
this shows that $\htop(X,G) = \hcov(X,G,U_0)$.
\end{proof}

Since $\hcov(X,G,U_0) < \infty$ by the Ornstein-Weiss lemma,
an immediate consequence of  Theorem~\ref{t:entropy-expansive} is the following.

\begin{corollary}
\label{c:ent-expansive-positive}
Let $X$ be a compact Hausdorff  space equipped with an expansive continuous  action of an amenable
group $G$.
Then  $\htop(X,G) < \infty$.
\qed 
\end{corollary}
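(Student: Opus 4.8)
The plan is to derive this directly from Theorem~\ref{t:entropy-expansive} together with the Ornstein-Weiss lemma. First I would observe that $(X,G)$ admits a \emph{closed} expansiveness entourage. Indeed, starting from an arbitrary expansiveness entourage $U_0$ and using that the closed entourages form a base of the (unique) uniform structure of the compact Hausdorff space $X$, one may choose a closed entourage $U_0' \subset U_0$; since for all $x,y \in X$ the condition $(gx,gy) \in U_0'$ for all $g \in G$ forces $(gx,gy) \in U_0$ for all $g \in G$ and hence $x = y$, the entourage $U_0'$ is again an expansiveness entourage. So we may fix a closed expansiveness entourage $U_0$ for $(X,G)$.

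Next, by Theorem~\ref{t:entropy-expansive} applied to this $U_0$, we have $\htop(X,G) = \hcov(X,G,U_0)$. On the other hand, by Lemma~\ref{l:sep-spa-cov-right-inv} and Lemma~\ref{l:cov-sub-multipli}, the map $h \colon \PPf(G) \to \R$ given by $h(F) \coloneqq \log \cov(X,G,F,U_0)$ is right $G$-invariant and subadditive, so the Ornstein-Weiss lemma (Theorem~\ref{t:OW}), in the guise of the very definition~\eqref{e:def-hcov}, guarantees that $\hcov(X,G,U_0) = \lim_{j \in J} h(F_j)/|F_j|$ is a finite nonnegative real number, independent of the chosen left F\o lner net. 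Combining these two facts yields $\htop(X,G) = \hcov(X,G,U_0) < \infty$, as desired.

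I do not expect any genuine obstacle: every step is an invocation of a result already established in the excerpt. The only point that warrants a moment's care is the reduction from an arbitrary expansiveness entourage to a closed one, which is required because Theorem~\ref{t:entropy-expansive} is stated in terms of a closed expansiveness entourage; this reduction is immediate from the normality of $X \times X$ for $X$ compact Hausdorff.
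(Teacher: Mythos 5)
Your proof is correct and follows the paper's own argument: the paper likewise obtains the corollary immediately from Theorem~\ref{t:entropy-expansive} together with the finiteness of $\hcov(X,G,U_0)$ guaranteed by the Ornstein--Weiss lemma. Your extra remark justifying the existence of a closed expansiveness entourage is a valid (and harmless) elaboration of a point the paper leaves implicit.
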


\begin{example}
Let $G$ be an amenable group, $A$ a finite set, and $X \subset A^G$ a subshift.
Equip $X$ with the action of $G$ induced by the $G$-shift on $A^G$.
Then $X$ is compact Hausdorff and $(X,G)$ is expansive.
A closed expansiveness entourage for $(X,G)$ is $U_0 \coloneqq \{(x,y) \in X \times X : x(1_G) = y(1_G)\}$.
Observe that if $F \subset G$ is a finite subset, then
$(x,y) \in U_0^{(F)}$ if and only if $x(g) = y(g)$ for  all $g \in F^{-1}$.
It follows that
\[
\cov(X,G,F,U_0) = |\pi_{F^{-1}}(X)|,
\]
where $\pi_{F^{-1}} \colon A^G \to A^{F^{-1}}$ is the projection map.
We deduce from Theorem~\ref{t:entropy-expansive} that
if $(F_j)_{j \in J}$ is a left F\o lner net for $G$, then 
\[
\htop(X,G) = \lim_{j \in J} \frac{\vert\pi_{F_j^{-1}}(X)\vert}{|F_j|}
\]
(cf.~\cite{book}, \cite[Proposition~2.7]{csc-ijm-expansive}). This gives us $\htop(X,G) = \log |A|$ if $X = A^G$ is the full shift.
\end{example}

\begin{corollary}
\label{c:sep-spa-expansiveness}
Let $X$ be a compact Hausdorff space equipped with an expansive continuous action of an amenable group $G$.
Let $\FF$ be a left F\o lner net for $G$ and let $V_0$ be a closed symmetric entourage of $X$ such that
$V_0 \circ V_0$ is an expansiveness entourage for $(X,G)$.
Then one has
\begin{equation}
\label{e:sep-spa-expansiveness}
\htop(X,G) = \hsep(X,G,\FF,V_0) = \hspa(X,G,\FF,V_0).
\end{equation}
In particular, the quantities $\hsep(X,G,\FF,V_0)$ and $\hspa(X,G,\FF,V_0)$ do not depend on the left F\o lner net $\FF$.
\end{corollary}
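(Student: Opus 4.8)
The plan is to trap the quantities $\hsep(X,G,\FF,V_0)$ and $\hspa(X,G,\FF,V_0)$ inside a chain of inequalities whose two ends both coincide with $\htop(X,G)$, the lower end coming from Theorem~\ref{t:entropy-expansive} and the upper end from Theorem~\ref{t:top-ent-uni}, with the interior supplied by Lemma~\ref{l:inegalites-entre-cov-sep-spa-h}. The first step is to observe that $U_0 \coloneqq V_0 \circ V_0$ is a \emph{closed} entourage of $X$: it is an entourage by (UNI-2) because $V_0 \subset V_0 \circ V_0$ (as $\Delta_X \subset V_0$), and it is closed because $X$ is compact Hausdorff --- indeed, the set $C \coloneqq \{(x,z,y) \in X \times X \times X : (x,z) \in V_0 \text{ and } (z,y) \in V_0\}$ is the intersection of the preimages of the closed set $V_0$ under two continuous coordinate projections, hence closed in the compact space $X \times X \times X$, so its image $V_0 \circ V_0$ under the continuous map $(x,z,y) \mapsto (x,y)$ is compact and therefore closed in the Hausdorff space $X \times X$. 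Since $U_0 = V_0 \circ V_0$ is an expansiveness entourage by hypothesis, Theorem~\ref{t:entropy-expansive} gives $\htop(X,G) = \hcov(X,G,V_0 \circ V_0)$.

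Next, using (UNI-3), (UNI-4), and (UNI-5), I would pick a symmetric entourage $U$ of $X$ with $U \circ U^* = U \circ U \subset V_0$ (choose $W$ with $W \circ W \subset V_0$ and set $U \coloneqq W \cap W^*$). Since $V_0$ is symmetric and $U \circ U^* \subset V_0$, Lemma~\ref{l:inegalites-entre-cov-sep-spa-h} applied to the pair $(U, V_0)$ yields
\[
\hcov(X,G,V_0 \circ V_0) \leq \hspa(X,G,\FF,V_0) \leq \hsep(X,G,\FF,V_0) \leq \hspa(X,G,\FF,U) \leq \hcov(X,G,U).
\]
Finally, $\hcov(X,G,U) \leq \hcov(X,G) = \htop(X,G)$ by the definition of $\hcov(X,G)$ as a supremum together with Theorem~\ref{t:top-ent-uni}. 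Combining this with the equality $\htop(X,G) = \hcov(X,G,V_0 \circ V_0)$ obtained above, all five terms of the displayed chain are equal to $\htop(X,G)$; in particular $\htop(X,G) = \hsep(X,G,\FF,V_0) = \hspa(X,G,\FF,V_0)$. The remaining assertion is then immediate, since $\htop(X,G)$ is defined purely through finite open covers of $X$ with no reference to any F\o lner net.

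As every ingredient has already been established, I do not expect a genuine obstacle here; the one point that deserves a moment of care is the verification that $V_0 \circ V_0$ is closed --- this is where the compactness of $X$ is used, and it is exactly what licenses the application of Theorem~\ref{t:entropy-expansive} to $V_0 \circ V_0$.
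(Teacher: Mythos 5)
Your proof is correct and follows essentially the same route as the paper: both arguments sandwich $\hsep(X,G,\FF,V_0)$ and $\hspa(X,G,\FF,V_0)$ via Lemma~\ref{l:inegalites-entre-cov-sep-spa-h} between quantities equal to $\htop(X,G)$, with Theorem~\ref{t:entropy-expansive} applied to the closed expansiveness entourage $V_0 \circ V_0$ at the lower end. The only (harmless) difference is at the upper end, where the paper chooses a closed expansiveness entourage $U_0$ with $U_0 \circ U_0^* \subset V_0$ and invokes Theorem~\ref{t:entropy-expansive} a second time, whereas you use $\hcov(X,G,U) \leq \hcov(X,G) = \htop(X,G)$ from Theorem~\ref{t:top-ent-uni}; your explicit verification that $V_0 \circ V_0$ is closed is a detail the paper leaves implicit.
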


\begin{proof}
Let $U_0$ be a closed expansiveness entourage such that $U_0 \circ U_0^* \subset V_0$.
By applying Lemma~\ref{l:inegalites-entre-cov-sep-spa-h} with $V = V_0$ and $U = U_0$,
we get
\begin{multline*}
\htop(X,G) = \hcov(X,G, V_0 \circ V_0) \leq \hspa(X,G,\FF,V_0) \leq \hsep(X,G,\FF,V_0)\\
\leq \hspa(X,G,\FF,U_0) \leq \hcov(X,G,U_0) = \htop(X,G),
\end{multline*}
where the first and  last equalities follow from Theorem~\ref{t:entropy-expansive}.
\end{proof}

\section{Homoclinicity}
\label{sec:homoclinicity}

Let $X$ be a uniform space equipped with a uniformly continuous  action of a group $G$.
One says that the points $x,y \in X$ are \emph{homoclinic} 
if they satisfy the following condition:
for every entourage $U$ of $X$, there exists a finite subset $\Omega = \Omega(U) \subset G$ such that
$(gx,gy) \in U$ for all $g \in G \setminus \Omega$.

\begin{proposition}
Let $X$ be a uniform space equipped with a uniformly continuous   action of a group $G$.
Then homoclinicity is an equivalence relation on $X$. 
\end{proposition}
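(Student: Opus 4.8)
The plan is to check directly the three defining properties of an equivalence relation --- reflexivity, symmetry, and transitivity --- using only the axioms of a uniform structure; the uniform continuity of the action will play no role here, only the fact that $G$ acts on $X$.

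First, for reflexivity, I would take any $x \in X$ and any entourage $U$ of $X$: since $U$ is reflexive by (UNI-1), the pair $(gx,gx)$ lies in $U$ for every $g \in G$, so the choice $\Omega(U) = \varnothing$ works and $x$ is homoclinic to $x$. Next, for symmetry, given $x$ homoclinic to $y$ and an entourage $U$ of $X$, I would apply the homoclinicity hypothesis to the entourage $U^*$ (which is an entourage of $X$ by (UNI-4)): the resulting finite subset $\Omega \subset G$ satisfies $(gx,gy) \in U^*$, that is, $(gy,gx) \in U$, for all $g \in G \setminus \Omega$, so that $y$ is homoclinic to $x$.

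The one step requiring a bit of care is transitivity. Suppose $x$ is homoclinic to $y$ and $y$ is homoclinic to $z$, and let $U$ be an entourage of $X$. I would use (UNI-5) to choose an entourage $V$ of $X$ with $V \circ V \subset U$, then pick finite subsets $\Omega_1, \Omega_2 \subset G$ with $(gx,gy) \in V$ for all $g \in G \setminus \Omega_1$ and $(gy,gz) \in V$ for all $g \in G \setminus \Omega_2$, and set $\Omega \coloneqq \Omega_1 \cup \Omega_2$, which is still finite. For every $g \in G \setminus \Omega$ both inclusions hold simultaneously, so taking $gy$ as the intermediate point gives $(gx,gz) \in V \circ V \subset U$ by the definition of composition. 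Hence $x$ is homoclinic to $z$. This is the only place where one must remember to enlarge the exceptional set to the union $\Omega_1 \cup \Omega_2$ and to invoke the composition axiom (UNI-5); everything else is immediate, so I expect no genuine obstacle.
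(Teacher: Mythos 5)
Your proposal is correct and follows the same route as the paper's (much terser) proof: reflexivity from (UNI-1), symmetry from the stability of entourages under inversion, and transitivity from (UNI-5) together with taking the union of the two finite exceptional sets. No issues.
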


\begin{proof}
Homoclinicity is a reflexive relation since every entourage of $X$ contains the diagonal by (UNI-1).
It is symmetric since every entourage contains  a symmetric one by (UNI-3) and (UNI-4).
Transitivity follows from the fact that if $U$ is an entourage of $X$, then
 there is an entourage $V$  of $X$  such that
$V\circ V \subset U$ by (UNI-5).  
\end{proof}

\begin{proposition}
\label{p:homoclinicity-almost-equality}
Let $G$ be a group and let $A$ be a set.
Equip $A^G$ with its prodiscrete uniform structure and the shift action of $G$.
Then two configurations in $A^G$ are homoclinic 
if and only if they are almost equal. 
\end{proposition}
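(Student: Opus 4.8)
The plan is to unwind both directions directly from the definitions. First recall what "almost equal" should mean here: two configurations $x, y \in A^G$ are almost equal if the set $D(x,y) \coloneqq \{g \in G : x(g) \neq y(g)\}$ is finite. For the forward direction, suppose $x$ and $y$ are homoclinic. Apply the homoclinicity condition to the particular entourage $U \coloneqq W(\{1_G\})$ from \eqref{e:base-entourages}, i.e. the set of pairs $(u,v) \in A^G \times A^G$ with $u(1_G) = v(1_G)$; this is an entourage of the prodiscrete uniform structure since $\{1_G\}$ is a finite subset of $G$. Homoclinicity yields a finite $\Omega \subset G$ such that $(gx, gy) \in U$ for all $g \in G \setminus \Omega$. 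Unravelling the shift action \eqref{e:shift-action}, the condition $(gx, gy) \in W(\{1_G\})$ says $gx(1_G) = gy(1_G)$, that is, $x(g^{-1}) = y(g^{-1})$. Hence $x(g^{-1}) = y(g^{-1})$ for every $g \in G \setminus \Omega$, so $D(x,y) \subset \Omega^{-1}$, which is finite. Thus $x$ and $y$ are almost equal.

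For the converse, suppose $D \coloneqq D(x,y)$ is finite, and let $U$ be an arbitrary entourage of $A^G$. By the description of a base of entourages in \eqref{e:base-entourages}, there exist a finite subset $\Lambda \subset G$ with $W(\Lambda) \subset U$. Set $\Omega \coloneqq \Lambda D^{-1}$ (a finite subset of $G$, being a product of two finite sets). I claim $(gx, gy) \in W(\Lambda)$, hence $\in U$, for all $g \in G \setminus \Omega$. Indeed, fix such a $g$ and any $h \in \Lambda$: we must check $gx(h) = gy(h)$, i.e. $x(g^{-1}h) = y(g^{-1}h)$. If this failed, then $g^{-1}h \in D$, so $g \in hD^{-1} \subset \Lambda D^{-1} = \Omega$, contradicting $g \notin \Omega$. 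Therefore $x(g^{-1}h) = y(g^{-1}h)$ for all $h \in \Lambda$, which is exactly $(gx,gy) \in W(\Lambda) \subset U$. Since $U$ was arbitrary, $x$ and $y$ are homoclinic.

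This argument is essentially bookkeeping with the shift action and finite sets, so there is no serious obstacle; the only point requiring a little care is getting the inversions right (both the $g \mapsto g^{-1}$ coming from the shift formula and the translate $\Lambda D^{-1}$ in the converse), and making sure the single test entourage $W(\{1_G\})$ suffices for the forward direction — which it does precisely because the prodiscrete structure is generated by finite-support conditions and the $G$-action is by permutations of the coordinates.
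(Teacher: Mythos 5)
Your proof is correct and follows essentially the same route as the paper: the forward direction tests homoclinicity against the single entourage $W(\{1_G\})$ and inverts to get $D(x,y)\subset\Omega^{-1}$, and the converse uses the finite translate $\Lambda D^{-1}$ (the paper's $\Lambda\Omega^{-1}$) as the exceptional set. The bookkeeping with the inversions is handled correctly throughout.
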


\begin{proof}
Let $x,y \in A^G$ and suppose first that $x$ and $y$ are almost equal.
This means that there exists a finite subset $\Omega \subset G$ such that
$x(g) = y(g)$ for all $g \in G \setminus \Omega$.
Now let $W \subset A^G \times A^G$ be an entourage of $A^G$.
By definition of the prodiscrete uniform structure,
there is a finite subset $\Lambda \subset G$ such that $W(\Lambda) \subset W$,
where $W(\Lambda) \subset A^G \times A^G$ is  as in \eqref{e:base-entourages}.
Observe now that $(gx,gy) \in W(\Lambda) \subset W$ for all $g \in G \setminus \Lambda \Omega^{-1}$.
As the set $\Lambda \Omega^{-1}$ is finite, 
we deduce that $x$ and $y$ are homoclinic. 
 \par
Conversely, suppose that  $x$ and $y$ are homoclinic.
Then there exists a finite subset $\Omega \subset G$ such that $(g x, g y) \in W(\{1_G\})$ for all 
$g \in G \setminus \Omega$.
This implies that $x(g) = y(g)$ for all $g \in G \setminus \Omega^{-1}$.
Therefore  $x$ and $y$ are almost equal.
\end{proof}

The following result is a uniform version of \cite[Lemma 6.2]{chung-li}
(see also \cite[Lemma~1]{bryant} and \cite[Theorem~10.36]{gott-hed}).

\begin{proposition}
\label{p:homoclinic-equiv}
Let $X$ be a compact Hausdorff space equipped with a continuous action of a group $G$.
Suppose that the action of $G$ on $X$ is expansive and let $U_0$ be a closed expansiveness entourage for 
$(X,G)$.
Let $x,y \in X$. Then the following conditions are equivalent:
\begin{enumerate}[{\rm (a)}]
\item $x$ and $y$ are homoclinic;
\item there exists a finite subset $\Omega_0 \subset G$ such that
$(gx, gy) \in U_0$ for all $g \in G \setminus \Omega_0$.
\end{enumerate}
\end{proposition}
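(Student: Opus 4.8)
The plan is to prove the two implications separately, with the forward direction being essentially trivial and the reverse direction requiring the expansiveness machinery developed in Lemma~\ref{l:expansive-base-entourages}.

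First, (a) $\Rightarrow$ (b) is immediate: since $U_0$ is in particular an entourage of $X$, the definition of homoclinicity applied to $U = U_0$ directly produces a finite subset $\Omega_0 \subset G$ with $(gx, gy) \in U_0$ for all $g \in G \setminus \Omega_0$.

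The substance is in (b) $\Rightarrow$ (a). Assume (b) holds with finite set $\Omega_0$, and let $U$ be an arbitrary entourage of $X$; we must produce a finite $\Omega \subset G$ such that $(gx, gy) \in U$ for all $g \notin \Omega$. By Lemma~\ref{l:expansive-base-entourages} there is a finite subset $E = E(U) \subset G$ with $\bigcap_{h \in E} h^{-1} U_0 \subset U$. I claim that $\Omega \coloneqq E^{-1} \Omega_0$ works. Indeed, suppose $g \in G \setminus E^{-1}\Omega_0$. Then for every $h \in E$ we have $hg \notin \Omega_0$ (otherwise $g = h^{-1}(hg) \in E^{-1}\Omega_0$), so (b) gives $(hgx, hgy) \in U_0$, i.e.\ $(gx, gy) \in h^{-1} U_0$. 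Since this holds for all $h \in E$, we get $(gx, gy) \in \bigcap_{h \in E} h^{-1} U_0 \subset U$. As $E^{-1}\Omega_0$ is finite, this shows $x$ and $y$ are homoclinic.

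The only point requiring any care is the bookkeeping with the group action on entourages --- tracking that $(gx, gy) \in h^{-1} U_0$ is equivalent to $(hgx, hgy) \in U_0$, and that the translated exceptional set $E^{-1}\Omega_0$ is still finite --- but there is no genuine obstacle here. The real content is entirely borrowed: it is Lemma~\ref{l:expansive-base-entourages}, whose proof used compactness of $X$ and the closedness of the expansiveness entourage $U_0$ to replace an arbitrary entourage by a finite intersection of $G$-translates of $U_0$.
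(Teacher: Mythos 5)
Your proof is correct and follows exactly the paper's argument: the forward implication is immediate from the definition of homoclinicity, and the reverse implication invokes Lemma~\ref{l:expansive-base-entourages} to write an arbitrary entourage $U$ as containing $\bigcap_{h \in E} h^{-1}U_0$ and then takes the translated finite exceptional set $E^{-1}\Omega_0$, just as in the paper (where the finite set is called $F$). No differences of substance.
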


\begin{proof}
The implication (a) $\implies$ (b) follows immediately from the definition of homoclinicity.
\par
Conversely, suppose (b) and  let $U$ be an entourage of $X$. 
By Lemma~\ref{l:expansive-base-entourages}, there exists a finite subset $F = F(U) \subset G$ such that 
$\bigcap_{h \in F} h^{-1} U_0 \subset U$. 
Consider the finite subset $\Omega \subset G$ defined by $\Omega \coloneqq F^{-1}\Omega_0$.
Then for $g \in G \setminus \Omega$ and $h \in F$, we have 
$h g \in G \setminus \Omega_0$ so that $(g x, g y) \in h^{-1}  U_0$.
 It follows that $(g x, g y) \in \bigcap_{h \in F} h^{-1} U_0 \subset U$ 
 for all $g \in G \setminus \Omega$.
This shows that $x$ and $y$ are homoclinic.
\end{proof}

\section{Weak specification}
\label{sec:weak-specification} 

The concept of specification  for dynamical systems was first introduced by Bowen 
in~\cite[Section~2.9]{bowen-tams-1971}.
Several variants and extensions  of Bowen's original definition of specification  appear in the literature
(see  \cite[Definition~6.1]{chung-li}, \cite[Chapter~21]{dgs}, \cite{klo-specification}, 
\cite[Definition~5.1]{lind-schmidt-jams-1999}, \cite{ruelle-tams-1973}).
Roughly speaking, specification is a property allowing  to  approximate sufficiently separated pieces of orbits  by a single (sometimes required to be periodic) orbit.  
Connections of specification with chaos for iterates of uniformly continuous maps on uniform spaces was investigated in~\cite{das-das}.
The definition of weak specification below  is 
equivalent to the one given in~\cite{chung-li} and \cite{li-goe} when restricted  to continuous group actions on compact metrizable spaces.   

\begin{definition}
\label{d:weak-specification}
Let $X$ be a uniform space equipped with a uniformly continuous action of a group $G$. 
We say that the action of $G$ on $X$ has the \emph{weak specification property} if it satisfies the following condition:
\begin{enumerate}[{\rm (WSP)}]
\item for every entourage $U$ of $X$, there exists  a finite subset $\Lambda = \Lambda(U) \subset G$ such that the following holds:
for any finite family $(\Omega_i)_{i \in I}$ of finite subsets of $G$ such that
$\Omega_j \cap \Lambda \Omega_k = \varnothing$ for all distinct $j,k \in I$  and for any family of points $(x_i)_{i \in I}$ in $X$,
there exists a point $x \in X$ such that 
$( x, x_i) \in U^{(\Omega_i)}$ for all   $i \in I$. 
\end{enumerate}
Such a subset  $\Lambda \subset G$ is then called a \emph{specification subset} for $(X,G,U)$.
\end{definition}

\begin{proposition}
\label{p:WSP-factor}
Let $X$ be a uniform space equipped with a uniformly continuous action of a group $G$.
Suppose that the action of $G$ on $X$ has the weak specification property.
Then every uniform factor of $(X,G)$ also has the weak specification property.
\end{proposition}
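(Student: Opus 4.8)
The plan is to push forward the specification data along the factor map. Let $\pi \colon X \to Y$ be a uniformly continuous $G$-equivariant surjection, and suppose $(X,G)$ has the weak specification property. Given an entourage $V$ of $Y$, I would first use uniform continuity of $\pi$ to produce an entourage $U$ of $X$ with $(\pi \times \pi)(U) \subset V$; more precisely, $U \coloneqq (\pi \times \pi)^{-1}(V)$ is an entourage of $X$, and it satisfies $(x,x') \in U \implies (\pi(x),\pi(x')) \in V$. The natural candidate for a specification subset for $(Y,G,V)$ is then $\Lambda \coloneqq \Lambda(U)$, a specification subset for $(X,G,U)$ furnished by (WSP) for the system $(X,G)$.

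Next I would verify that this $\Lambda$ works. Take any finite family $(\Omega_i)_{i \in I}$ of finite subsets of $G$ with $\Omega_j \cap \Lambda\Omega_k = \varnothing$ for all distinct $j,k$, and any family of points $(y_i)_{i \in I}$ in $Y$. Using surjectivity of $\pi$, choose for each $i \in I$ a point $x_i \in X$ with $\pi(x_i) = y_i$. Applying (WSP) for $(X,G)$ to the same family $(\Omega_i)$ and the points $(x_i)$, I obtain a point $x \in X$ with $(x,x_i) \in U^{(\Omega_i)}$ for all $i \in I$, i.e.\ $(gx,gx_i) \in U$ for all $g \in \Omega_i$. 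Set $y \coloneqq \pi(x)$. For $g \in \Omega_i$, applying $\pi$ and using $G$-equivariance gives $(gy, gy_i) = (\pi(gx), \pi(gx_i)) \in (\pi\times\pi)(U) \subset V$, so $(y,y_i) \in V^{(\Omega_i)}$. This is exactly the conclusion of (WSP) for $(Y,G,V)$, with specification subset $\Lambda$, so $(Y,G)$ has the weak specification property.

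There is essentially no obstacle here: the argument is a routine transport of structure. The one small point requiring a word of care is that the combinatorial hypothesis $\Omega_j \cap \Lambda\Omega_k = \varnothing$ on the downstairs family is used verbatim upstairs — it is a condition purely on subsets of $G$ and does not interact with the spaces, so no adjustment of $\Lambda$ is needed. The only genuinely space-dependent inputs are the uniform continuity of $\pi$ (to pull back $V$ to an entourage $U$) and the surjectivity of $\pi$ (to lift each $y_i$ to some $x_i$), both of which are part of the definition of a uniform factor.

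\begin{proof}
Let $\pi \colon X \to Y$ be a uniformly continuous $G$-equivariant surjective map and assume that $(X,G)$ has the weak specification property. Let $V$ be an entourage of $Y$. Since $\pi$ is uniformly continuous, $U \coloneqq (\pi \times \pi)^{-1}(V)$ is an entourage of $X$, and by construction $(x,x') \in U$ implies $(\pi(x),\pi(x')) \in V$. Let $\Lambda = \Lambda(U) \subset G$ be a specification subset for $(X,G,U)$ given by (WSP) applied to the system $(X,G)$.
\par
We claim that $\Lambda$ is a specification subset for $(Y,G,V)$. Indeed, let $(\Omega_i)_{i \in I}$ be a finite family of finite subsets of $G$ such that $\Omega_j \cap \Lambda \Omega_k = \varnothing$ for all distinct $j,k \in I$, and let $(y_i)_{i \in I}$ be a family of points of $Y$. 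As $\pi$ is surjective, we may choose, for each $i \in I$, a point $x_i \in X$ with $\pi(x_i) = y_i$. Applying (WSP) for $(X,G)$ to the family $(\Omega_i)_{i \in I}$ and the points $(x_i)_{i \in I}$, we obtain a point $x \in X$ such that $(x,x_i) \in U^{(\Omega_i)}$ for all $i \in I$, that is, $(g x, g x_i) \in U$ for all $g \in \Omega_i$ and $i \in I$.
\par
Set $y \coloneqq \pi(x)$. Fix $i \in I$ and $g \in \Omega_i$. Since $\pi$ is $G$-equivariant, we have $g y = g \pi(x) = \pi(g x)$ and $g y_i = g \pi(x_i) = \pi(g x_i)$. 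As $(g x, g x_i) \in U = (\pi \times \pi)^{-1}(V)$, it follows that $(g y, g y_i) = (\pi(g x), \pi(g x_i)) \in V$. Hence $(y, y_i) \in V^{(\Omega_i)}$ for all $i \in I$. This shows that $(Y,G)$ has the weak specification property.
\end{proof}
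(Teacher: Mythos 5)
Your proof is correct and follows essentially the same route as the paper's: pull back the entourage $V$ along the factor map (the paper phrases this as choosing $U$ with $(f\times f)(U)\subset V$, you take $U=(\pi\times\pi)^{-1}(V)$ directly), reuse the specification subset $\Lambda(U)$ from upstairs, lift the points by surjectivity, and push the resulting point back down using $G$-equivariance. No issues.
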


\begin{proof}
Let $Y$ be a uniform space equipped with a uniformly continuous action of $G$ and suppose
that $(Y,G)$ is a uniform factor of $(X,G)$. 
This means that there
exists a $G$-equivariant uniformly continuous surjective map $f \colon X \to Y$. 
\par
Let $V$ be an entourage of $Y$. 
Since $f$ is uniformly continuous, we can find an entourage $U$ of $X$ such that
\begin{equation}
\label{e:f-f-U-V}
(f \times f) (U) \subset V.
\end{equation}
Let $\Lambda  \subset G$ 
be a specification subset for $(X,G,U)$. Let $(\Omega_i)_{i \in I}$ be a finite family of finite subsets of $G$ such that
$\Omega_j \cap \Lambda \Omega_k = \varnothing$ for all distinct $j,k \in I$, and let $(y_i)_{i \in I}$ be a family of points in $Y$.
Since $f$ is surjective, we can find, for each $i \in I$,  a point $x_i \in X$ such that $y_i = f(x_i)$.
On the other hand, as $\Lambda$ is a specification subset for $(X,G,U)$, 
there exists  a point $x \in X$  such that
\begin{equation}
\label{e:gixi-gix-U}
(x, x_i) \in U^{(\Omega_i)}
\end{equation}
for all  $i \in I$.
Setting $y \coloneqq f(x) \in Y$, we then have
\begin{align*}
(y, y_i) & = (f(x),  f(x_i))\\
&= (f \times f) (x,x_i) \\
&\in (f \times f) (U^{(\Omega_i)}) 
&& \text{(by~\eqref{e:gixi-gix-U})} \\
&= ((f \times f)(U))^{(\Omega_i)}
&&\text{(since $f$ is $G$-equivariant)} \\
&\subset V^{(\Omega_i)}
&&\text{(by \eqref{e:f-f-U-V})}
\end{align*}
and hence $(y,y_i) \in V^{(\Omega_i)}$ for all  $i \in I$. 
This shows that $\Lambda$ is a specification subset for $(Y,G,V)$ and hence that the action of $G$ on $Y$ has the weak specification property.
\end{proof}

\begin{proposition}
\label{p:WSP-produit}
Let $(X_k)_{k \in K}$ be a (possibly infinite) family of uniform spaces and let $G$ be a group.
Suppose that each $X_k$, $k \in K$, is equipped with an action of $G$ having the weak specification property.
Then the diagonal action of $G$ on the uniform product $X \coloneqq \prod_{k \in K} X_k$ 
also has the weak specification property.
\end{proposition}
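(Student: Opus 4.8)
The plan is to reduce the statement about the infinite product to the corresponding property of each factor by working entourage-by-entourage. First I recall that a base of entourages for the product uniform structure on $X = \prod_{k \in K} X_k$ is given by finite intersections of preimages of entourages in the factors: every entourage $U$ of $X$ contains one of the form $\bigcap_{k \in K_0} p_k^{-1}(U_k)$, where $K_0 \subset K$ is finite, $U_k$ is an entourage of $X_k$, and $p_k \colon X \to X_k$ is the projection. Since condition (WSP) need only be verified on a base of entourages (enlarging $U$ only makes the conclusion easier), it suffices to produce a specification subset for each such basic $U$.

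Next, for each $k \in K_0$, let $\Lambda_k \subset G$ be a specification subset for $(X_k, G, U_k)$, and set $\Lambda \coloneqq \bigcup_{k \in K_0} \Lambda_k$, a finite subset of $G$. I claim $\Lambda$ is a specification subset for $(X, G, U)$. Given a finite family $(\Omega_i)_{i \in I}$ of finite subsets of $G$ with $\Omega_j \cap \Lambda \Omega_k = \varnothing$ for all distinct $j, k \in I$, and a family $(x_i)_{i \in I}$ of points of $X$ with $x_i = (x_{i,k})_{k \in K}$, observe that for each fixed $k \in K_0$ we have $\Lambda_k \subset \Lambda$, so the same disjointness hypothesis $\Omega_j \cap \Lambda_k \Omega_l = \varnothing$ holds for all distinct $j, l \in I$. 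Applying (WSP) for $X_k$ to the family $(\Omega_i)_{i \in I}$ and the points $(x_{i,k})_{i \in I}$ yields a point $z_k \in X_k$ with $(z_k, x_{i,k}) \in U_k^{(\Omega_i)}$ for all $i \in I$.

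Now I assemble the point $x = (x_k)_{k \in K} \in X$ by setting $x_k \coloneqq z_k$ for $k \in K_0$ and choosing $x_k$ arbitrarily for $k \notin K_0$. For any $i \in I$ and any $g \in \Omega_i$, and each $k \in K_0$, we have $(g x_k, g x_{i,k}) \in U_k$ by construction, which says precisely that $(g x, g x_i) \in p_k^{-1}(U_k)$; intersecting over $k \in K_0$ gives $(g x, g x_i) \in \bigcap_{k \in K_0} p_k^{-1}(U_k) = U$. Hence $(x, x_i) \in U^{(\Omega_i)}$ for all $i \in I$, as required. This completes the verification that $\Lambda$ is a specification subset for $(X, G, U)$, and since $U$ ranged over a base of entourages, the diagonal action of $G$ on $X$ has the weak specification property.

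I do not expect a genuine obstacle here; the only point requiring a little care is that the index set $K$ may be infinite, which forces one to pass to a base of entourages depending only on finitely many coordinates before the specification subsets can be combined into a single finite set $\Lambda$. The compatibility of the operation $U \mapsto U^{(\Omega)}$ with preimages under the $G$-equivariant projections $p_k$ (used implicitly above, exactly as in the proof of Proposition~\ref{p:WSP-factor}) is routine and follows from the projections being $G$-equivariant. One should also note that enlarging a specification subset $\Lambda$ preserves the property, which is what lets us replace the individual $\Lambda_k$ by their union.
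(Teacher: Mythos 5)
Your proof is correct and follows essentially the same route as the paper: reduce to a basic entourage $\bigcap_{k \in K_0} p_k^{-1}(U_k)$ of the product uniformity, take the union of specification subsets $\Lambda_k$ for the finitely many relevant factors, and verify the specification condition coordinatewise. The paper states the final verification as ``clear''; you simply spell it out.
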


\begin{proof}
For each $k \in K$, denote by $p_k \colon X \to X_k$  the projection map onto $X_k$.
Let $U$ be an entourage of $X$.
Then one can find  a finite set $L \subset K$ and, for every $k \in L$, and entourage $U_k$ of $X_k$ such that the entourage $V$ of $X$ defined by
\[
V \coloneqq \{ (x,y) \in X \times X : (p_k(x),p_k(y))  \in U_k \text{  for all  } k \in L\} 
\]
satisfies $V \subset U$.
As the action of $G$ on $X_k$ has the weak specification property, one can find, for every $k \in L$,
a finite subset $\Lambda_k \subset G$
such that $\Lambda_k$ is a specification subset for $(X_k,G,U_k)$.
Clearly $\Lambda \coloneqq \bigcup_{k \in L} \Lambda_k$ is a specification subset for 
$(X,G,V)$ and hence for $(X,G,U)$.
\end{proof}

The following result will be used in the proof of Theorem~\ref{t:entropie-diminue-failure-preinjectivite}.  It says  that, when $X$ is compact Hausdorff,  
we get an equivalent definition for weak specification  if we remove the 
finiteness hypotheses for the index set $I$ and the  subsets 
$\Omega_i$  in Definition~\ref{d:weak-specification}.

\begin{proposition}
\label{p:wspec-infinite}
Let $X$ be a compact Hausdorff space equipped with a  continuous action of a group $G$ satisfying the weak specification property.
Let $U$ be a closed entourage of $X$ and let $\Lambda \subset G$ be a specification subset for $(X,G,U)$.
Let $(\Omega_i)_{i \in I}$ be a (possibly infinite) family  of (possibly infinite) subsets of $G$ such that
\begin{equation}
\label{e:lambda-separated}
\Omega_j \cap \Lambda \Omega_k = \varnothing \text{  for all distinct $j,k \in I$.}
\end{equation} 
and let  $(x_i)_{i \in I}$  be a family of points in $X$.
Then there exists a point $x \in X$ such that 
$(x,  x_i) \in U^{(\Omega_i)}$ for all  $i \in I$.
\end{proposition}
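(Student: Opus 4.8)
The plan is to reduce the infinite case to the finite case of Definition~\ref{d:weak-specification} by a compactness argument. For each \emph{finite} subset $I_0 \subset I$ and each choice of finite subsets $\Omega_i' \subset \Omega_i$ (for $i \in I_0$), the family $(\Omega_i')_{i \in I_0}$ still satisfies the separation hypothesis~\eqref{e:lambda-separated}, so the weak specification property yields a point $y \in X$ with $(y, x_i) \in U^{(\Omega_i')}$ for all $i \in I_0$. In other words, the closed set
\[
K(I_0, (\Omega_i')_{i \in I_0}) \coloneqq \bigcap_{i \in I_0} \{ z \in X : (z, x_i) \in U^{(\Omega_i')} \}
\]
is nonempty. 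Here I use that $U$ is closed, hence each $U^{(\Omega_i')} = \bigcap_{g \in \Omega_i'} g^{-1} U$ is closed (the action being continuous means each $g^{-1}U$ is closed), so each set in the intersection, being a slice $U^{(\Omega_i')}[\cdot]$ pulled back appropriately — more precisely $\{z : (z,x_i) \in U^{(\Omega_i')}\}$ — is closed in $X$.

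The key step is then to observe that these sets form a family of closed subsets of the compact space $X$ with the finite intersection property. Indeed, given finitely many such sets, indexed say by pairs $(I_0^{(1)}, \dots)$ through $(I_0^{(m)}, \dots)$, their intersection contains $K(I_0, (\Omega_i')_{i \in I_0})$ where $I_0 \coloneqq \bigcup_s I_0^{(s)}$ and, for each $i \in I_0$, $\Omega_i'$ is the (finite) union of the chosen finite subsets of $\Omega_i$ over those $s$ with $i \in I_0^{(s)}$, together with, say, a single point of $\Omega_i$ for indices not otherwise covered (or one simply notes $\Omega_i'$ may be taken nonempty; if some $\Omega_i$ is empty the corresponding constraint is vacuous and can be dropped). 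Since $I_0$ is finite and each $\Omega_i'$ is finite, $K(I_0, (\Omega_i')_{i \in I_0})$ is nonempty by the previous paragraph, and it is contained in each of the $m$ given sets. By compactness of $X$, the intersection of the \emph{entire} family is nonempty; any point $x$ in it satisfies, for every $i \in I$ and every $g \in \Omega_i$, the membership $(x, x_i) \in g^{-1}U$, i.e.\ $(x, x_i) \in U^{(\Omega_i)}$, as desired.

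The main obstacle — really the only subtlety — is the bookkeeping needed to check the finite intersection property while respecting the constraint that the separation hypothesis~\eqref{e:lambda-separated} is inherited by all sub-families: this is automatic since passing to subsets $\Omega_i' \subset \Omega_i$ only shrinks the sets $\Omega_j$ and $\Lambda \Omega_k$, so disjointness is preserved, and one must also handle the degenerate cases where some $\Omega_i = \varnothing$ (then $U^{(\Omega_i)} = X \times X$ and the constraint on $x$ from index $i$ is vacuous, so such indices may simply be discarded at the outset). One should also remark that $U$ being closed is used essentially: without it the sets $\{z : (z,x_i) \in U^{(\Omega_i')}\}$ need not be closed and the compactness argument breaks down; the hypothesis is harmless in applications because, $X$ being compact Hausdorff, closed entourages form a base. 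With these points dispatched, the proof is complete.
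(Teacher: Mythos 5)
Your proof is correct and follows essentially the same route as the paper: both reduce the statement to the finite case of (WSP) by exhibiting a family of nonempty closed subsets of $X$ (indexed by finite truncations of $I$ and of the $\Omega_i$) with the finite intersection property, and then invoking compactness; the paper merely parameterizes the truncations by a single finite subset $A \subset G$ via $A \cap \Omega_i$ instead of independently chosen finite $\Omega_i' \subset \Omega_i$. Your observations that closedness of $U$ is what makes the sets closed and that the separation hypothesis is inherited by subsets are exactly the points the paper also checks.
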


\begin{proof}
Denote by $\PPf(G)$ (resp.~$\PPf(I)$) the set of all finite subsets of $G$ (resp.~$I$).
Consider, for $A \in \PPf(G)$ and $J \in \PPf(I)$,  the subset $X(A,J) \subset X$ consisting of all $x \in X$ such that
$(x,x_i) \in U^{(A \cap \Omega_i)}$ for all  $i \in J$.
First observe that since $U$ is closed then $X(A,J)$ is closed in $X$ for all $A \in \PPf(G)$ and $J \in \PPf(I)$.
Indeed,
\[
X(A,J) = \bigcap_{i \in J} \bigcap_{a_i \in A \cap \Omega_i} a_i^{-1} U[a_ix_i].
\]
On the other hand, if we fix $A \in \PPf(G)$ and $J \in \PPf(I)$, then the subsets $A \cap \Omega_i$, $i \in J$,
are finite and satisfy
\[
(A \cap \Omega_j) \cap \Lambda  (A \cap \Omega_k)  = \varnothing \quad \text{for all distinct $j,k \in J$}
\]
by~\eqref{e:lambda-separated}.
Since $\Lambda$ is a specification subset for $(X,G,U)$,
we have  $X(A,J) \neq \varnothing$.
As
\[
X(A_1,J_1) \cap X(A_2,J_2) \cap  \cdots \cap X(A_n,J_n) \supset X(A_1 \cup A_2 \cup  \cdots \cup A_n,J_1 \cup J_2 \cup \dots \cup J_n),
\]
we deduce that  $X(A_1,J_1) \cap X(A_2,J_2)\cap  \dots \cap X(A_n,J_n) \neq \varnothing$ for all
$A_1,A_2, \ldots , A_n \in \PPf(G)$, $J_1,J_2,\ldots,J_n \in \PPf(I)$,  and $n \geq 0$.

Thus $(X(A,J))_{A \in \PPf(G), J \in \PPf(I)}$ is a family of closed subsets of $X$ with the finite intersection property.
By compactness of $X$, the intersection of this family is not empty.
This means that there exists a point $x \in X$ such that $x \in X(A,J)$ for all finite subsets $A \subset G$ and $J \subset I$.
Clearly, such an $x$ satisfies  $(x, x_i) \in U^{(\Omega_i)}$ for all   $i \in I$.
\end{proof}

\begin{proposition}
\label{p:wsp-ultrauniform}
Let $X$ be a ultrauniform space equipped with a uniformly continuous action of a group $G$.
Then the action of $G$ has the weak specification property, that is, satisfies condition (WSP),
if and only if the following condition is satisfied:
\begin{enumerate}[{\rm (WSP')}]
\item for every entourage $U$ of $X$, there exists a finite subset $\Lambda \subset G$ such that the following holds:
if $\Omega_1$ and $\Omega_2$ are two finite subsets of $G$ such that $\Omega_1 \cap \Lambda \Omega_2 = \varnothing$, then,
given any two points $x_1, x_2 \in X$, there exists a point $x \in X$ such that 
$(x, x_k)\in U^{(\Omega_k)}$ for all
 $k \in \{ 1,2\}$.
\end{enumerate}
\end{proposition}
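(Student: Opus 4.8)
The plan is to prove the two implications separately. The implication (WSP) $\implies$ (WSP') is the easy one. Given an entourage $U$, let $\Lambda_0$ be a specification subset for $(X,G,U)$ furnished by (WSP), and set $\Lambda \coloneqq \Lambda_0 \cup \Lambda_0^{-1}$. Any subset of $G$ containing a specification subset is again a specification subset for $(X,G,U)$, since enlarging $\Lambda_0$ only makes the disjointness hypothesis $\Omega_j \cap \Lambda\Omega_k = \varnothing$ harder to satisfy; so $\Lambda$ also works in (WSP). Now if $\Omega_1,\Omega_2$ are finite subsets of $G$ with $\Omega_1 \cap \Lambda\Omega_2 = \varnothing$, then, since $\Lambda$ is symmetric, one also has $\Omega_2 \cap \Lambda\Omega_1 = \varnothing$; hence the pair $(\Omega_1,\Omega_2)$ satisfies the hypotheses of (WSP) for the index set $I = \{1,2\}$, and the point produced by (WSP) is exactly the point required by (WSP').

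For the converse (WSP') $\implies$ (WSP), fix an entourage $U$. Since $X$ is ultrauniform, we may choose an entourage $V \subseteq U$ which is the graph of an equivalence relation, so that $V$ is reflexive, symmetric and transitive; in particular $V \circ V \subseteq V$. The key elementary observation is that for every subset $\Omega \subseteq G$ the entourage $V^{(\Omega)}$ inherits these properties: it is reflexive, and from $V \circ V \subseteq V$ one gets $V^{(\Omega)} \circ V^{(\Omega)} \subseteq (V \circ V)^{(\Omega)} \subseteq V^{(\Omega)}$, so $V^{(\Omega)}$ is transitive as well. Apply (WSP') to the entourage $V$ to obtain a finite subset $\Lambda \subseteq G$. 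I claim that $\Lambda$ is a specification subset for $(X,G,V)$; since $V \subseteq U$ implies $V^{(\Omega)} \subseteq U^{(\Omega)}$ for every $\Omega$, this $\Lambda$ is then a specification subset for $(X,G,U)$, which is precisely the content of (WSP).

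To prove the claim, let $(\Omega_i)_{i \in I}$ be a finite family of finite subsets of $G$ with $\Omega_j \cap \Lambda\Omega_k = \varnothing$ for all distinct $j,k \in I$, and let $(x_i)_{i \in I}$ be points of $X$; we argue by induction on $n = |I|$ that there is $x \in X$ with $(x,x_i) \in V^{(\Omega_i)}$ for all $i$. For $n \leq 1$ take $x \coloneqq x_i$ and use reflexivity of $V^{(\Omega_i)}$. For $n \geq 2$, write $I = \{1,\dots,n\}$. By the induction hypothesis applied to the subfamily indexed by $\{1,\dots,n-1\}$ there is $y \in X$ with $(y,x_i) \in V^{(\Omega_i)}$ for $1 \leq i \leq n-1$. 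Set $\Omega' \coloneqq \Omega_1 \cup \dots \cup \Omega_{n-1}$; then $\Omega' \cap \Lambda\Omega_n = \bigcup_{i=1}^{n-1}(\Omega_i \cap \Lambda\Omega_n) = \varnothing$, so (WSP'), applied to the pair $(\Omega',\Omega_n)$ and the points $(y,x_n)$, yields $x \in X$ with $(x,y) \in V^{(\Omega')}$ and $(x,x_n) \in V^{(\Omega_n)}$. For $1 \leq i \leq n-1$ we have $\Omega_i \subseteq \Omega'$, hence $V^{(\Omega')} \subseteq V^{(\Omega_i)}$, so $(x,y) \in V^{(\Omega_i)}$; combining this with $(y,x_i) \in V^{(\Omega_i)}$ and the transitivity of $V^{(\Omega_i)}$ gives $(x,x_i) \in V^{(\Omega_i)}$. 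Thus $(x,x_i) \in V^{(\Omega_i)}$ for every $i \in I$, completing the induction.

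The place where the ultrauniform hypothesis is genuinely needed — and the main obstacle if one tries to argue in an arbitrary uniform space — is the gluing in the inductive step, where one composes the two relations $V^{(\Omega_i)}$ through the auxiliary point $y$. In a general uniform space each gluing would force one to pass from $V$ to a smaller entourage $V'$ with $V' \circ V' \subseteq V$, and iterating over the $n-1$ stages the amount of shrinking, and hence the size of $\Lambda$, would depend on $n$, which (WSP) does not permit. Choosing $V$ to be the graph of an equivalence relation makes every $V^{(\Omega)}$ transitive, so arbitrarily many gluings cost nothing and a single $\Lambda$ serves for all finite families simultaneously.
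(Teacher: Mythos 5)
Your proof is correct and follows essentially the same route as the paper's: pass to an equivalence-relation entourage $V \subset U$ (so that every $V^{(\Omega)}$ is transitive), apply (WSP') to $V$, and induct on $|I|$, gluing the new point to the one supplied by the inductive hypothesis at no cost in the entourage. Your handling of the easy direction is in fact slightly more careful than the paper's, which dismisses it as trivial: symmetrizing to $\Lambda_0 \cup \Lambda_0^{-1}$ correctly accounts for the fact that (WSP') only assumes the one-sided condition $\Omega_1 \cap \Lambda\Omega_2 = \varnothing$ while (WSP) requires disjointness in both orders.
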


\begin{proof}
As (WSP) trivially implies (WSP'), we only need to prove the converse implication.
Suppose (WSP'). In order to show (WSP), we proceed by induction on $|I|$.
The base case corresponds to $|I| = 2$ and this is indeed our assumption.
Suppose that (WSP) holds whenever $|I| \leq n$ and let $\Lambda = \Lambda(W,n) \subset G$ denote a finite subset
guaranteeing (WSP) for any equivalence entourage $W$ of $X$ and any index set $I$ with $|I| \leq n$.
Fix an entourage $U$ of $X$ and let $I'$ be a finite index set with $|I'| = n+1$.
Since $X$ is ultrauniform, we can find an equivalence entourage $W \subset U$.
Let us show that $\Lambda \coloneqq \Lambda(W,n)$ also satisfies (WSP) for $I'$.
Let $(\Omega_i)_{i \in I'}$ be a family of finite subsets of $G$ such that $\Omega_j \cap \Lambda \Omega_k = \varnothing$
for all distinct $j,k \in I'$, and let $(x_i)_{i \in I'}$ be a family of points in $X$.
Fix $i' \in I'$ and set $I \coloneqq I' \setminus \{i'\}$ so that $|I| = n$.
Then, by the inductive hypothesis, we can find a point $x' \in X$ such that
\begin{equation}
\label{e:x'-xi}
(x', x_i) \in W^{(\Omega_i)} \quad \text{for all $i \in I$}.
\end{equation}
  Set $\Omega_1 \coloneqq \cup_{i \in I} \Omega_i$ and $\Omega_2 \coloneqq \Omega_{i'}$ as
well as $x_1 \coloneqq x'$ and $x_2 \coloneqq x_{i'}$. 
Then $\Omega_1 \cap \Lambda\Omega_2 = \varnothing$ so that we can find $x \in X$ such 
that $(x, x_k) \in W^{(\Omega_k)}$ for all  $k \in \{1,2\}$, 
that is,
\begin{equation}
\label{e:x-x'}
(x, x') \in W^{(\Omega_i)} \quad \text{for all  $i \in I$}
\end{equation}
and
\begin{equation}
\label{e:x-x-i'}
(x, x_{i'}) \in W^{(\Omega_{i'})}.
\end{equation}
Since $W^{(\Omega_i)}$ is an equivalence entourage, 
we deduce from~\eqref{e:x'-xi} and~\eqref{e:x-x'}  that
$(x, x_i) \in W^{(\Omega_i)}$ for all  $i \in I$. 
This, together with~\eqref{e:x-x-i'}, 
yields $(x, x_i) \in W^{(\Omega_i)} \subset U^{(\Omega_i)}$ for all   $i \in I'$.
This completes the inductive argument and shows the implication (WSP') $\implies$ (WSP).
\end{proof}

Let $G$ be a group and let $A$ be a set.
Recall the following definitions \cite[Section~3]{csc-myhill-monatsh}.
Given a finite subset $\Delta \subset G$,
one says
 that a subset  $X \subset A^G$ is
\emph{$\Delta$-irreducible} if it satisfies the following condition:
\begin{enumerate}[{\rm (SI)}]
\item 
if $\Omega_1$ and $\Omega_2$ are two finite subsets of $G$ such that $\Omega_1 \cap \Omega_2 \Delta = \varnothing$, then,
given any two configurations $x_1, x_2 \in X$, there exists a configuration $x \in X$
such that $x\vert_{\Omega_k} = x_k\vert_{\Omega_k}$ for all $k \in \{1,2\}$.
\end{enumerate}
One says that a subset $X \subset A^G$ is
\emph{strongly irreducible} if there exists a finite subset $\Delta \subset G$ such that $X$ is $\Delta$-irreducible.

\begin{proposition}
\label{p:uni-str-irr-vs-wsp}
Let $A$ be a uniform space and let $G$ be a group. 
Equip $A^G$ with the product uniform structure and the shift action of $G$. 
Let $X \subset A^G$ be a strongly irreducible $G$-invariant subset.
Then the uniform dynamical system $(X,G)$  has the weak specification property.
In particular, the full shift $(A^G,G)$ has the weak specification property.
\end{proposition}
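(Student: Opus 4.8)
The plan is to deduce weak specification from a simultaneous-pasting property of strongly irreducible subsets, matching the combinatorial ``$\Delta$-apartness'' of that property with the ``$\Lambda$-apartness'' in condition (WSP) by a direct translation computation.

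Fix a finite subset $\Delta\subset G$ witnessing the strong irreducibility of $X$ and put $\Delta':=\Delta\cup\Delta^{-1}\cup\{1_G\}$. Since $\Omega_2\Delta\subset\Omega_2\Delta'$ for every $\Omega_2\subset G$, the subset $X$ is again $\Delta'$-irreducible, and now $\Delta'$ is symmetric and contains $1_G$. The first step is to upgrade (SI) to the following statement, proved by induction on $n$: \emph{if $S_1,\dots,S_n$ are finite subsets of $G$ with $S_i\cap S_j\Delta'=\varnothing$ for all distinct $i,j$, then for any $x_1,\dots,x_n\in X$ there exists $x\in X$ with $x\vert_{S_i}=x_i\vert_{S_i}$ for every $i$.} In the inductive step one first invokes the inductive hypothesis to obtain $y\in X$ agreeing with $x_i$ on $S_i$ for $i\le n$, and then applies (SI) to the two finite subsets $S:=S_1\cup\dots\cup S_n$ and $S_{n+1}$, using that $S\cap S_{n+1}\Delta'=\bigcup_{i\le n}(S_i\cap S_{n+1}\Delta')=\varnothing$; the demand is consistent because $1_G\in\Delta'$ makes the $S_i$ pairwise disjoint.

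Next, recall that the sets $W(\Phi,V)\cap(X\times X)$, with $\Phi$ a finite subset of $G$ and $V$ an entourage of $A$, form a base of entourages of $X$, and that a specification subset for an entourage $U'$ of $X$ is also a specification subset for any larger entourage $U\supset U'$ (because $(U')^{(\Omega)}\subset U^{(\Omega)}$ for every $\Omega\subset G$). Hence it suffices to exhibit a specification subset for each entourage $W(\Phi,V)\cap(X\times X)$. A short computation from the shift action $gx(h)=x(g^{-1}h)$ gives, for any finite $\Omega\subset G$,
\[
W(\Phi,V)^{(\Omega)}=\bigl\{(x,y)\in A^G\times A^G:\ (x(k),y(k))\in V\ \text{ for all }\ k\in\Omega^{-1}\Phi\bigr\},
\]
so that whenever $x,y\in X$ satisfy $x\vert_{\Omega^{-1}\Phi}=y\vert_{\Omega^{-1}\Phi}$, reflexivity of $V$ and $G$-invariance of $X$ give $(x,y)\in\bigl(W(\Phi,V)\cap(X\times X)\bigr)^{(\Omega)}$.

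I then claim that $\Lambda:=\Phi\,\Delta'\,\Phi^{-1}$, which is a finite subset of $G$, is a specification subset for $\bigl(X,G,W(\Phi,V)\cap(X\times X)\bigr)$. Let $(\Omega_i)_{i\in I}$ be a finite family of finite subsets of $G$ with $\Omega_i\cap\Lambda\Omega_j=\varnothing$ for all distinct $i,j$, let $(x_i)_{i\in I}$ be points of $X$, and set $S_i:=\Omega_i^{-1}\Phi$. If $S_i\cap S_j\Delta'\ne\varnothing$ for some distinct $i,j$, then $g_i^{-1}\phi=g_j^{-1}\psi\delta$ for suitable $g_i\in\Omega_i$, $g_j\in\Omega_j$, $\phi,\psi\in\Phi$, $\delta\in\Delta'$; taking inverses gives $g_i=\phi\,\delta^{-1}\psi^{-1}g_j\in\Phi\Delta'\Phi^{-1}\Omega_j=\Lambda\Omega_j$ (since $\Delta'$ is symmetric), contradicting $\Omega_i\cap\Lambda\Omega_j=\varnothing$. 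Thus the $S_i$ are pairwise $\Delta'$-apart, so the pasting statement yields $x\in X$ with $x\vert_{S_i}=x_i\vert_{S_i}$ for all $i$, and therefore $(x,x_i)\in\bigl(W(\Phi,V)\cap(X\times X)\bigr)^{(\Omega_i)}$ for all $i$, as required; this proves that $(X,G)$ has the weak specification property. Finally, the full shift $A^G$ is $\{1_G\}$-irreducible (given disjoint finite $\Omega_1,\Omega_2$ and $x_1,x_2\in A^G$, take $x$ equal to $x_1$ on $\Omega_1$, to $x_2$ on $\Omega_2$, and arbitrary elsewhere), hence strongly irreducible and $G$-invariant, so the last assertion follows. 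The only point requiring care is this translation book-keeping: identifying $\Omega^{-1}\Phi$ as the region on which two configurations must agree in order to be $W(\Phi,V)^{(\Omega)}$-close, and verifying that the choice $\Lambda=\Phi\Delta'\Phi^{-1}$ converts the hypothesis ``$\Omega_i\cap\Lambda\Omega_j=\varnothing$'' exactly into the ``$\Delta'$-apartness'' of the sets $S_i=\Omega_i^{-1}\Phi$ needed to apply the pasting statement.
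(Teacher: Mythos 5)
Your proof is correct and follows essentially the same route as the paper: reduce to the base entourages $W(\Phi,V)\cap(X\times X)$, take $\Lambda$ of the form $\Phi\,\Delta\,\Phi^{-1}$ (up to symmetrizing $\Delta$), and check that $\Lambda$-apartness of the $\Omega_i$ translates into $\Delta$-apartness of the sets $\Omega_i^{-1}\Phi$ so that the finitely-many-pieces version of strong irreducibility applies. The only difference is that you spell out the induction establishing the $n$-fold pasting, which the paper dismisses as ``an immediate inductive argument.''
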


\begin{proof}
Let $\Delta \subset G$ be a finite subset such that $X$ is $\Delta$-irreducible.
Fix an entourage $U$ of $X$. Then we can find an entourage $V$ of $A$ and a finite subset $\Omega \subset G$ such that the entourage
\[
W = W(X,V,\Omega) \coloneqq \{(x,y) \in X \times X: (x(g), y(g)) \in V \text{ for all } g \in \Omega\}
\]
is contained in $U$.
We claim that the finite subset $\Lambda  \coloneqq \Omega \Delta^{-1} \Omega^{-1} \subset G$ is a specification subset for $(X,G,U)$.
To see this, let $(\Omega_i)_{i \in I}$ be a finite family of finite subsets of $G$ such that $\Omega_j \cap \Lambda \Omega_k = \varnothing$ for all distinct $j,k \in I$ and let $(x_i)_{i \in I}$ be a family of points in $X$.
Setting $\Omega_i' \coloneqq \Omega_i^{-1}\Omega \subset G$ for all $i \in I$,
we have that $\Omega_j' \cap \Omega_k' \Delta = \varnothing$ for all distinct $j,k \in I$. Since $X$ is 
$\Delta$-irreducible,
using an immediate inductive argument on $|I|$,
we can find a configuration $x \in X$ such that $x\vert_{\Omega_i'} = x_i\vert_{\Omega_i'}$ for all $i \in I$.
This implies that  
$(gx)\vert_{\Omega} = (gx_i)\vert_{\Omega}$ for all $g \in \Omega_i$ and $i \in I$.
It follows that $(x,x_i) \in W^{(\Omega_i)} \subset U^{(\Omega_i)}$ for all $i \in I$.
This proves our claim and shows that $(X,G)$ has the weak specification property.
\end{proof}

The following result extends Proposition~A.1 in~\cite{li-goe}.

\begin{proposition}
\label{p:wsp-strong-irred}
Let $G$ be a group and let $A$ be a set.
Equip $A^G$ with the prodiscrete  uniform structure and the shift action of $G$. 
Let $X \subset A^G$ be a $G$-invariant subset. 
Then the following conditions are equivalent:
\begin{enumerate}[{\rm (a)}]
\item $X$ is strongly irreducible;
\item the $G$-shift action on $X$ has the weak specification property.
\end{enumerate}
\end{proposition}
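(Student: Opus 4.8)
The plan is to deduce both implications from results already at hand, the only genuine content being a short dictionary between entourages of the prodiscrete structure and finite windows of configurations. For the implication (a) $\implies$ (b) nothing new is needed: equip $A$ with its discrete uniform structure, so that the product uniform structure on $A^G$ is the prodiscrete one; then a strongly irreducible $G$-invariant subset $X \subset A^G$ has the weak specification property for the $G$-shift by Proposition~\ref{p:uni-str-irr-vs-wsp}.

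For (b) $\implies$ (a), here is the plan. Since $A$ is discrete, $A^G$ is ultrauniform, hence so is $X$ with the induced uniform structure, and the $G$-shift on $X$ is uniformly continuous; by Proposition~\ref{p:wsp-ultrauniform}, (WSP) is equivalent to the two-piece condition (WSP'), which we use. Apply (WSP') to the entourage $U_0 \coloneqq W(\{1_G\}) = \{(x,y) \in X \times X : x(1_G) = y(1_G)\}$, obtaining a finite subset $\Lambda \subset G$ with the corresponding splicing property. I claim that $X$ is $\Delta$-irreducible with $\Delta \coloneqq \Lambda^{-1}$. The one computation needed is that, for every finite $\Omega \subset G$, $(x,y) \in U_0^{(\Omega)}$ if and only if $x\vert_{\Omega^{-1}} = y\vert_{\Omega^{-1}}$: indeed $(x,y) \in g^{-1}U_0$ means $(gx)(1_G) = (gy)(1_G)$, i.e. $x(g^{-1}) = y(g^{-1})$, and one intersects over $g \in \Omega$. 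Now let $\Theta_1, \Theta_2 \subset G$ be finite with $\Theta_1 \cap \Theta_2 \Delta = \varnothing$ and let $x_1, x_2 \in X$; set $\Omega_k \coloneqq \Theta_k^{-1}$. Taking inverses of sets, $\Theta_1 \cap \Theta_2 \Lambda^{-1} = \varnothing$ is exactly $\Omega_1 \cap \Lambda\Omega_2 = \varnothing$, so (WSP') produces $x \in X$ with $(x,x_k) \in U_0^{(\Omega_k)}$, that is $x\vert_{\Theta_k} = x_k\vert_{\Theta_k}$, for $k \in \{1,2\}$. This is condition (SI) for $\Delta$, so $X$ is $\Delta$-irreducible and therefore strongly irreducible.

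I expect the only delicate point to be the bookkeeping of inverses: the identity $U_0^{(\Omega)} = W(\Omega^{-1})$ twists left translations into right translations, which is precisely why the specification subset $\Lambda$ yields the parameter $\Delta = \Lambda^{-1}$ rather than $\Delta = \Lambda$, and why one must apply (WSP') to the ``one-point'' entourage $W(\{1_G\})$ rather than to a larger window. One could avoid Proposition~\ref{p:wsp-ultrauniform} altogether by invoking (WSP) directly with $I = \{1,2\}$ and taking $\Delta \coloneqq \Lambda \cup \Lambda^{-1}$, which simultaneously forces both disjointness conditions $\Omega_1 \cap \Lambda\Omega_2 = \varnothing$ and $\Omega_2 \cap \Lambda\Omega_1 = \varnothing$ required there; the rest of the argument is then identical.
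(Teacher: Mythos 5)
Your proof is correct and follows essentially the same route as the paper's: (a) $\Rightarrow$ (b) is Proposition~\ref{p:uni-str-irr-vs-wsp} with $A$ discrete, and (b) $\Rightarrow$ (a) applies specification to the entourage $W(\{1_G\})$ and tracks the inversion $U_0^{(\Omega)} = W(\Omega^{-1})$, which is exactly how the paper produces $\Delta = \Lambda^{-1}$. If anything, your explicit detour through (WSP') via Proposition~\ref{p:wsp-ultrauniform} (or the alternative $\Delta = \Lambda \cup \Lambda^{-1}$) is slightly more careful than the paper's own argument, which verifies only the one-sided disjointness $\Omega_1^{-1} \cap \Lambda\Omega_2^{-1} = \varnothing$ before invoking the specification subset, whose definition in (WSP) nominally demands the condition for both ordered pairs.
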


\begin{proof}
The implication (a) $\implies$ (b) follows from Proposition~\ref{p:uni-str-irr-vs-wsp} after equipping $A$ with the discrete uniform structure.
\par
 Conversely, suppose (b). Let $U = W(1_G) \coloneqq \{(x,y) \in X \times
X: x(1_G) = y(1_G)\}$
and let $\Lambda = \Lambda(U) \subset G$ be a specification subset for $(X,G,U)$.
Let us show that $X$ is $\Delta$-irreducible with $\Delta \coloneqq \Lambda^{-1} \subset G$.
Let $\Omega_1, \Omega_2 \subset G$ such that $\Omega_1 \cap \Omega_2 \Delta = \varnothing$
and let $x_1$ and $x_2$ be two configurations in $X$. As $\Omega_1^{-1} \cap \Lambda \Omega_2^{-1} = \varnothing$ and $\Lambda$ is a  specification subset for $(X,G,U)$, we can find 
a configuration $x \in X$ such that 
$(x, x_k)\in U^{(\Omega_k^{-1})}$ for all
 $k \in \{1,2\}$. 
This implies  $x(g_k) = (g_k^{-1}x)(1_G) = (g_k^{-1}x_k)(1_G) =
x_k(g_k)$ for all $g_k \in \Omega_k$ and $k \in \{1,2\}$. 
It follows that $x\vert_{\Omega_k} = x_k\vert_{\Omega_k}$ for all $k \in \{1,2\}$.
This shows that $X$ is $\Delta$-irreducible. The implication (b) $\implies$ (a) follows.
\end{proof}

The following result extends Corollary~5.2 in ~\cite{li-goe}. 

\begin{theorem}
\label{t:entropie-positive}
Let $X$ be a compact Hausdorff space equipped with a continuous action of an amenable group $G$.
Suppose that $X$ has more than one point  and that the action of $G$ on $X$ has the weak specification property.
Then one has $\htop(X,G) > 0$.
\end{theorem}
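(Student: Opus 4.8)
The plan is to exhibit, for a suitable entourage, an exponential lower bound on the number of $(F_j,U)$-separated points, and then invoke the characterization $\htop(X,G) = \hsep(X,G)$ from Theorem~\ref{t:top-ent-uni}. Since $X$ has more than one point and is Hausdorff, we can pick two distinct points $a_0, a_1 \in X$ and a closed symmetric entourage $U$ of $X$ such that $(a_0,a_1) \notin U \circ U$; in particular $U[a_0]$ and $U[a_1]$ are disjoint. Let $\Lambda = \Lambda(U) \subset G$ be a specification subset for $(X,G,U)$ as in Definition~\ref{d:weak-specification}. The idea is that for any finite subset $S \subset G$ which is \emph{$\Lambda$-separated} (meaning $s \notin \Lambda s'$, i.e. $s' \notin \Lambda^{-1} s$, for distinct $s,s' \in S$), and any function $\varphi \colon S \to \{0,1\}$, weak specification applied to the singletons $\Omega_s = \{s\}$ and the points $x_s = a_{\varphi(s)}$ produces a point $x_\varphi \in X$ with $s x_\varphi \in U[s a_{\varphi(s)}] = s \, U[a_{\varphi(s)}]$ for all $s \in S$; equivalently $(x_\varphi)$ at ``coordinate'' $s$ lands in the corresponding half. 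Distinct $\varphi$'s then give $(S,U)$-separated points, because if $\varphi(s) \neq \psi(s)$ then $s x_\varphi \in sU[a_0]$ and $s x_\psi \in sU[a_1]$ are forced apart. This yields $\sep(X,G,S,U) \geq 2^{|S|}$.

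Next I would control the density of a maximal $\Lambda$-separated subset of a F\o lner set. Fix a left F\o lner net $\FF = (F_j)_{j \in J}$. Inside each $F_j$, greedily choose a maximal subset $S_j \subset F_j$ that is $\Lambda$-separated; then every element of $F_j$ lies in $\Lambda^{-1} s$ for some $s \in S_j$ (together with $s$ itself), so $F_j \subset \bigcup_{s \in S_j} (\Lambda^{-1} \cup \{1_G\}) s$, giving $|F_j| \leq (|\Lambda| + 1)\,|S_j|$, i.e. $|S_j| \geq |F_j| / (|\Lambda|+1)$. Combining with the previous paragraph,
\[
\sep(X,G,F_j,U) \geq \sep(X,G,S_j,U) \geq 2^{|S_j|} \geq 2^{|F_j|/(|\Lambda|+1)},
\]
where the first inequality uses that an $(S_j,U)$-separated set with $S_j \subset F_j$ is also $(F_j,U)$-separated (since $U^{(F_j)} \subset U^{(S_j)}$). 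Taking logarithms and dividing by $|F_j|$,
\[
\frac{\log \sep(X,G,F_j,U)}{|F_j|} \geq \frac{\log 2}{|\Lambda|+1}
\]
for all $j$, hence $\hsep(X,G,\FF,U) \geq \dfrac{\log 2}{|\Lambda|+1} > 0$. Therefore $\htop(X,G) = \hsep(X,G) \geq \hsep(X,G,\FF,U) > 0$ by Theorem~\ref{t:top-ent-uni}, which is the desired conclusion.

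The main obstacle is the bookkeeping around the definition of weak specification: one must make sure the singleton sets $\Omega_s = \{s\}$ genuinely satisfy the disjointness hypothesis $\Omega_j \cap \Lambda \Omega_k = \varnothing$ for distinct indices, which is exactly the $\Lambda$-separatedness of $S$, and one must correctly translate $(x,x_i) \in U^{(\Omega_i)}$ into the statement $s x_\varphi \in sU[a_{\varphi(s)}]$ using $G$-equivariance of the diagonal action together with the fact that $U$ is an entourage invariant under this action. A secondary subtlety is choosing $U$ so that $U[a_0] \cap U[a_1] = \varnothing$ — using $U \circ U$ disjointness of the points and symmetry of $U$ handles this cleanly, and closedness of $U$ is not even needed here (though it costs nothing to assume it). Everything else is routine counting with F\o lner sets.
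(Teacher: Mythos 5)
Your overall strategy is the right one (and is essentially the paper's: embed $2^{|S_j|}$ separated points via specification over a $\Lambda$-separated subset $S_j\subset F_j$ of density $\geq 1/|\Lambda|$, then use $\htop=\hsep$). But there is a genuine gap at the heart of the separation argument. You apply (WSP) to the points $x_s = a_{\varphi(s)}$, which yields $(x_\varphi, a_{\varphi(s)})\in U^{(\{s\})}=s^{-1}U$, i.e.\ $(s x_\varphi, s a_{\varphi(s)})\in U$, and you then rewrite $U[s a_{\varphi(s)}]$ as $s\,U[a_{\varphi(s)}]$ on the grounds that ``$U$ is an entourage invariant under this action.'' Entourages are \emph{not} invariant under the diagonal action: uniform continuity only guarantees that $g^{-1}U$ is again an entourage, not that $g^{-1}U=U$. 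Without that invariance, your separation test for $\varphi(s)\neq\psi(s)$ reduces to needing $(s a_0, s a_1)\notin U\circ U\circ U$ for every $s$ ranging over arbitrarily large F\o lner sets, and nothing in the hypotheses prevents the orbit points $s a_0$ and $s a_1$ from becoming $U$-close for some $s$ (the action is not assumed expansive, and you cannot shrink $U$ to handle infinitely many $s$ at once). The correct fix is to feed the \emph{translated} points $x_s \coloneqq s^{-1}a_{\varphi(s)}$ into (WSP): then the conclusion $(x_\varphi, s^{-1}a_{\varphi(s)})\in s^{-1}U$ reads $(s x_\varphi, a_{\varphi(s)})\in U$, with the fixed targets $a_0,a_1$, and separation follows. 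This is exactly what the paper does.

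Two smaller points. First, even after that fix, disjointness of $U[a_0]$ and $U[a_1]$ (which is what $(a_0,a_1)\notin U\circ U$ buys you) only shows the points $x_\varphi$ are pairwise distinct, not $(S_j,U)$-separated: if $\varphi(s)\neq\psi(s)$ and $(s x_\varphi, s x_\psi)\in U$, the chain $a_0\to s x_\varphi\to s x_\psi\to a_1$ only produces $(a_0,a_1)\in U\circ U\circ U$. So you must choose $U$ symmetric with $(a_0,a_1)\notin U\circ U\circ U$, as in the paper. Second, in the density estimate, maximality of $S_j$ gives $F_j\subset\bigcup_{s\in S_j}(\Lambda\cup\Lambda^{-1}\cup\{1_G\})s$ (both orders of the pair must be excluded), so the correct constant is $2|\Lambda|+1$ rather than $|\Lambda|+1$ — harmless for positivity, but worth getting right; alternatively enlarge $\Lambda$ so that $\Lambda=\Lambda^{-1}$ and $1_G\in\Lambda$ at the outset.
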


\begin{proof}
Let $x_1$ and $x_2$ be two distinct points in $X$. 
Then we can find a symmetric entourage $U$ of $X$ such that
\begin{equation}
\label{e:x-neq-y-et-U}
(x_1,x_2) \notin U \circ U \circ U.
\end{equation}
Let $\Lambda \subset G$ be a specification subset for $(X,G,U)$.
Up to enlarging $\Lambda$ if necessary, we can assume that $\Lambda = \Lambda^{-1}$.
\par
Let $\FF = (F_j)_{j \in J}$ be a left F\o lner net for $G$.
Given $j \in J$, let $\Delta_j \subset F_j$ be a maximal subset subject to the condition
that 
\begin{equation}
\label{e:Delta-j-Lambda-sep}
g \notin  \Lambda h \text{  for all distinct $g,h \in \Delta_j$}.
\end{equation}
By maximality of $\Delta_j$ and the fact that $\Lambda = \Lambda^{-1}$, we have $F_j \subset \Lambda \Delta_j$. This implies
$|F_j| \leq | \Lambda \Delta_j| \leq |\Lambda| \cdot |\Delta_j|$ and hence
\begin{equation}
\label{e:ratio-Delta-j-F_j}
\frac{|\Delta_j|}{| F_j|} \geq \frac{1}{| \Lambda |} \quad \text{for all $j \in J$}.
\end{equation}
\par
As $\Lambda$ is a specification subset for $(X,G,U)$ and $\Delta_j$ satisfies~\eqref{e:Delta-j-Lambda-sep},
for every $x \in \{x_1,x_2\}^{\Delta_j}$,
we can find   $z = z(x) \in X$ such that 
\begin{equation}
\label{e:l'orbite}
(g z, x(g)) = (g z, g(g^{-1} x(g))) \in U \text{  for all $g \in \Delta_j$}.
\end{equation}
Comparing \eqref{e:l'orbite} and \eqref{e:x-neq-y-et-U},
 we deduce that the set $Z_j \coloneqq \{z(x) : x \in \{x_1,x_2\}^{\Delta_j}\} \subset X$ is
$(\Delta_j,U)$-separated and has cardinality
\begin{equation}
\label{e:card-Z-j-bo}
|Z_j| = \vert \{x_1,x_2\}^{\Delta_j} \vert = 2^{|\Delta_j|}.
\end{equation}
 Since $\Delta_j \subset F_j$, the set $Z_j$ is also 
 $(F_j,U)$-separated, so that
\begin{equation}
\label{e:F-j-separated}
\sep(X,G,F_j,U) \geq |Z_j|.
\end{equation}
We conclude that
\begin{align*}
\htop(X,G) & = \hsep(X,G) && \mbox{(by \eqref{e:htop-equals-huni})}\\
& \geq \hsep(X,G,\FF,U) && \text{(by \eqref{e:def-hsep-X-G})} \\  
&= \limsup_{j \in J} \frac{\log \sep(X,G,F_j,U)}{|F_j|}
&& \text{(by~\eqref{d:hsep-U})} \\
& \geq \limsup_{j \in J} \frac{\log |Z_j|}{|F_j|}
&& \text{(by~\eqref{e:F-j-separated})} \\ 
& \geq \limsup_{j \in J} \frac{|\Delta_j| \log 2}{|F_j|} 
&& \text{(by~\eqref{e:card-Z-j-bo})} \\
&\geq \frac{\log 2}{|\Lambda|}  && \text{(by \eqref{e:ratio-Delta-j-F_j})}. 
\end{align*}
This implies $\htop(X,G) > 0$.
\end{proof}

\section{Proof of the main result}
\label{sec:proof-main-result}

The following result extends Proposition~3.1 in~\cite{li-goe}.

\begin{theorem}
\label{t:entropie-sousespace-propre}
Let $X$ be a compact Hausdorff space equipped with a continuous action of an amenable group $G$
and let $Y \subsetneqq X$ be a proper closed $G$-invariant subset of $X$. 
 Suppose that the action of $G$ on $X$ has the weak specification property
 and that the action of $G$ on $Y$ is expansive. 
Then one has  $\htop(Y,G) < \htop(X,G)$.
\end{theorem}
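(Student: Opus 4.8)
The plan is to establish the sharper, quantitative inequality
\[
\htop(X,G) \ \geq\ \htop(Y,G) + \tfrac{1}{|\Lambda|}\log\!\bigl(1 + M^{-|\Lambda|}\bigr)
\]
for suitable integers $|\Lambda|\geq 1$ and $M\geq 1$ fixed below; since $\htop(Y,G)<\infty$ by Corollary~\ref{c:ent-expansive-positive} and the correction term is strictly positive, this gives $\htop(Y,G)<\htop(X,G)$. Fix a left F\o lner net $\FF=(F_j)_{j\in J}$ for $G$ and a point $x_0\in X\setminus Y$; as $Y$ is closed there is an entourage $W$ of $X$ with $W[x_0]\cap Y=\varnothing$. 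Fix a closed expansiveness entourage $U_0$ for $(Y,G)$ and a closed symmetric entourage $V_0$ of $Y$ with $V_0\circ V_0\subseteq U_0$; then $V_0\circ V_0$ is again a closed expansiveness entourage for $(Y,G)$, so by Theorem~\ref{t:entropy-expansive}, writing $C_j\coloneqq\cov(Y,G,F_j,V_0\circ V_0)$ and $M\coloneqq\cov(Y,G,\{1_G\},V_0\circ V_0)\geq 1$, we have $\htop(Y,G)=\hcov(Y,G,V_0\circ V_0)=\lim_j\frac{\log C_j}{|F_j|}$. Extend $V_0$ to an entourage $W_0$ of $X$ with $W_0\cap(Y\times Y)=V_0$, choose a symmetric entourage $V$ of $X$ with $V\circ V\circ V\subseteq W\cap W_0$, and, using the weak specification property of $(X,G)$, fix a specification subset $\Lambda$ for $(X,G,V)$; after enlarging $\Lambda$ we may assume $1_G\in\Lambda=\Lambda^{-1}$.

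For each $j\in J$, choose $\Delta_j\subseteq F_j$ maximal with $g\notin\Lambda g'$ for all distinct $g,g'\in\Delta_j$; by maximality and $\Lambda=\Lambda^{-1}$ one has $F_j\subseteq\Lambda\Delta_j$, hence $|\Delta_j|\geq|F_j|/|\Lambda|$. For each subset $S\subseteq\Delta_j$ choose a maximal $(F_j\setminus\Lambda S,V_0)$-separated subset $Z_j^{(S)}\subseteq Y$, so $|Z_j^{(S)}|=\sep(Y,G,F_j\setminus\Lambda S,V_0)$. Given $S\subseteq\Delta_j$ and $z\in Z_j^{(S)}$, apply condition (WSP) from Definition~\ref{d:weak-specification} to the finite family indexed by $\{\ast\}\cup S$ (with $\ast\notin\Delta_j$) consisting of $\Omega_\ast\coloneqq F_j\setminus\Lambda S$ with target point $z$, together with $\Omega_g\coloneqq\{g\}$ with target point $g^{-1}x_0$ for $g\in S$: using that $\Delta_j$ is $\Lambda$-separated and $\Lambda g\subseteq\Lambda S$ for $g\in S$, one verifies $\Omega_i\cap\Lambda\Omega_k=\varnothing$ for all distinct indices $i,k$, so (WSP) produces $x_{j,S,z}\in X$ with $(g\,x_{j,S,z},gz)\in V$ for all $g\in F_j\setminus\Lambda S$ and $(g\,x_{j,S,z},x_0)\in V$ for all $g\in S$.

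The next step is to show that $\XX_j\coloneqq\{x_{j,S,z}:S\subseteq\Delta_j,\ z\in Z_j^{(S)}\}$ is $(F_j,V)$-separated and that $(S,z)\mapsto x_{j,S,z}$ is injective. The key point is that $S$ is recovered from $x\coloneqq x_{j,S,z}$ by $S=\{g\in\Delta_j:(gx,x_0)\in V\}$: for $g\in S$ this holds by construction, while for $g\in\Delta_j\setminus S$ the $\Lambda$-separation of $\Delta_j$ gives $g\notin\Lambda S$, so $(gx,gz)\in V$ with $gz\in Y$, and if also $(gx,x_0)\in V$ then $(gz,x_0)\in V\circ V\subseteq W$, contradicting $W[x_0]\cap Y=\varnothing$. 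Now take $(S,z)\neq(S',z')$. If $S\neq S'$, pick $g_0$ in the symmetric difference; the recovery formula together with $V\circ V\circ V\subseteq W$ forces $(g_0x_{j,S,z},g_0x_{j,S',z'})\notin V$. If $S=S'$ and $z\neq z'$, the $(F_j\setminus\Lambda S,V_0)$-separation of $Z_j^{(S)}$ provides $g_0\in F_j\setminus\Lambda S$ with $(g_0z,g_0z')\notin V_0$; since $g_0z,g_0z'\in Y$ and $x_{j,S,z},x_{j,S,z'}$ are $V$-close at $g_0$ to $z,z'$ respectively, the inclusion $(V\circ V\circ V)\cap(Y\times Y)\subseteq W_0\cap(Y\times Y)=V_0$ again forces $(g_0x_{j,S,z},g_0x_{j,S,z'})\notin V$. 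Consequently $\sep(X,G,F_j,V)\geq|\XX_j|=\sum_{S\subseteq\Delta_j}\sep(Y,G,F_j\setminus\Lambda S,V_0)$.

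Finally, combining Lemma~\ref{l:inegalites-entre-cov-sep-spa}, Lemma~\ref{l:cov-sub-multipli} and the right-invariance of $\cov$ (Lemma~\ref{l:sep-spa-cov-right-inv}) gives, for each $S\subseteq\Delta_j$,
\[
\sep(Y,G,F_j\setminus\Lambda S,V_0)\ \geq\ \cov(Y,G,F_j\setminus\Lambda S,V_0\circ V_0)\ \geq\ \frac{C_j}{\cov(Y,G,F_j\cap\Lambda S,V_0\circ V_0)}\ \geq\ \frac{C_j}{M^{|\Lambda|\,|S|}},
\]
the last inequality because $|F_j\cap\Lambda S|\leq|\Lambda|\,|S|$. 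Summing over $S\subseteq\Delta_j$ and using $\sum_{S\subseteq\Delta_j}M^{-|\Lambda|\,|S|}=(1+M^{-|\Lambda|})^{|\Delta_j|}$ yields
\[
\sep(X,G,F_j,V)\ \geq\ C_j\,(1+M^{-|\Lambda|})^{|\Delta_j|}\ \geq\ C_j\,(1+M^{-|\Lambda|})^{|F_j|/|\Lambda|}.
\]
Taking $\log$, dividing by $|F_j|$, passing to $\limsup_j$, and using $\lim_j\frac{\log C_j}{|F_j|}=\htop(Y,G)$ together with Theorem~\ref{t:top-ent-uni} (whence $\htop(X,G)=\hsep(X,G)\geq\hsep(X,G,\FF,V)$) gives the claimed quantitative bound. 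I expect the real obstacle to be precisely the loss of control over the orbit of $z$ on the set $\Lambda S$, which may have density near $1$ in $F_j$: the device of letting the base set $Z_j^{(S)}$ depend on $S$ — separated only over $F_j\setminus\Lambda S$ — and recovering the resulting entropy deficit through the sub-multiplicativity of $\cov$ and the exact binomial identity is what makes the argument yield a genuine (though possibly tiny) entropy gain rather than just $\htop(Y,G)\leq\htop(X,G)$.
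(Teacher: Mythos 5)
Your argument is correct, and its skeleton is the same as the paper's: fix $x_0\in X\setminus Y$ far from $Y$, take a symmetric specification subset $\Lambda$, extract a maximal $\Lambda$-separated $\Delta_j\subset F_j$ with $|\Delta_j|\geq |F_j|/|\Lambda|$, and for each $S\subset\Delta_j$ use (WSP) to graft copies of $x_0$ at the elements of $S$ onto points of a maximal separated subset of $Y$ over $F_j\setminus\Lambda S$; the recovery of $S$ from proximity to $x_0$ and the two-case separation analysis are exactly the paper's, as is the binomial summation producing the factor $(1+\varepsilon)^{|\Delta_j|}$. The one genuine divergence is the intermediate lower bound on $\sep(Y,G,F_j\setminus\Lambda S,\cdot)$: the paper gets it by an injection into a product of spanning sets (a minimal $(F_j\setminus\Lambda A,V)$-spanning set times $(\Lambda a,V)$-spanning sets for $a\in A$), yielding \eqref{e:sep-V-circV-X-ge-sep-Y} and the gap $\tfrac{1}{|\Lambda|}\log\bigl(1+\sep(Y,G,\Lambda,V)^{-1}\bigr)$, whereas you use the chain $\sep(\cdot,V_0)\geq\cov(\cdot,V_0\circ V_0)$ from Lemma~\ref{l:inegalites-entre-cov-sep-spa} together with the submultiplicativity and right-invariance of $\cov$ (Lemmas~\ref{l:cov-sub-multipli} and~\ref{l:sep-spa-cov-right-inv}) and the Ornstein--Weiss limit for $\hcov$, yielding the (slightly weaker, but equally sufficient) gap $\tfrac{1}{|\Lambda|}\log\bigl(1+M^{-|\Lambda|}\bigr)$ with $M=\cov(Y,G,\{1_G\},V_0\circ V_0)$. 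Your bookkeeping with an entourage $V_0$ of $Y$ extended to $W_0$ on $X$, instead of the paper's nested entourages $U\subset V\subset W\cap U_0$ of $X$, is only a cosmetic difference; in both versions the strictness of the final inequality rests on $\htop(Y,G)<\infty$, which you correctly secure via Corollary~\ref{c:ent-expansive-positive} (the paper gets it through Theorem~\ref{t:entropy-expansive} applied to the closed expansiveness entourage $V\circ V\circ V\circ V$).
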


\begin{proof}
To simplify, if $U$ is an entourage of $X$, we shall also write $U$ to denote  the entourage of $Y$  obtained by intersecting $U$  with $Y \times Y$.
\par 
Fix  a point $x_0 \in X \setminus Y$. 
Since  $X$ is compact Hausdorff  and therefore regular, we can find an entourage $W$ of $X$ 
such that $W[x_0]$ does not meet  $Y$, that is, $(x_0,y) \notin W$ for all $y \in Y$.
\par
Let $U_0 \subset X \times X$ be an expansiveness entourage for the action of $G$ on $Y$ and 
take  a closed symmetric entourage $V$ of $X$ such that 
$V \circ V \circ V \circ V \subset  W \cap U_0$.
Choose also  a symmetric entourage $U$ of $X$ such that
$U \circ U \circ U \subset V$. 
\par 
Let  $\Lambda  \subset G$ be a specification subset for $(X,G,U)$.
Up to enlarging $\Lambda$ if necessary, we can assume that $\Lambda = \Lambda^{-1}$ and $1_G \in \Lambda$.
\par
Let $\FF = (F_j)_{j \in J}$ be a left F\o lner net for $G$. 
Given $j \in J$, let $\Delta_j \subset F_j$ be a maximal subset subject to the condition
that $g \notin  \Lambda h$ for all distinct $g,h \in \Delta_j$. 
By maximality of $\Delta_j$ and the fact that $\Lambda = \Lambda^{-1}$ and $1_G \in \Lambda$, we have 
$F_j \subset \Lambda \Delta_j$. This implies
$|F_j| \leq | \Lambda \Delta_j| \leq |\Lambda| \cdot |\Delta_j|$ and hence
\begin{equation}
\label{e:Delta-j-vs-F_j}
\frac{|\Delta_j|}{| F_j|} \geq \frac{1}{| \Lambda |} \quad \text{for all $j \in J$}.
\end{equation}
\par
Fix now $j \in J$ and  $A \subset \Delta_j$. 
Take a minimal   $(F_j \setminus (\Lambda A), V)$-spanning subset $Y_A \subset Y$  
for $Y$, so that
\begin{equation}
\label{e:card-Y-A}
| Y_A| = \spa(Y,G,F_j \setminus (\Lambda A), V).
\end{equation} 
Choose also,  for each $a \in A$, 
a  minimal 
$(\Lambda a, V)$-spanning subset $Z_a \subset Y$ for $Y$, 
so that
\begin{equation}
\label{e:card-Z-a} 
|Z_a| = \spa(Y,G,\Lambda a, V) = \spa(Y,G,\Lambda, V),
\end{equation}
where the last equality follows from~\eqref{e:sep-spa-cov-right-inv}.
\par
Consider  now a point $y \in Y$.
Since $Y_A$ is $(F_j \setminus (\Lambda A), V)$-spanning for $Y$,  
we can find  a point $y_A = y_A(y) \in Y_A$ such that $(g y, g y_A) \in V$ 
for all $g \in F_j \setminus (\Lambda A)$.
On the other hand, since $Z_a$ is $(\Lambda a, V)$-spanning for $Y$,
we can find,  
for each $a \in A$,   a point  $z_a = z_a(y) \in Z_a$ such that $(g y, g z_a) \in V$ for all 
$g \in \Lambda a$.
Now, if $y,y' \in Y$ satisfy  $y_A(y) = y_A(y')$ and $z_a(y) = z_a(y')$ for all $a  \in A$, then 
$(gy, g y') \in V \circ V$ for all $g \in F_j$.
Therefore the map 
\[
y \mapsto \left(y_A(y), (z_a(y))_{a \in A})\right) \in Y_A \times \prod_{a \in A} Z_a
\]  
is injective  on each 
$(F_j, V \circ V)$-separated subset of $Y$.
By taking cardinalities: we deduce that
\begin{align*}
\sep(Y,G,F_j, V \circ V) 
& \leq |Y_A |  \cdot \prod_{a  \in A} | Z_a |\\
& = \spa(Y,G,F_j \setminus (\Lambda A), V) \cdot \spa(Y,G,\Lambda, V)^{|A|}
&&\text{(by \eqref{e:card-Y-A} and \eqref{e:card-Z-a})} \\
&\leq
\sep(Y,G,F_j \setminus (\Lambda A), V) \cdot \sep(Y,G,\Lambda, V)^{|A|}
&& \text{(by~\eqref{e:relations-spa-sep-cov})}.
\end{align*}
This gives us
\begin{equation}
\label{e:sep-V-circV-X-ge-sep-Y}
\sep(Y,G,F_j \setminus (\Lambda A), V) \geq \sep(Y,G,F_j, V \circ V) \cdot \sep(Y,G,\Lambda, V)^{-|A|}.  
\end{equation}
Let now $S_A \subset Y$ be an $(F_j \setminus (\Lambda A), V)$-separated subset of $Y$ with maximal cardinality, so that
\begin{equation}
\label{e:card-S-A-sep}
|S_A| = \sep(Y,G,F_j \setminus (\Lambda A), V).
\end{equation} 
Let $y \in S_A$. 
Take as an index set   $I \coloneqq A \cup \{i_0\}$, where $i_0$ is an index element not in $A$,
and consider the family 
$(\Omega_i)_{i \in I}$ of finite subsets of $F_j$  defined by  
$\Omega_a \coloneqq \{a\}$ for all $a \in A$ and $\Omega_{i_0} \coloneqq F_j \setminus \Lambda A$. 
Observe that $\Omega_i \cap \Lambda \Omega_k = \varnothing$ for all distinct $i,k \in I$. Indeed, $\{a\} \cap \Lambda \{a'\} = \varnothing$
for all distinct $a,a' \in A \subset \Delta_j$, while, obviously, $F_j \setminus (\Lambda A) \cap \Lambda \{a\} = \varnothing$ for all $a \in A$.
Thus, if we consider the family of points $(x_i)_{i \in I}$ in $X$ defined by $x_a \coloneqq a^{-1}x_0$ for all $a \in A$ and
$x_{i_0} \coloneqq y$ 
(the given point in $S_A$), 
condition (WSP) ensures the existence of a point $x = x(y,A) \in X$ such that
$(g_i x,g_i x_i) \in U$ for all $g_i \in \Omega_i$ and $i \in I$, 
that is, $(a x, x_0) = (ax,a(a^{-1}x_0)) \in U$ for all $a \in A$ and
$(g x,g y) \in U$ for all $g \in F_j \setminus (\Lambda A)$.
\par
Now, if $y, y' \in S_A$ are distinct, we can find a group element $g \in F_j \setminus (\Lambda A)$ such that
$(g y,g y') \notin V$.
Setting $x = x(y,A)$ and $x' = x(y',A)$,
this implies $(gx,gx') \notin U$.
Indeed, otherwise,
from $(gy, gx), (gx, gx'), (gx', gy') \in U$ we would deduce
\[
(g y,g y') \in U \circ U \circ U \subset V \mbox{ \ for all } g \in F_j \setminus (\Lambda A),
\]
a contradiction.
\par
Moreover, for $A, B \subset \Delta_j$ distinct, $y \in S_A$ and $y' \in S_B$, and for $c \in A \triangle B$,
where $A \triangle B \coloneqq (A  \setminus  B) \cup (B \setminus  A)$ denotes 
the symmetric difference set of $A$ and $B$, 
we have $(cx(y,A), cx(y',B)) \notin U$. 
Otherwise, if $c =:a \in A \setminus B$ (so that $a \in F_j \setminus \Lambda B$)
from $(x_0, ax), (ax, ax'), (ax', ay')  \in U$, we would deduce
\[
(x_0, ay') \in U \circ U \circ U   \subset V  \subset W,
\]
a contradiction since $ay' \in Y$.
Similarly, assuming  $c =:b \in B \setminus A$ 
we would get $(x_0, by) \in W$, again a contradiction since $by \in Y$.
\par
It follows that the set $\{x(y,A) :\;  y \in S_A, A \subset \Delta_j\} \subset X$ is 
$(F_j, U)$-separated with cardinality
$\sum_{A \subset \Delta_j} |S_A| = \sum_{A \subset \Delta_j} \sep(Y,G,F_j\setminus (\Lambda A), V)$. 
We deduce that
\begin{align*}
\label{e:h-X-vs-h-Y}
\sep(X,G,F_j, U) & \geq \sum_{A \subset \Delta_j} \sep(Y,G,F_j\setminus (\Lambda A), V)\\
& \geq \sum_{A \subset \Delta_j} \sep(Y,G,F_j, V \circ V) \cdot \sep(Y,G,\Lambda, V)^{-|A |}
&& \text{(by \eqref{e:sep-V-circV-X-ge-sep-Y})} \\
&= \sep(Y,G,F_j, V \circ V) \cdot \sum_{A \subset \Delta_j} \sep(Y,G,\Lambda, V)^{-|A |} \\
&= \sep(Y,G,F_j, V \circ V) \left(1 + \sep(Y,G,\Lambda, V)^{-1} \right)^{|\Delta_j|}. 
\end{align*}
This gives us  
\begin{align*}
\frac{\log \sep(X,G,F_j, U)}{|F_j|} 
&\geq 
\frac{\log \sep(Y,G,F_j, V \circ V)}{|F_j|} 
+ \frac{|\Delta_j|}{|F_j|} \log \left(1 + \sep(Y,G,\Lambda, V)^{-1}\right) \\ 
& \geq 
\frac{\log \sep(Y,G,F_j, V \circ V)}{|F_j|} 
+ \frac{1}{| \Lambda|} \log \left(1 + \sep(Y,G,\Lambda, V)^{-1}\right), 
\end{align*}
where the second inequality follows from~\eqref{e:Delta-j-vs-F_j}.
 \par
Finally, we obtain 
\begin{align*}
\htop(X,G) & = \hsep(X,G) 
&& \text{(by~\eqref{e:htop-equals-huni})} \\ 
 &\geq \hsep(X,G,\FF, U) 
 &&\text{(by~\eqref{e:def-hsep-X-G})} \\ 
&=   \limsup_{j \in J} \frac{\log \sep(X,G,F_j, U)}{|F_j|} \\
& \geq  \limsup_{j \in J} \frac{\log \sep(Y,G,F_j, V \circ V)}{|F_j|} +  
\frac{1}{|\Lambda|} \log \left(1 + \sep(Y,G,\Lambda, V)^{-1}\right) \\
& =
\hsep(Y,G,\FF,V \circ V) 
+ \frac{1}{|\Lambda|} \log \left(1 + \sep(Y,G,\Lambda, V)^{-1} \right) \\
&\geq
\hcov(Y,G,\FF,V \circ V \circ V \circ V) 
+ \frac{1}{|\Lambda|} \log \left(1 + \sep(Y,G,\Lambda, V)^{-1} \right)
&& \text{(by~\eqref{e:relations-spa-sep-cov-h})} \\
 & =
\htop(Y,G) 
+ \frac{1}{|\Lambda|} \log \left(1 + \sep(Y,G,\Lambda, V)^{-1} \right),
\end{align*}
where the last equality
 follows from Theorem~\ref{t:entropy-expansive},
since $V \circ V \circ V \circ V \subset U_0$
is a closed expansiveness entourage for $(Y,G)$. 
 This shows that $\htop(X,G) > \htop(Y,G)$.  
\end{proof} 

The following result extends Proposition~3.2 in~\cite{li-goe}.

\begin{theorem}
\label{t:entropie-diminue-failure-preinjectivite}
Let $X$ and $Y$ be  compact Hausdorff spaces 
equipped with   expansive continuous actions of an amenable group   $G$. 
Suppose that the action of $G$ on $X$ has the weak specification property
and that $\htop(Y,G) < \htop(X,G)$.  
Let   $f \colon X \to Y$ be a 
continuous $G$-equivariant map.
Then $f$ is not pre-injective.
In fact, 
the restriction of $f$ to any homoclinicity class of $X$ fails to be  injective.
\end{theorem}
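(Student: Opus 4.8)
The plan is to argue by contradiction. Suppose there is a homoclinicity class $\HH$ of $X$ such that $f\vert_\HH$ is injective, and fix a point $x_* \in \HH$; I will deduce $\htop(X,G) \leq \htop(Y,G)$, contradicting the hypothesis. First I would fix the relevant entourages. Using expansiveness of $X$, choose a closed symmetric entourage $U_0$ of $X$ such that $U_0 \circ U_0$ is an expansiveness entourage for $(X,G)$, so that $\htop(X,G) = \hsep(X,G,\FF,U_0)$ by Corollary~\ref{c:sep-spa-expansiveness}. Using expansiveness of $Y$, fix a closed symmetric expansiveness entourage $E_0$ for $(Y,G)$ and a symmetric entourage $E_1$ of $Y$ with $E_1 \circ E_1 \subset E_0$. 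Since $f$ is uniformly continuous (being continuous on a compact Hausdorff space), pick a closed symmetric entourage $U$ of $X$ with $(f \times f)(U) \subset E_1$ and $U \circ U \circ U \subset U_0$. Let $\Lambda$ be a specification subset for $(X,G,U)$; enlarging it, we may assume $\Lambda = \Lambda^{-1}$ and $1_G \in \Lambda$. Fix a left F\o lner net $\FF = (F_j)_{j \in J}$.

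The heart of the proof is a transplantation of separated sets into $\HH$. Fix $j \in J$ and let $Z_j \subset X$ be a maximal $(F_j,U_0)$-separated subset, so that $\lvert Z_j\rvert = \sep(X,G,F_j,U_0)$. For each $z \in Z_j$, I apply Proposition~\ref{p:wspec-infinite} to the two subsets $\Omega_1 \coloneqq F_j$ and $\Omega_2 \coloneqq G \setminus \Lambda F_j$ — which satisfy $\Omega_1 \cap \Lambda\Omega_2 = \Omega_2 \cap \Lambda\Omega_1 = \varnothing$ because $\Lambda$ is symmetric — and to the points $x_1 \coloneqq z$ and $x_2 \coloneqq x_*$, obtaining a point $\tilde z \in X$ with $(g\tilde z, gz) \in U$ for all $g \in F_j$ and $(g\tilde z, gx_*) \in U$ for all $g \in G \setminus \Lambda F_j$. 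Since $\Lambda F_j$ is finite and $U \subset U_0 \subset U_0 \circ U_0$, Proposition~\ref{p:homoclinic-equiv} applied with the closed expansiveness entourage $U_0 \circ U_0$ gives that $\tilde z$ is homoclinic to $x_*$, hence $\tilde z \in \HH$. Moreover, if $z \neq z'$ in $Z_j$, choose $g \in F_j$ with $(gz,gz') \notin U_0$; were $(g\tilde z, g\tilde z') \in U$, we would get $(gz,gz') \in U \circ U \circ U \subset U_0$, a contradiction, so $g\tilde z \neq g\tilde z'$. Thus $z \mapsto \tilde z$ is injective and $\tilde Z_j \coloneqq \{\tilde z : z \in Z_j\} \subset \HH$ has cardinality $\sep(X,G,F_j,U_0)$.

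Next I would use injectivity of $f$ on $\HH$ together with expansiveness of $Y$. Since $\tilde Z_j \subset \HH$, the set $f(\tilde Z_j)$ has cardinality $\lvert \tilde Z_j\rvert = \sep(X,G,F_j,U_0)$. For distinct $y = f(\tilde z)$ and $y' = f(\tilde z')$ in $f(\tilde Z_j)$, $G$-equivariance of $f$ yields $(gy, gf(x_*)) \in (f\times f)(U) \subset E_1$ for all $g \in G \setminus \Lambda F_j$, hence $(gy,gy') \in E_1 \circ E_1 \subset E_0$ for all such $g$; as $y \neq y'$, expansiveness of $Y$ supplies some $g_0 \in \Lambda F_j$ with $(g_0 y, g_0 y') \notin E_0$. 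Thus $f(\tilde Z_j)$ is $(\Lambda F_j, E_0)$-separated, so
\[
\sep(X,G,F_j,U_0) \;=\; \lvert f(\tilde Z_j)\rvert \;\leq\; \sep(Y,G,\Lambda F_j, E_0) \qquad \text{for all } j \in J.
\]
Taking logarithms, dividing by $\lvert F_j\rvert$, and passing to the $\limsup$, I would invoke Lemma~\ref{l:finite-times-folner} (so that $(\Lambda F_j)_{j}$ is a left F\o lner net and $\lvert \Lambda F_j\rvert / \lvert F_j\rvert \to 1$) together with Corollary~\ref{c:sep-spa-expansiveness} and Theorem~\ref{t:top-ent-uni} to obtain
\[
\htop(X,G) = \hsep(X,G,\FF,U_0) \leq \limsup_{j \in J} \frac{\log \sep(Y,G,\Lambda F_j, E_0)}{\lvert \Lambda F_j\rvert} \leq \hsep(Y,G) = \htop(Y,G),
\]
which is the desired contradiction. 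Finally, since $\HH$ was an arbitrary homoclinicity class, $f$ restricted to every homoclinicity class fails to be injective; in particular $f$ is not pre-injective.

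I expect the main obstacle to be the transplantation step: arranging, via the infinite weak specification property, that the whole family $\tilde Z_j$ lands inside the single homoclinicity class $\HH$ while remaining separated over $F_j$. This is exactly where the expansiveness of $X$ (to promote an ``almost everywhere $U$-close to $x_*$'' estimate to genuine homoclinicity), the weak specification of $X$, and the careful margin between the entourages (one loses a threefold composition when comparing $\tilde z$ with $z$, so $U$ must sit well inside $U_0$) are all used simultaneously; the replacement of $F_j$ by $\Lambda F_j$ in the final entropy estimate is a minor technical point handled by Lemma~\ref{l:finite-times-folner}.
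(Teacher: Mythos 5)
Your proof is correct, but it takes a genuinely different route from the paper's. The paper argues directly rather than by contradiction: it compares a maximal $(F_j,U\circ U)$-separated set in $X$ with a minimal $(F_j,V)$-spanning set in $Y$, uses the entropy gap to fix a margin $\eta>0$ with $\sep(X,G,F_j,U\circ U)\geq \spa(Y,G,F_j,V)\exp(\eta|F_j|)$, and then applies the pigeon-hole principle twice --- once over the spanning set to find many points whose images are $V$-close to a common point over $F_j$, and once against $\bigl(\cov(Y,G,\{1_G\},V)\bigr)^{|\Lambda F_j\setminus F_j|}$ --- to produce, after the same specification-based transplantation into the homoclinicity class of a fixed point, two distinct homoclinic points with equal images. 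You instead assume $f\vert_\HH$ injective for some class $\HH$, transplant a maximal $(F_j,U_0)$-separated set of $X$ into $\HH$ via Proposition~\ref{p:wspec-infinite} (applied to $\Omega_1=F_j$ and $\Omega_2=G\setminus\Lambda F_j$), and observe that its $f$-image must be $(\Lambda F_j,E_0)$-separated in $Y$, since any two distinct image points are $E_0$-close off $\Lambda F_j$ and expansiveness of $Y$ then forces them apart somewhere inside $\Lambda F_j$. This yields $\sep(X,G,F_j,U_0)\leq\sep(Y,G,\Lambda F_j,E_0)$ for every $j$, hence $\htop(X,G)\leq\htop(Y,G)$ via Corollary~\ref{c:sep-spa-expansiveness}, Lemma~\ref{l:finite-times-folner}, and Theorem~\ref{t:top-ent-uni} --- the desired contradiction. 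Your version dispenses with both pigeon-hole steps, the explicit $\eta$, and any use of spanning sets or covers in $Y$; the only trade-off is that it proves the contrapositive instead of exhibiting an explicit pair of homoclinic points with the same image, which is all the theorem requires. The entourage bookkeeping ($U$ closed symmetric with $U\circ U\circ U\subset U_0$, $U_0\circ U_0$ an expansiveness entourage so that $\htop(X,G)=\hsep(X,G,\FF,U_0)$, and $(f\times f)(U)\subset E_1$ with $E_1\circ E_1\subset E_0$) and the replacement of $F_j$ by the F\o lner net $(\Lambda F_j)_{j\in J}$ in the final limit all check out.
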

  
\begin{proof}
Let $\FF = (F_j)_{j \in J}$ be a left F\o lner net for $G$.
Let $U_0$ (resp.~$V_0$) be an 
 expansiveness entourage for $(X,G)$ (resp.~$(Y,G)$). 
Choose    a closed symmetric entourage $V$ of $Y$ such that $V \circ V \circ V \circ V  \subset V_0$.
Since $f$ is uniformly continuous, 
we can find a closed symmetric entourage $U$ of $X$ such that
$U \circ U \circ U \circ U \subset U_0$ and
\begin{equation}
\label{e:f-unif-cont-U-V}
(f \times f)(U) \subset V.
\end{equation}
Fix a point  $x \in X$.
Let $\Lambda \subset G$ be a specification subset for $(X,G,U)$. It is not restrictive to suppose that $\Lambda = \Lambda^{-1}$.
\par
Since  $U \circ U$ is a closed symmetric entourage of $X$ and
$U \circ U \circ U \circ U$ is an expansiveness entourage for $(X,G)$,  we have  
\begin{align*}
\htop(X,G)
&= \hsep(X,G,\FF,U \circ U)
&& \text{(by Corollary~\ref{c:sep-spa-expansiveness})} \\
&= \limsup_{j \in J}\frac{\log \sep(X,G,F_j,U \circ U)}{|F_j|}
&&\text{(by~\eqref{d:hsep-U})}.
\end{align*}
On the other hand, We have
\begin{align*}
\htop(Y,G)
&= \hspa(Y,G)
&& \text{(by~ Theorem~\ref{t:top-ent-uni})} \\
 & \geq  \hspa(Y,G,\FF,V)
 && \text{(by~\eqref{d:hspa})} \\
 & = \limsup_{j \in J}\frac{\log \spa(Y,G,F_j,V)}{|F_j|}
  &&\text{(by~\eqref{d:hspa-U})}.
\end{align*}
Fixing some constant     $\eta > 0$ such that $3 \eta < \htop(X,G) - \htop(Y,G)$,
we deduce that 
for all  $j_0 \in J$, there exists $j \in J$ with $j \geq j_0$ such that
\[
\frac{\log \sep(X,G,F_j,U \circ U)}{|F_j|} \geq \htop(X,G) - \eta,
\text{  and  }
\frac{\log \spa(Y,G,F_j,V)}{|F_j|} \leq \htop(X,G) - 2\eta,
\]
so that
\begin{equation}
\label{e:sep-exp-spa}
\sep(X,G,F_j,U \circ U) \geq \spa(Y,G,F_j,V) \cdot \exp(\eta|F_j|). 
\end{equation}
By virtue of Assertion~(i) in Lemma~\ref{l:finite-times-folner},
 we can furthermore assume  that
\[
\frac{|\Lambda F_j \setminus F_j|}{|F_j|} <  \frac{\eta}{\log \cov(Y,G,\{1_G\},V)},
\]
so that
\begin{equation}
\label{e:cov-V-Y-F-j-eta}
(\cov(Y,G,\{1_G\},V))^{|\Lambda F_j \setminus F_j|} < \exp(\eta|F_j|).
\end{equation}
\par
Let now $Z_j \subset X$ be an $(F_j,U \circ U)$-separated subset with maximal cardinality, so that
\begin{equation}
\label{e:card-Z-j-sep}
|Z_j| = \sep(X,G,F_j,U \circ U) ,
\end{equation}
and let $S_j \subset Y$ be an $(F_j,V)$-spanning subset with minimal cardinality, so that
\begin{equation}
\label{e:card-S-j-spa}
|S_j| = \spa(Y,G,F_j,V).
\end{equation}
The fact that  $S_j$ is $(F_j,V)$-spanning, 
implies that, for each  $z \in Z_j$, we can find $s \in S_j$ such that
$(f(z),s) \in V^{(F_j)}$.
By~\eqref{e:sep-exp-spa}, \eqref{e:card-Z-j-sep}, and \eqref{e:card-S-j-spa},  
we have
\[
|Z_j| \geq  |S_j| \cdot \exp(\eta|F_j|).
\]
Consequently,
it follows  from  the pigeon-hole principle that 
there exists  a subset $\widetilde{Z}_j \subset Z_j$ such that
\begin{align}
\label{e:lower-Z-tilde-j}
 |\widetilde{Z}_j| \geq \exp(\eta |F_j|)
 \end{align}
  and an element
$s_0 \in S_j$ such that
 all $z \in \widetilde{Z}_j$ satisfy
\begin{equation}
\label{e:f-z-s-0-Fj-V}
(f(z),s_0) \in V^{(F_j)}.
\end{equation}
\par
Since $\Lambda$ is a specification subset for $(X,G,U)$ such that $\Lambda = \Lambda^{-1}$, so that
$F_j \cap \Lambda(G \setminus \Lambda F_j) = \varnothing$,
it follows from Proposition~\ref{p:wspec-infinite} that
for each $z \in \widetilde{Z}_j$, we can find $z' \in X$ satisfying 
\begin{equation}
\label{e:z-z'-F-j}
(z',z) \in U^{(F_j)}
\end{equation}
and
\begin{equation}
\label{e:z'-x-out-Lambda-F-j}
(z',x) \in U^{(G \setminus \Lambda F_j)}.
\end{equation}
As $U$ is a closed expansiveness entourage for $(X,G)$, 
we deduce from  Proposition~\ref{p:homoclinic-equiv} and~\eqref{e:z'-x-out-Lambda-F-j}  
that  $z'$ is homoclinic to  $x$.
\par 
On the other hand,
it follows from~\eqref{e:z-z'-F-j}, \eqref{e:f-unif-cont-U-V}, and the $G$-equivariance of $f$ that
\begin{equation}
\label{e:f-z-z'-F-j}
(f(z'),f(z)) \in V^{(F_j)}.
\end{equation}
Similarly, \eqref{e:z'-x-out-Lambda-F-j}, \eqref{e:f-unif-cont-U-V}, and the $G$-equivariance of $f$
imply that
\begin{equation}
\label{e:f-z'-x-out-Lambda-F-j}
(f(z'),f(x)) \in V^{(G \setminus \Lambda F_j)}.
\end{equation}
\par
Now, as 
\[
|\widetilde{Z}_j| > (\cov(Y,G,\{1_G\},V))^{|\Lambda F_j \setminus F_j|}
\]
by~\eqref{e:lower-Z-tilde-j} and~\eqref{e:cov-V-Y-F-j-eta},
it follows again from the pigeon-hole principle that  we can find two distinct points 
$z_1,z_2 \in \widetilde{Z}_j$ such that
\begin{equation}
\label{e:f-z1-z2-Lambda-Fj-minus-Fj}
(f(z_1'),f(z_2')) \in V^{(\Lambda F_j \setminus F_j)}
\subset (V \circ V \circ V \circ V)^{(\Lambda F_j \setminus F_j)}.
\end{equation}
By~\eqref{e:f-z-z'-F-j} and~\eqref{e:f-z-s-0-Fj-V}, 
we  have
\begin{equation}
\label{e:f-z1'-z2'-V-Fj}
(f(z_1'),f(z_2')) \in (V \circ V \circ V \circ V)^{(F_j)}.
\end{equation}
On the other hand, by using~\eqref{e:f-z'-x-out-Lambda-F-j}, 
we get
\begin{equation}
\label{e:f-z1'-z2'-G-setminus-Fj}
(f(z_1'),f(z_2')) \in (V \circ V)^{(G \setminus \Lambda F_j)}
\subset (V \circ V \circ V \circ V)^{(G \setminus \Lambda F_j)}.
\end{equation}
Combining~\eqref{e:f-z1-z2-Lambda-Fj-minus-Fj}, \eqref{e:f-z1'-z2'-V-Fj},
and~\eqref{e:f-z1'-z2'-G-setminus-Fj}, 
we obtain
\[
(f(z_1'),f(z_2')) \in (V \circ V \circ V \circ V)^{(G)},
\]
which implies $f(z_1') = f(z_2')$ since $V \circ V \circ V \circ V$ is an expansiveness entourage 
for $(Y,G)$.
As $z_1'$ and $z_2'$ are in the homoclinicity class of $x$,
it remains only to show that the points $z_1'$ and $z_2'$ are distinct.
But this is clear since otherwise
\eqref{e:z-z'-F-j}
would then imply $(z_1,z_2) \in (U \circ U)^{(F_j)}$, a contradiction since  $z_1$ and $z_2$ are distinct points in $\widetilde{Z}_j \subset Z_j$ and
$Z_j$ is  $(F_j,U \circ U)$-separated.
\end{proof}

Combining the two previous theorems, we get the following.

\begin{theorem}
\label{t:pre-inj-same-htop}
Let $X$ and $Y$ be compact Hausdorff spaces 
equipped with expansive continuous actions of an amenable group $G$. 
Suppose that the actions of $G$ on $X$ and $Y$ have the weak specification property
and that $\htop(X,G) =\htop(Y,G)$.  
Then every pre-injective continuous $G$-equivariant map
$f \colon X \to Y$ is surjective.
\qed
\end{theorem}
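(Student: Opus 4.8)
The plan is to obtain Theorem~\ref{t:pre-inj-same-htop} by combining Theorem~\ref{t:entropie-sousespace-propre} and Theorem~\ref{t:entropie-diminue-failure-preinjectivite}, arguing by contradiction. So I would start by assuming that $f \colon X \to Y$ is a pre-injective continuous $G$-equivariant map which is \emph{not} surjective, and I would look at its image $Z \coloneqq f(X) \subset Y$. Since $X$ is compact and $f$ is continuous, $Z$ is closed in $Y$; since $f$ is $G$-equivariant, $Z$ is $G$-invariant; and since $f$ is not surjective, $Z \subsetneqq Y$ is a proper subset. Being a closed subspace of the compact Hausdorff space $Y$, the set $Z$ is itself compact Hausdorff, the restricted action of $G$ on $Z$ is continuous, and it is expansive by Example~\ref{ex:sub-expansive} (as $(Y,G)$ is expansive).

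Next I would apply Theorem~\ref{t:entropie-sousespace-propre} to the pair $(Y,Z)$: the action of $G$ on $Y$ has the weak specification property, $Z \subsetneqq Y$ is a proper closed $G$-invariant subset, and the action of $G$ on $Z$ is expansive. The theorem then yields $\htop(Z,G) < \htop(Y,G)$, and the hypothesis $\htop(X,G) = \htop(Y,G)$ gives $\htop(Z,G) < \htop(X,G)$.

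Then I would view $f$ as a continuous $G$-equivariant map $f \colon X \to Z$, which it is by the very definition of $Z$. The spaces $X$ and $Z$ are both compact Hausdorff, equipped with expansive continuous $G$-actions; the action of $G$ on $X$ has the weak specification property; and $\htop(Z,G) < \htop(X,G)$. Hence Theorem~\ref{t:entropie-diminue-failure-preinjectivite} applies and tells us that $f \colon X \to Z$ is not pre-injective. Since pre-injectivity concerns only the behaviour of $f$ on the homoclinicity classes of the source $X$, and these classes are unaffected by whether we regard $f$ as mapping into $Y$ or into its subspace $Z$ (the restriction of $f$ to a homoclinicity class of $X$ is injective as a map into $Z$ exactly when it is injective as a map into $Y$), it follows that $f \colon X \to Y$ is not pre-injective either. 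This contradicts our standing assumption, and therefore $f$ must be surjective.

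The argument is essentially formal once the two quoted theorems are in hand, so I do not expect a genuine obstacle; the one point deserving a line of care is the transition in the last paragraph, namely checking that the pair $(X,Z)$ really does satisfy all hypotheses of Theorem~\ref{t:entropie-diminue-failure-preinjectivite}—in particular that $Z$ is compact Hausdorff with an expansive continuous action, which is exactly what the closedness and $G$-invariance of $Z \subset Y$ provide—and that ``pre-injective'' is interpreted consistently for $f$ with codomain $Y$ and with codomain $Z$.
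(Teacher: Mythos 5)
Your proposal is correct and is exactly the argument the paper intends when it says the theorem follows by ``combining the two previous theorems'': pass to the image $Z=f(X)$, use Theorem~\ref{t:entropie-sousespace-propre} on the pair $(Y,Z)$ to get a strict entropy drop, then apply Theorem~\ref{t:entropie-diminue-failure-preinjectivite} to $f\colon X\to Z$ to contradict pre-injectivity. No gaps; your closing remark about the consistency of ``pre-injective'' for codomain $Y$ versus $Z$ is the right (and only) point needing a word of care.
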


\begin{proof}[Proof of Theorem~\ref{t:myhill}]
This is  Theorem~\ref{t:pre-inj-same-htop} with $X = Y$.
\end{proof}

When $G$ is a group, $A$ and $B$ are finite sets,  $X \subset A^G$ and $Y \subset B^G$ are subshifts, a continuous $G$-equivariant map 
$f \colon X \to Y$ is also  called a \emph{cellular automaton} 
(this terminology is widely used among computer scientists, see e.g.~\cite{book}).

\begin{corollary}
Let $G$ be an amenable  group,  $A$ and $B$ be  finite sets,
and  $X \subset A^G$ and $Y \subset B^G$ be subshifts.
Suppose that $X$ and $Y$ are strongly irreducible and $\htop(X,G) = \htop(Y,G)$.
Then every pre-injective cellular automaton $f \colon X \to Y$ is surjective. 
\end{corollary}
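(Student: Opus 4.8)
The plan is to obtain this corollary as an immediate specialization of Theorem~\ref{t:pre-inj-same-htop}, so the work consists entirely in checking that the hypotheses of that theorem are satisfied. First I would observe that, since $A$ is a finite set, it is compact Hausdorff for its discrete topology, so the full shift $A^G = \prod_{g \in G} A$ is compact Hausdorff by Tychonoff's theorem; being closed in $A^G$, the subshift $X$ is therefore compact Hausdorff, and likewise $Y \subset B^G$ is compact Hausdorff. Next, Example~\ref{ex:subshifts-expansive} shows that the shift action of $G$ on any subshift is expansive, so both $(X,G)$ and $(Y,G)$ are expansive continuous dynamical systems of an amenable group $G$.

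The remaining structural hypothesis is the weak specification property. By assumption $X$ and $Y$ are strongly irreducible subshifts over the discrete alphabets $A$ and $B$, so the implication (a) $\implies$ (b) of Proposition~\ref{p:wsp-strong-irred} (which itself relies on Proposition~\ref{p:uni-str-irr-vs-wsp}) tells us that the shift actions of $G$ on $X$ and on $Y$ both have the weak specification property. Combining this with the assumption $\htop(X,G) = \htop(Y,G)$, every hypothesis of Theorem~\ref{t:pre-inj-same-htop} is now in force.

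Applying Theorem~\ref{t:pre-inj-same-htop} then yields that every pre-injective continuous $G$-equivariant map $f \colon X \to Y$ is surjective. Since, by definition, a cellular automaton $f \colon X \to Y$ between the subshifts is exactly a continuous $G$-equivariant map, this is precisely the statement of the corollary. I do not expect a genuine obstacle here: the content is entirely contained in the earlier theorems, and the only point requiring minimal care is the dictionary between the terminology used in the corollary (``cellular automaton'' $=$ continuous $G$-equivariant map, ``strongly irreducible'' $\Rightarrow$ ``weak specification property'') and the hypotheses of Theorem~\ref{t:pre-inj-same-htop}.
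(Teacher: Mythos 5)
Your proposal is correct and follows exactly the route of the paper: the paper's own proof invokes Example~\ref{ex:subshifts-expansive} for expansiveness, Proposition~\ref{p:wsp-strong-irred} for the weak specification property, and then applies Theorem~\ref{t:pre-inj-same-htop}. Your additional remarks on compactness of $X$ and $Y$ and on the dictionary ``cellular automaton'' $=$ continuous $G$-equivariant map are accurate and merely make explicit what the paper leaves implicit.
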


\begin{proof}
This is an immediate consequence of Theorem~\ref{t:pre-inj-same-htop}
since $(X,G)$ and $(Y,G)$ are expansive (cf.~Example~\ref{ex:subshifts-expansive})
and have the weak specification property by Proposition~\ref{p:wsp-strong-irred}.
\end{proof}

\bibliographystyle{siam}
\def\cprime{$'$}

\end{document}